\title[Singular C-S Model on the real line]{Complete classification of the asymptotical behavior for singular C-S Model on the real line}
\author[Zhang]{Xiongtao Zhang}
\address[Xiongtao Zhang]{\newline Center for Mathematical Sciences, Huazhong University of Science and Technology, Wuhan, China}
\email{xtzhang@hust.edu.cn}
\author[Zhu]{Tingting Zhu}
\address[Tingting Zhu]{\newline School of Mathematics and Statistics, Huazhong University of Science and Technology, Wuhan, China }
\email{ttzhu@hust.edu.cn}
\newtheorem{theorem}{Theorem}[section]
\newtheorem{lemma}{Lemma}[section]
\newtheorem{corollary}{Corollary}[section]
\newtheorem{remark}{Remark}[section]
\newtheorem{definition}{Definition}[section]
\newcommand{\bbr}{\mathbb R}
\begin{document}
\date{\today}

\subjclass[2010]{70F99 \and 82C22}
\keywords{Cucker-Smale model, singular communication, uniqueness, cluster formation, critical exponent, asymptotical collective behavior}

\thanks{The work of X. Zhang is supported by the National Natural Science Foundation of China (Grant No. 11801194).}


\begin{abstract}
In this paper, we study the singular Cucker-Smale (C-S) model on the real line. For long range case, i.e. $\beta<1$, we prove the uniqueness of the solution in the sense of Definition \ref{D2.1} and the unconditional flocking emergence. Moreover, the sufficient and necessary condition for collision and sticking phenomenon will be provided. For short range case, i.e. $\beta>1$, we construct the uniform-in-time lower bound of the relative distance between particles and provide the sufficient and necessary condition for the emergence of multi-cluster formation. For critical case, i.e. $\beta=1$, we show the uniform lower bound of the relative distance and unconditional flocking emergence. These results provide a complete classification of the collective behavior for C-S model on the real line.
\end{abstract}
\maketitle \centerline{\date}

\section{Introduction} \label{sec:1}
\setcounter{equation}{0}
\vspace{0.3cm}
Collective behaviors of multi-agent systems are ubiquitous in the nature, like flocking of birds, synchronization of oscillators, herding of sheeps and alignment of robots \cite{D-M1, D-M2, D-M3, L-P-L-S, P-L-S-G-P, T-T, T-B, V-C-B-C-S, Wi}, etc. During recent decades, several different dynamical models were proposed to describe the collective behaviors, such as Kuramoto model \cite{Ku}, Winfree model \cite{Wi}, Vicsek model \cite{V-C-B-C-S}, Cucker-Smale (C-S) model \cite{C-S2} etc. These seminal works and models have been extensively studied and widely applied in various area such as engineering, biology, physics and social science communities. In order to study the emergence of flocking through a mean field interaction network, F. Cucker and S. Smale in their seminal work \cite{C-S2} provided the C-S model as follows,
\begin{equation}\label{A1}
\left\{\begin{aligned}
&\dot{x_i}=v_i,\quad t>0,\quad i=1,\cdots,N,\\
&\dot{v_i}=\frac{\kappa}{N}\sum_{j=1}^N\psi(\|x_i-x_j\|)(v_j-v_i),
\end{aligned}
\right.
\end{equation}
where $\kappa$ is a nonnegative coupling strength and $\psi$ is a communication weight measuring mutual interactions, which is assumed to satisfy a regularized inverse power law $ \psi(\|x_i-x_j\|)=(1+\|x_i-x_j\|^2)^{-\frac{\beta}{2}}$. Although it seems to be a very simple model, the phenomena related to this model are quite rich. In fact, a lot of analysis has been done to explain the emergence of collective behavior in the dynamics of the C-S model. To name a few, avoidance of collision \cite{A-C-H-L, C-D}, flocking in random environment \cite{A-H, C-M, H-L-L}, mean-field limit in deterministic and stochastic sense \cite{B-C-C, C-C-R}, various type of collective behaviors emergence \cite{B-H, C-D-P, C-F-R-T, C-K-M-T, C-S2, D-M1, D-M2, D-M3, F-H-T, H-H-K, H-L, H-S, H-T, She}, local flocking \cite{C-H-H-J-K1, C-H-H-J-K2, H-K-Z}, bonding force \cite{P-K-H}, generalized flocking \cite{M-T}, singular and hyperbolic limits \cite{P-S}, kinetic equation \cite{D-F-T}, application to flight formation \cite{P-E-G} and flocking with leaders \cite{L-X}, etc. Instead of the original regular communication setting, In the present paper, we will consider the C-S model with singular communication weight ,i.e.
\begin{equation}\label{A2}
\psi(\|x_i-x_j\|)=\frac{1}{(\|x_i-x_j\|^2)^{\frac{\beta}{2}}}.
\end{equation}
In fact, recently, the C-S model with singular interaction \eqref{A2} attracts a lot of attention from various area. This is mainly due to that the Coulomb type interaction will automatically generates the repulsion and leads to the avoidance of collision \cite{C-C-M-P}, which is more physical and very important for application in engineering such as formation of unmanned aerial vehicles. However, this singular communication weight causes a lot of difficulty in the mathematical analysis. For instance, the uniqueness of the solution to \eqref{A1} cannot be guaranteed by the fundamental theory of ODE, because the vector field on the right hand side of \eqref{A1} is no more Lipschitz. Therefore, comparing to the extensive study on regular communication weight, there are very few works concerning on the C-S model with singular interaction: flocking dynamics and mean-field limit \cite{H-L},  avoidance of collision \cite{A-C-H-L, C-C-M-P}, global existence of weak solutions in particle and kinetic level \cite{C-C-H, M-P, P, Pe2}. More precisely, in \cite{P}, the author constructed the global existence of weak solution without uniqueness. Meanwhile, the author in \cite{P} found the finite time flocking phenomenon but only in two particles system. In \cite{C-C-H, C-C-M-P}, the authors proved the collision avoidance for C-S model in any finite time, but the uniform-in-time lower bound is still unknown. Moreover, in \cite{H-P-Z, H-K-P-Z}, the authors studied the C-S model with regular short range interaction and constructed a sufficient and necessary condition for the emergence of mono-cluster and multi-cluster formation, respectively. While, it is not clear wether the similar results can be obtained in the singular case. Therefore, may we address three natural questions as follows, \newline

\begin{itemize}
\item (Q1) Can we derive the uniqueness of the solution to the C-S model \eqref{A1} with singular communication \eqref{A2}?
Moreover, can we find the sufficient and necessary condition for emergence of finite time flocking in $N$ particle system?
\vspace{0.2cm}
\item (Q2) Can we construct the uniform-in-time lower bound between two particles of the C-S model with singular interaction, so that the collision avoidance occurs when time tends to infinity and asymptotical equilibrium state can be constructed? 
\vspace{0.2cm}
\item (Q3) Can we obtain the sufficient and necessary condition for the emergence of mono-cluster and multi-cluster formation, and thus construct the critical coupling strength $\kappa_c$ for flocking emergence as in \cite{H-K-P-Z}? 
Moreover, can we obtain the information of cluster number and the asymptotical velocity of each cluster?
\vspace{0.2cm}
\end{itemize}

In order to answer the above three questions which arise from the previous works, we will study the C-S model \eqref{A1} with singular interaction in one dimensional case. The advantage of the one dimensional model is that it is equivalent to a first order model as in \cite{H-P-Z, H-K-P-Z}, for which we can obtain more delicate estimates and thus provide the answers to the above questions. More precisely, we will study the equivalent first order equation as follows,
\begin{align}
\left\{\begin{aligned} \label{A3}
& \dot{x}_i = \nu_i +  \frac{\kappa}{N} \sum_{k = 1}^{N} \Psi(x_k - x_i), \quad t > 0,~~i= 1, \cdots, N, \\
& \Psi(-x) = -\Psi(x), \quad x \in \bbr, \qquad 0 < \Psi^{\infty} = \lim_{t \to \infty} \Psi(x) < \infty,
\end{aligned}
\right.
\end{align}
where we call $\nu_i$ the natural velocity which depends on the initial data of the second order C-S model. Please see Section \ref{sec:2.1} for details of the derivation. So far, we cannot extend this analysis into higher dimension since the integrable structure does not exist in multi-dimensional case.\newline 

The main results in this paper will be interpreted in three-fold. First, for long range case $\beta<1$, we will show that the classical solution to the equation \eqref{A3} is unique, and thus prove the weak solution to \eqref{A3} defined  in \cite{P} is unique in one dimensional case. More over, we prove that the collision will occur only finite times and the finite time flocking will occur if and only if $\beta<1$ and the natural velocities are identical. Thus we conclude that there are infinite many initial data for second order C-S model which leads to finite time flocking, but these initial data are of measure zero in the phase space. This refines the results in \cite{P} (see Theorem \ref{T3.1} and Corollary \ref{C3.1} in Section \ref{sec:3} for details). Second, for short range case $\beta > 1$, we construct the uniform in time lower bound of the distance between two particles, which depends on the particle number $N$. This improves the results in \cite{C-C-M-P} (see Theorem \ref{TD1} in Section \ref{sec:4.1} for details). Finally, for short range case $\beta>1$, we provide the sufficient and necessary condition for emergence of mono-cluster and multi-cluster, and derive the critical coupling strength for mono-cluster flocking emergence. Meanwhile, we will also provide the algorithm to count the cluster number and calculate the asymptotical velocity of each cluster (see Theorem \ref{T4.2}, Theorem \ref{T4.3} and Corollary \ref{C4.1} in Section \ref{sec:4} for details).\newline

The rest of the paper is organized as follows. In Section \ref{sec:2}, we will provide some preliminary definitions and previous results which will be used in the later sections. In Section \ref{sec:3}, we will study the long range case $\beta<1$ and prove the uniqueness of the solution. Moreover, we will show the unconditional flocking and the asymptotical lower bound of the distance between two particles, which shows the existence of the equilibrium state. In Section \ref{sec:4}, we will consider the short range case $\beta>1$ and show the sufficient and necessary condition for emergence of multi-cluster formation. Then, we will extend the results to second order model and discuss the differences between the collective behavior of first and second order models. In Section \ref{sec:5}, we will discuss the differences between the regular C-S model and the singular one. Finally, Section \ref{sec:6} is devoted to a brief summary of our main results and discussion of possible future works. \newline

\section{Preliminary}\label{sec:2}
\setcounter{equation}{0}
\vspace{0.3cm}
In this section, we will firstly derive the system \eqref{A3} for the case $0<\beta<1$ and $\beta\geq 1$ respectively. Then we will introduce some preliminary definitions and previous results that we will use in the later sections. Finally we will provide some notations that will be used through the paper. 
\subsection{Derivation of the first order system}\label{sec:2.1}
%
In this subsection, we will show the details of the construction of the equivalent first order equation \eqref{A3} in both long range and short range cases. Comparing to the regular case \cite{H-P-Z, H-K-P-Z}, the construction of the first order equation in the present paper is a little different due to the singularity of the communication function.\newline 

\noindent$\bullet$ Case 1: $0 < \beta < 1$. It is obvious that the anti-derivative $\Psi(x)$ of the communication function $\psi$ is increasing, odd in its domain and concave on $[0, +\infty)$. Therefore, the spatial configuration $x_i$ naturally satisfies
\begin{equation}\label{B1}
\left\{\begin{aligned}
&\dot{x}_i(t) = \nu_i + \frac{\kappa}{N} \sum_{k=1}^{N} \Psi(x_k(t) - x_i(t)), \qquad t > 0, \, i =1,2,\dots,N,\\
& \nu_i(X^0, V^0) := v_i^0 - \frac{\kappa}{N} \sum_{k=1}^{N} \Psi(x_k^0 - x_i^0),
\end{aligned}
\right.
\end{equation}
where the natural velocities $\nu_i$ depend on the initial conditions of the second order system and the nonlinear interaction function is defined as follows 
 \begin{equation}\label{B2}
 \Psi(x) := \int_0^x \psi(r) dr= sgn(x)\frac{1}{1-\beta} |x|^{1-\beta}, \qquad x \in (-\infty, +\infty).
 \end{equation}

\noindent$\bullet$ Case 2: $\beta \ge 1$. In this case, the singular communication weight is in the form $\psi(r) =  \frac{1}{|r|^\beta}, \beta \ge 1$. As the communication function is non-integrable at origin, we have to avoid the origin when we define the potential function as the anti-derivative of the communication function $\psi$. In order to distinguish the higher order case from the long range case, we will instead use $\Phi(x)$ to represent the potential function and we have,
\begin{equation*}
\Phi(x) =
\begin{cases}
\displaystyle  \int_1^x \psi(r) dr, & x > 0, \\
\displaystyle -  \int_1^{-x} \psi(r) dr, & x < 0.
\end{cases}
\end{equation*}
Note the integration is taken from $1$ to $x$ because $\psi$ is not integrable at origin. Therefore the repulsion can be observed very clearly when $x$ is closed to zero. More precisely, for the critical exponent $\beta=1$ and higher order exponent $\beta>1$, we have the following explicit formula respectively, 
\begin{equation}\label{B3}
\Phi(x) =
\begin{cases}
\displaystyle sgn(x)\log |x|, & \quad x\neq 0,\quad \beta=1  \\
\displaystyle sgn(x)(\beta - 1)^{-1}(1 - |x|^{1-\beta }),  & \quad x\neq 0,\quad \beta>1.
\end{cases}
\end{equation}
For simplicity, we will only study the case $\beta>1$ in detail and the critical case $\beta=1$ can be treated similarly, which will be discussed in the summary. It is obvious that the function $\Phi(x)$ is odd in the domain $(-\infty, 0) \cup (0, +\infty)$. Moreover, it's monotonic increasing on the positive and negative real line respectively with a discontinuous jump at the origin. Then the second order C-S system can be reduced to the first order system as below

\begin{equation} \label{B5}
\left\{\begin{aligned}
&\dot{x}_i(t) = \nu_i + \frac{\kappa}{N} \underset{k \ne i}{\sum_{k=1}^N} \Phi(x_k(t) - x_i(t)), \quad t>0, \ i =1,2,\ldots,N,\\
&\nu_i = v_i^0 - \frac{\kappa}{N} \underset{k \ne i}{\sum_{k=1}^{N}} \Phi(x_k^0 - x_i^0).
\end{aligned}
\right.
\end{equation}
\vspace{0.2cm}

It is obvious that the mean velocity and mean position are both conserved for the second order model \eqref{A1} and first order reduction model  \eqref{A3}. Therefore, there will be no equilibrium state if the mean velocity is nonzero. Thus, according to the Galilean invariance of the system \eqref{A1} and \eqref{A3}, without loss of generality, we assume the initial mean velocity and initial mean position are both zero, i.e.
\begin{equation}\label{B6}
\sum_{i=1}^Nv_i^0=\sum_{i=1}^Nx_i^0=\sum_{i=1}^N\nu_i=0.
\end{equation}
\vspace{0.2cm}
\subsection{Preliminary concepts and previous results}\label{sec:2.2} In this subsection, we will introduce some preliminary concepts and previous results that will be used in the later sections. First, in \cite{P}, the author provide a definition of weak solution to the second order C-S model \eqref{A1} with singular communication function of order $\beta<1$.
 \vspace{0.2cm}
\begin{definition}\label{D2.1}\cite{P}
Let $\{T_n\}$ be the set of times of collision. For each interval $[T_n,T_{n+1}]$ (we assume that $T_{-1}=0$) we consider the problem \eqref{A1} and \eqref{A2} with the initial data $x(T_n)$, $v(T_n)$. Then we say $x$ solves \eqref{A1} and \eqref{A2} with $\beta<1$ on time interval [0,T] with initial data $x(0)$, $v(0)$ if and only if for all $T_n$ and all $t\in(T_n,T_{n+1})$, the function $x\in (C^1([0,T]))^{Nd}$ is a weak in $(W^{2,1}([T_n,t]))^{Nd}$ solution of the C-S model \eqref{A1} and \eqref{A2}, and the initial data are correct (i.e. $x(0)=x(T_{-1})$ and $v(0)=v(T_{-1})$) and for some $n$, we have $T\leq T_n$.
\end{definition}
\vspace{0.2cm}
Definition \ref{D2.1} actually provides a solution $(x,v)\in (C^1([0,T]), C([0,T]))$ to the C-S model \eqref{A1} with singular communication \eqref{A2} where $\beta<1$. Then with the first order reduction in Section \ref{sec:2.1}, we immediately obtain that it is equivalently to say there exists a $C^1$ solution to the first order system \eqref{B1} with nonlinear interaction \eqref{B2} in one dimensional case. More precisely, we provide following equivalence Lemma.
\vspace{0.2cm}
\begin{lemma}\label{L2.1}
For the C-S model \eqref{A1} and singular communication function \eqref{A2} with $\beta<1$ in one dimensional case, the following two assertions are equivalent to each other.
\begin{enumerate}
\item There exists a weak solution in the sense of Definition \ref{D2.1} to the second order system \eqref{A1} with singular communication function \eqref{A2}.
\item there exists a $C^1$ solution to the system \eqref{B1} with interaction \eqref{B2}.
\end{enumerate}
\end{lemma}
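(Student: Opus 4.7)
The proof is a direct consequence of the integral identity that motivates the first-order reduction in Section \ref{sec:2.1}, and the real work lies in patching it across the collision times allowed by Definition \ref{D2.1}. The plan is to exploit the antiderivative relation $\Psi'(x)=\psi(|x|)$ in one dimension, together with the crucial fact that for $\beta<1$ the function $\Psi$ defined by \eqref{B2} extends continuously to the origin (since $|x|^{1-\beta}\to 0$ as $x\to 0$), even though $\psi$ itself is singular there.

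For the direction (1)$\Rightarrow$(2), I would fix an inter-collision interval $(T_n,T_{n+1})$ and, on any compact subinterval $[s,t]\subset(T_n,T_{n+1})$ where trajectories stay separated, apply the chain rule to convert $\frac{\kappa}{N}\sum_k\psi(|x_k-x_i|)(v_k-v_i)$ into $\frac{d}{dt}\bigl[\frac{\kappa}{N}\sum_k\Psi(x_k-x_i)\bigr]$. Integrating the $W^{2,1}$ velocity equation from $s$ to $t$ against this identity and then passing to the limits $s\downarrow T_n$, $t\uparrow T_{n+1}$ (using the continuity of $\Psi$ for $\beta<1$ and the global $C^1$ regularity of $x$) yields the integrated form of \eqref{B1} on $[T_n,T_{n+1}]$. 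Chaining across successive collision times produces \eqref{B1} globally on $[0,T]$ with $\nu_i=v_i^0-\frac{\kappa}{N}\sum_k\Psi(x_k^0-x_i^0)$, and the continuity in $t$ of the right-hand side upgrades $x$ to a $C^1$ solution of \eqref{B1}.

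For the converse, starting from a $C^1$ solution $x$ of \eqref{B1}, I would set $v_i:=\dot x_i$ (automatically continuous), declare $\{T_n\}$ to be the at most countable set of collision times, and check the second-order equation on each inter-collision interval $(T_n,T_{n+1})$. There all pairwise distances are bounded below on compact subintervals, so the right-hand side of \eqref{B1} is classically differentiable; differentiating and applying $\Psi'=\psi$ recovers the C-S velocity equation pointwise, and the resulting $\dot v_i$ is locally bounded, hence in $L^1$ on every $[T_n,t]\subset[T_n,T_{n+1})$, giving the required $W^{2,1}$ regularity. The matching of initial data and of the interface values at each $T_n$ follows automatically from $x\in C^1([0,T])$.

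The main obstacle in both directions is the handling of the collision times: one must justify that even when a denominator $|x_k-x_i|$ vanishes, the sum $\sum_k\Psi(x_k-x_i)$ remains continuous and the integrated identity extends across $T_n$. This hinges entirely on the integrability of $\psi$ at the origin for $\beta<1$, which forces $\Psi$ to be continuous with $\Psi(0)=0$; the same argument breaks down for $\beta\ge 1$, which is precisely why the different reduction \eqref{B5} with the diagonal term excluded is used in Case 2.
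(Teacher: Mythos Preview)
The paper does not actually prove this lemma; it states only that ``the proof directly follows the first order reduction and we will omit the details.'' Your proposal is therefore not competing with a paper proof but rather supplying the omitted details, and the overall strategy---integrate the velocity equation on inter-collision intervals, use the continuity of $\Psi$ at the origin (valid exactly when $\beta<1$) to patch across collision times, and differentiate back for the converse---is correct and matches the intent of Section~\ref{sec:2.1}.

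There is, however, one genuine imprecision in your direction (2)$\Rightarrow$(1). You write that $\dot v_i$ is ``locally bounded, hence in $L^1$ on every $[T_n,t]\subset[T_n,T_{n+1})$.'' This is not right: at the left endpoint $T_n$ a collision occurs, and for a colliding pair $(i,k)$ the first-order equation \eqref{B1} gives $v_k(T_n)-v_i(T_n)=\nu_k-\nu_i$ (all $\Psi$-terms cancel when $x_k(T_n)=x_i(T_n)$), so in the generic case the factor $\psi(|x_k-x_i|)(v_k-v_i)$ genuinely blows up as $s\downarrow T_n$. Local boundedness holds only on compact subsets of the open interval $(T_n,T_{n+1})$, which does not by itself yield $L^1$ up to $T_n$. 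The $L^1$ bound is nonetheless true, but it requires a rate argument: since $|x_k(s)-x_i(s)|\sim|\nu_k-\nu_i|\,(s-T_n)$ near $T_n$, the singular term behaves like $(s-T_n)^{-\beta}$, which is integrable precisely because $\beta<1$. This is the $L^1$ counterpart of the continuity-of-$\Psi$ observation you correctly invoke in direction (1)$\Rightarrow$(2), and it should replace the boundedness claim. A secondary loose end is your assertion that the collision times form an at most countable set: for a general $C^1$ solution of \eqref{B1} with some $\nu_i=\nu_j$, sticking (cf.\ Lemma~\ref{CL1}(iii)) can make the collision set a half-line, which does not fit the discrete indexing of Definition~\ref{D2.1}; the paper addresses this degenerate case separately in Remark~\ref{R3.2}.
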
  
\vspace{0.2cm}
\noindent The proof directly follows the first order reduction and we will omit the details. We emphasize that Lemma \ref{L2.1} shows that the well-posedness of the system \eqref{A1} with $\beta<1$ in one dimensional case is equivalent to the well-posedness of the system \eqref{B1}. As the right hand side vector field in the system \eqref{B1} with interaction \eqref{B2} is continuous, the existence of the $C^1$ solution to \eqref{B1} is guaranteed by the fundamental theory of ODE. Therefore, we only need to show the uniqueness of the system \eqref{B1} with interaction \eqref{B2}.\newline

On the other hand, for $\beta\geq 1$, avoidance of collision will be guaranteed according to \cite{C-C-M-P}. Therefore, the fundamental theory of ODE directly concludes the existence and uniqueness of the solution to the system \eqref{A1} and the equivalent first order system \eqref{B5} with interaction \eqref{B3}. However, when $\beta>1$, the multicluster formation may occur, therefore we need to mathematically define the concept of flocking and multi-cluster formation. 

\begin{definition}\label{D2.2}\cite{H-K-P-Z}
Let $X = (x_1, \ldots, x_N)$ be a solution to the first order C-S model  \eqref{B5} with nonlinear interaction \eqref{B3}.
\begin{enumerate}[(i)]
\item
The $i$-th and $j$-th C-S particles are in the same cluster if and only if the relative position $x_j - x_i$ is uniformly bounded in time:
\[\sup_{0 \le t < \infty} |x_j(t) - x_i(t)| < \infty.\]
\item
The $i$-th and $j$-th particles exhibit cluster-flocking if and only if their relative position $x_j- x_i$ has a limiting value as $t \to \infty$:
\[\lim_{t \to \infty} |x_j(t) - x_i(t)| = x^{\infty}_{ij} < \infty.\]
\item 
Let $\mathcal{I} \subset \{1,2,\ldots,N\} =:\mathcal{N}$. Then $\mathcal{I}$ is a (maximal) cluster if and only if the following two properties hold:
\begin{equation*}
\left\{
\begin{aligned}
& \lim_{t \to \infty} |x_i(t) - x_j(t) | < \infty, \quad i, j \in \mathcal{I},\\
& \lim_{t \to \infty} |x_i(t) - x_j(t)| = \infty, \quad i \in \mathcal{I}, \ j \not\in \mathcal{I}.
\end{aligned}
\right.
\end{equation*}
\item
We call a clustering number $N_c$ as the number of distinct clusters of $\{1,2,\ldots,N\}$.
\end{enumerate}
\end{definition}
\vspace{0.2cm}

\subsection{Notations} Finally, for notational simplicity, we let $x_i^0 = x_i(0)$, $v_i^0 = v_i(0)$, $X^0 = X(0)$ and $V^0 = V(0)$. More over we denote the diameters of phases, velocities and natural velocities of particles respectively as follows
\begin{equation*}
\begin{cases}
\displaystyle x_M(t) =\max_{i \in \mathcal{N}} x_i(t), & x_m(t) =\min\limits_{i \in \mathcal{N}} x_i(t), \qquad D_x(t) = \max\limits_{1 \le i,j \le N} |x_i(t)  - x_j(t)| = x_M(t) - x_m(t), \\
\displaystyle v_M(t) = \max_{1 \le i \le N} v_i(t), & v_m(t) = \min\limits_{1 \le i \le N} v_i(t),\quad \ D_{v}(t)=\max\limits_{1 \le i,j \le N} |v_i(t)-v_j(t)| = v_M(t) - v_m(t),\\
\displaystyle \nu_M = \max_{1 \le i \le N} \nu_i, & v_m = \min\limits_{1 \le i \le N} \nu_i, \quad \ \qquad \ D_{\nu} =\max\limits_{1 \le i,j \le N} |\nu_i-\nu_j| = \nu_M - \nu_m.
\end{cases}
\end{equation*}
\vspace{0.3cm}

\section{Lower order singularity}\label{sec:3}
\setcounter{equation}{0}
\vspace{0.3cm}
In this section, we will study the well-posedness of the system \eqref{A1} in one dimensional case when $0<\beta<1$. According to the previous discussion in Section \ref{sec:2}, it is equivalent to study the well-posedness of the first order system \eqref{B1} with interaction \eqref{B2}. Then, the existence of the solution is guaranteed since the right hand side of \eqref{B1} is continuous, but collision may occur in this case. Thus it is the most important to show the uniqueness of the solution around the collision time. The following lemma shows that the collision between two particles are determined by their natural velocities and relative position.

\begin{lemma}\label{CL1}
Let $X = (x_1, \ldots, x_N)$ be a $C^1$ solution to \eqref{B1} with $0<\beta<1$ and initial data $X^0$. Suppose that the initial positions of $i$-th and $j$-th particles have the order $x_i^0 < x_j^0$. Then the following trichotomy hold.
\begin{enumerate}[(i)]
\item
If $\nu_i < \nu_j$, then $x_i$ and $x_j$ will never collide:
\[|\{t_* \in (0,+\infty): x_i(t_*) = x_j(t_*)\}| = 0. \]
\item
If $\nu_i > \nu_j$, then $x_i$ and $x_j$ will collide exactly once:
\[|\{t_* \in (0, +\infty) : x_i(t_*) = x_j(t_*)\}| = 1.\]
\item
If $\nu_i = \nu_j$, then $x_i$ and $x_j$ will collide in finite time. Moreover, assume that $x_i$ and $x_j$ collide at the instant $t_c$, then $x_i$ and $x_j$ will stick together after $t_c$:
\[x_i(t) = x_j(t), \qquad t \in [t_c, +\infty).\]
\end{enumerate}
\end{lemma}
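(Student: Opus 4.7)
The plan is to introduce the relative position $y(t) := x_j(t) - x_i(t)$, which is $C^1$ in $t$, and differentiate it using \eqref{B1}. Splitting off the two ``self-terms'' (indices $k=i$ and $k=j$) from the sum, one obtains the key identity
\[\dot{y}(t) = (\nu_j - \nu_i) + \frac{\kappa}{N}\sum_{k\ne i,j}\bigl[\Psi(x_k - x_j) - \Psi(x_k - x_i)\bigr] - \frac{2\kappa}{N}\Psi(y).\]
Since $\Psi$ is strictly increasing on $\bbr$, each bracketed summand has the opposite sign of $y$, so the sum is $\le 0$ when $y > 0$ and $\ge 0$ when $y < 0$. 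This yields the master bound $\dot{y} \le (\nu_j - \nu_i) - \frac{2\kappa}{N}\Psi(y)$ whenever $y > 0$, together with the mirror image when $y < 0$.

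For case (i), I argue by contradiction. Let $t_c := \inf\{t > 0 : y(t) = 0\}$; by continuity $y(t_c) = 0$ and $y > 0$ on $[0,t_c)$, which forces $\dot{y}(t_c) \le 0$. But at $t_c$ the cross-sum vanishes (since $x_i(t_c) = x_j(t_c)$) and $\Psi(y(t_c)) = 0$, so the identity above gives $\dot{y}(t_c) = \nu_j - \nu_i > 0$, a contradiction. For case (ii), the master bound gives $\dot{y} \le \nu_j - \nu_i < 0$ as long as $y > 0$, so the linear bound $y(t) \le y(0) + (\nu_j - \nu_i)t$ forces a first collision at some finite $t_c \le y(0)/(\nu_i - \nu_j)$. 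At $t_c$ one has $\dot{y}(t_c) = \nu_j - \nu_i < 0$, so $y$ strictly crosses to the negative regime; after relabelling $i \leftrightarrow j$, the post-collision configuration satisfies exactly the hypothesis of case (i), which forbids any further zero of $y$, giving precisely one collision.

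Case (iii), with $\nu_i = \nu_j$, is the most delicate. The master bound specializes to $\dot{y} \le -\frac{2\kappa}{N(1-\beta)}y^{1-\beta}$ for $y > 0$; the substitution $z := y^{\beta}$ turns this into $\dot{z} \le -\frac{2\kappa\beta}{N(1-\beta)}$, a linear decay that forces $y$ to reach $0$ by some finite $t_c \le \frac{N(1-\beta)}{2\kappa\beta}y(0)^{\beta}$. For the sticking claim, I would track $f(t) := y(t)^2$, which is $C^1$ with $\dot{f} = 2y\dot{y}$; the master bound and its mirror image both give $y\dot{y} \le 0$ when $y \ne 0$, while $\dot{f} = 0$ trivially when $y = 0$, so $f$ is non-increasing on $[0,\infty)$. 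Combined with $f(t_c) = 0$ and $f \ge 0$, this forces $f \equiv 0$ on $[t_c,\infty)$, i.e.\ $x_i \equiv x_j$ after $t_c$.

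The main obstacle is precisely the sticking in case (iii): because $\Psi$ is only H\"older and not Lipschitz near the origin, the ODE system is not uniquely solvable through a collision instant, so one cannot simply invoke ODE uniqueness to pin down the post-collision trajectory. The $y^2$-monotonicity device above is what saves the day: it extracts a pointwise sign of $\frac{d}{dt}(y^2)$ directly from the structure of the equation, and thereby rules out every branch of the (a priori non-unique) continuation except the stuck one.
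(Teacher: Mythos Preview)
Your proof is correct and tracks the paper's argument closely: the key identity for $\dot y$, the sign control on the cross-sum via monotonicity of $\Psi$, the contradiction at a first zero in case~(i), the linear upper bound forcing a collision in case~(ii) followed by an appeal to case~(i) after crossing, and the substitution $z=y^\beta$ giving linear decay in case~(iii) are all exactly what the paper does. The one genuine difference is in the sticking part of~(iii): the paper argues by contradiction, taking $T=\sup\{t\in[t_c,t_*]:y(t)=0\}$ and integrating $\dot y\le 0$ on $(T,t_*)$ to reach $y(t_*)\le 0$, whereas your $f=y^2$ device bundles both sign regimes into a single globally non-increasing $C^1$ quantity and concludes $f\equiv 0$ on $[t_c,\infty)$ in one stroke. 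Both rest on the same differential inequality $y\dot y\le 0$; your packaging is arguably cleaner since it avoids the supremum construction and the without-loss-of-generality on the sign of $y(t_*)$, while the paper's version makes the role of the last departure time slightly more explicit.
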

\begin{proof}
$(i)$~We prove this assertion by contradiction. Suppose $x_i$ and $x_j$ collide in finite time. Then there exists $t_* \in (0, +\infty)$ such that
\begin{equation}\label{C-1}
 x_i(t) < x_j(t), \quad t \in (0, t_*) \text{\quad and \quad} x_i(t_*) = x_j(t_*).
\end{equation}
Note that $x_j(t) - x_i(t)$ at $t = t_*$ satisfies
\begin{align*}
\begin{aligned}
\frac{d}{dt}(x_j - x_i) |_{t = t_*} = \nu_j - \nu_i + \frac{\kappa}{N} \sum_{k=1}^{N}[\Psi(x_k(t_*) - x_j(t_*)) - \Psi(x_k(t_*) - x_i(t_*))] = \nu_j - \nu_i > 0.
\end{aligned}
\end{align*}
As $X$ is a $C^1$ solution, then $\frac{d}{dt}(x_j - x_i)$ is continuous with respect to any $t \in (0, +\infty)$. Therefore, combining the above inequality $\frac{d}{dt}(x_j - x_i)(t_*) > 0$ and the continuity of $\frac{d}{dt}(x_j - x_i)$, we can find a positive constant $\delta$ such that 
\[\frac{d}{dt}(x_j - x_i)(t) > 0, \quad t \in (t_* - \delta, t_*).\]
Then we integrate above inequality on both sides from $t$ to $t_*$ to obtain
\[(x_j - x_i)(t_*) - (x_j - x_i)(t) > 0, \quad t \in (t_* - \delta, t_*).\]
Thus for any $t \in (t_* - \delta, t_*)$, we obtain $x_j(t) - x_i(t) < 0$, which obviously contradicts to \eqref{C-1} and we obtain the desired result.
\newline

\noindent $(ii)$~In the condition that $\nu_i > \nu_j$, we first claim that there will be at least one collision between $i$-th and $j$-th particles. 
Suppose not, i.e.,
\begin{equation}\label{C-2}
x_i(t) < x_j(t), \qquad t \in [0, + \infty).
\end{equation}
Then the dynamics of the difference $x_j(t) - x_i(t)$ is determined by the differential equation below for any time $t$,
\begin{align}\label{C-3}
\begin{aligned}
\dot{x}_j(t) - \dot{x}_i(t) &= \nu_j + \frac{\kappa}{N} \sum_{k=1}^N \Psi(x_k(t) - x_j(t)) - \nu_i - \frac{\kappa}{N} \sum_{k=1}^N \Psi(x_k(t) - x_i(t)) \\
&= \nu_j - \nu_i + \frac{\kappa}{N} \sum_{k=1}^N[\Psi(x_k(t) - x_j(t)) - \Psi(x_k(t) - x_i(t))].
\end{aligned}
\end{align}
Next we are going to determine the sign of the last term in \eqref{C-3}. For this purpose, we split the summation in \eqref{C-3} into three parts as follows,
\begin{align}\label{C-4}
\begin{aligned}
&\sum_{k=1}^N[\Psi(x_k(t) - x_j(t)) - \Psi(x_k(t) - x_i(t))] \\
&= - \sum_{x_k(t) \le x_i(t) \le x_j(t)}[\Psi(x_j(t) - x_k(t)) - \Psi(x_i(t) - x_k(t))] \\
&\hspace{0.5cm} - \sum_{x_i(t) \le x_k(t) \le x_j(t)} [\Psi(x_j(t) - x_k(t)) + \Psi(x_k(t) - x_i(t))] \\
&\hspace{0.5cm} + \sum_{x_i(t) \le x_j(t) \le x_k(t)} [\Psi(x_k(t) - x_j(t)) - \Psi(x_k(t) - x_i(t))].
\end{aligned}
\end{align}
Now if $x_k(t) \le x_i(t) \le x_j(t)$, it is obvious that $x_j(t) - x_k(t) \ge x_i(t) - x_k(t) \ge 0$. While on the other hand, if $x_i(t) \le x_j(t) \le x_k(t)$, we have $x_k(t) - x_i(t) \ge x_k(t) - x_j(t) \ge 0$. Therefore, we immediately obtain that 
\begin{align}\label{C-5}
\left\{
\begin{aligned}
& \Psi(x_j(t) - x_k(t)) - \Psi(x_i(t) - x_k(t)) \ge 0, \qquad \mbox{if}\ \ \ x_k(t) \le x_i(t) \le x_j(t), \\
& \Psi(x_k(t) - x_j(t)) - \Psi(x_k(t)- x_i(t)) \le 0, \qquad \mbox{if}\ \ \ x_i(t) \le x_j(t) \le x_k(t),\\
&\Psi(x_j(t) - x_k(t)) +\Psi(x_k(t) - x_i(t)) \ge 0,\qquad \mbox{if}\ \ \ x_i(t) \le x_k(t) \le x_j(t).
\end{aligned}
\right.
\end{align}
We apply the estimates \eqref{C-5} to yield the negative sign of \eqref{C-4}. Therefore, we combine \eqref{C-3}, \eqref{C-4} and \eqref{C-5} to obtain 
\[\dot{x}_j(t) - \dot{x}_i(t) \le \nu_j - \nu_i < 0.\]
We integrate above inequality on both sides from $0$ to $t$  and obtain that
\begin{equation}\label{C-6}
x_j(t) - x_i(t) \le x_j^0 - x_i^0 + (\nu_j - \nu_i)t.
\end{equation}
Let $t = \frac{x_j^0 - x_i^0}{\nu_i - \nu_j}$, we observe that for any $t \geq \frac{x_j^0 - x_i^0}{\nu_i - \nu_j}$, we have $x_j(t) - x_i(t) \le 0$, which obviously contradicts to \eqref{C-2}. Thus, we have shown that there exists at least one collision in finite time.\newline 
 
 We next prove that there will be exactly one collision if collision occurs. Due to the above analysis, we know that $t_* \in (0, +\infty)$ can be found such that
\[x_i(t) < x_j(t), \quad t \in [0, t_*) \text{\quad and \quad} x_i(t_*) = x_j(t_*).\]
In the extreme case where $x_i^0=x_j^0$, we set $t_*$ to be $0$. Then we apply the same analysis as in $(i)$ and obtain that
\[\left. \frac{d}{dt}(x_j - x_i) \right \vert_{t = t_*} = \nu_j - \nu_i < 0.\]
Hence, there exists $\delta > 0$ such that 
\begin{equation}\label{C-7}
x_j(t) - x_i(t) < 0 \text{\qquad for } t \in (t_*, t_* + \delta).
\end{equation}
Then, \eqref{C-7} and $\nu_j < \nu_i$ show that we can apply the analysis in $(i)$ to determine the dynamics after collision. According to the result of $(i)$, collision will not occur after $t_*$ and thus we can conclude that there is exactly one collision in the second case. 
\newline

\noindent $(iii)$~In the identical case, we will prove collisions occur in finite time by contradiction. Suppose there is no collision in any finite time, then we have  
\[x_i(t) < x_j(t), \qquad t \in (0, +\infty).\]
Due to the identical natural velocity condition $\nu_i = \nu_j$, we can apply the same criteria as in \eqref{C-3}, \eqref{C-4} and \eqref{C-5} to obtain
\begin{align}\label{C-8}
\begin{aligned}
\dot{x}_j(t) - \dot{x}_i(t)&= \frac{\kappa}{N} \sum_{k=1}^{N} [\Psi(x_k(t) - x_j(t)) - \Psi(x_k(t) - x_i(t))]\\
&\le - \frac{2\kappa}{N} \Psi(x_j(t) - x_i(t)) \le - \frac{2\kappa}{N(1-\beta)} (x_j(t) - x_i(t))^{1-\beta}.
\end{aligned}
\end{align}
Now we let $\left(y(t)\right)^{\frac{1}{\beta}} = x_j(t) - x_i(t)$. According to the noncollision assumption, we immediately obtain that
 \begin{equation}\label{C-9}
y(t) > 0, \qquad t\in [0,+\infty).
\end{equation} 
On the other hand, from the estimate \eqref{C-8}, we obtain that $\dot{y}(t) \le - \frac{2\kappa \beta}{N(1-\beta)}$. Then we integrate this inequality from zero to $t$ and oabtain
\[y(t) \le y(0) - \frac{2\kappa \beta}{N(1-\beta)}t, \quad t>0.\]
Then, we let $y(0) - \frac{2\kappa \beta}{N(1-\beta)}t = 0$ and construct an upper bound of collision time $t = \frac{y(0)N(1-\beta)}{2\kappa \beta} = \frac{(x_j^0 -x_i^0)^\beta N(1-\beta)}{2\kappa \beta}$. Therefore, we obtain that
\[y(t) \le 0, \qquad t \in \left[\frac{(x_j^0 -x_i^0)^\beta N(1-\beta)}{2\kappa \beta}, +\infty \right),\]
which is contradictory to \eqref{C-9}. Thus, $i$-th and $j$-th particles with same natural velocity will collide in finite time. Now we set the collision time to be $t_c$ and prove the two particles will stick together after collision. More precisely, we will show that 
\[x_i(t) = x_j(t), \qquad t\in [t_c, +\infty).\]
If not, $t_* \in (t_c, +\infty)$ can be found such that $x_i(t_*) \ne x_j(t_*)$. Assume $x_i(t_*) < x_j(t_*)$ without loss of generality. Then we define a set
\[S = \{t|x_i(t) = x_j(t), t_c \le t \le t_*\},\]
and define $T = \sup S$. Note that $S$ is not empty since $x_i(t_c) = x_j(t_c)$ and we have $x_i(T) = x_j(T)$ due to the continuity of the solution. Moreover we have $T < t_*$ and $x_i(t) < x_j(t)$ for  $t \in (T, t_*)$.
Using the same arguments as \eqref{C-8}, we obtain that
\[\dot{x}_j(t) - \dot{x}_i(t) = \frac{\kappa}{N} \sum_{j=1}^N [\Psi(x_k(t) - x_j(t)) - \Psi(x_k(t) - x_i(t))] \le 0, \qquad  t \in (T, t_*).\]
We integrate from $T$ to $t_*$ on both sides to get
\[\left. (x_j(t) - x_i(t)) \right \vert_{t = t_*} \le \left. (x_j(t) - x_i(t)) \right \vert_{t = T} = 0.\]
Therefore, we have $x_j(t_*) \le x_i(t_*)$, and this is contradictory to $x_i(t_*) < x_j(t_*)$.
Thus, we conclude the desired result.
\end{proof}
\vspace{0.2cm}
According to Lemma \ref{CL1}, the order of particles will be fixed according to the order of natural velocities after finite many collisions and then there would be no more collisions. Therefore, we can split the time line into two layers, one is collision layer which is up to a finite time and the other one is the large time layer in which collision will never occur. The non-collision and well order properties in the large time layer allow us to study the time asymptotical behavior including the formation of equilibrium state and the convergence rate. While for the uniqueness criteria, we have to understand clearly the structure of the solution in the collision layer, especially around the collision times. We will first make complete non-identical assumptions i.e. 
\[\nu_1 < \nu_2 < \cdots < \nu_N.\]
Then the other case can be studied similarly.   
\subsection{Time asymptotical property}
In this part, we will study the large time behavior of the solutions of \eqref{B1}. Actually, we will describe both the upper and lower bound of the distance between particles when time tends to infinity. Moreover, we will show the emergence of mono-cluster flocking and the exponential convergence rate.
\begin{lemma}\label{CL2}
Let $X(t)$ be a solution to \eqref{B1} and \eqref{B2} with initial data $X^0$. Assume that
\[x_1^0 < x_2^0 < \cdots < x_N^0 \text{\quad and \quad} \nu_1 < \nu_2 < \cdots < \nu_N,\]
then there exists a positive constant $C'_{m_1}$ depending on initial configurations and natural velocities such that
\[ \min_{i,j \in \mathcal{N}, i \ne j} |x_i(t) - x_j(t)| \ge C'_{m_1}, \qquad t \in [0, + \infty).\]
\begin{remark}
The time asymptotical lower bound of the distance between particles only depends on the natural velocities, and thus depends on the initial data of second order equation \eqref{A1}.
\end{remark}
\end{lemma}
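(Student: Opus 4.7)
\textbf{Proof plan for Lemma \ref{CL2}.} Since we are given strict orderings $x_1^0<\cdots<x_N^0$ and $\nu_1<\cdots<\nu_N$, assertion (i) of Lemma \ref{CL1} applied to every pair $(i,j)$ immediately rules out collisions, so the order of the particles is preserved for all time:
\[
x_1(t)<x_2(t)<\cdots<x_N(t),\qquad t\in[0,+\infty).
\]
Consequently $\min_{i\ne j}|x_i(t)-x_j(t)|=\min_{1\le i\le N-1}(x_{i+1}(t)-x_i(t))$, so it suffices to bound each consecutive gap $D_i(t):=x_{i+1}(t)-x_i(t)$ from below by a positive constant.

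The plan is to derive an ODE-type estimate on $D_i$ and to exploit the fact that $\Psi$ is H\"older-continuous of exponent $1-\beta$ near $0$. Differentiating,
\[
\dot D_i(t)=(\nu_{i+1}-\nu_i)+\frac{\kappa}{N}\sum_{k=1}^{N}\bigl[\Psi(x_k-x_{i+1})-\Psi(x_k-x_i)\bigr].
\]
The terms $k=i$ and $k=i+1$ contribute exactly $-2\Psi(D_i)=-\tfrac{2}{1-\beta}D_i^{1-\beta}$ by oddness of $\Psi$. For $k\ne i,i+1$, using $\Psi'=\psi$ and writing the difference as an integral, one has
\[
\bigl|\Psi(x_k-x_{i+1})-\Psi(x_k-x_i)\bigr|=\Bigl|\int_{r_1}^{r_2}\frac{dr}{|r|^{\beta}}\Bigr|\le \frac{D_i^{1-\beta}}{1-\beta},
\]
the last inequality coming from the subadditivity $(a+\varepsilon)^{1-\beta}-a^{1-\beta}\le \varepsilon^{1-\beta}$ valid for $a,\varepsilon\ge 0$ and $1-\beta\in(0,1)$. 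This bound is uniform in the positions of the other particles, which is the crucial point because neighbouring gaps $D_{i\pm 1}$ could themselves be small.

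Combining these estimates yields
\[
\dot D_i(t)\ge (\nu_{i+1}-\nu_i)-\frac{2\kappa}{1-\beta}\,D_i(t)^{1-\beta}.
\]
Hence as soon as $D_i(t)$ drops below the threshold
\[
\varepsilon_i^{*}:=\Bigl(\frac{(1-\beta)(\nu_{i+1}-\nu_i)}{4\kappa}\Bigr)^{\frac{1}{1-\beta}},
\]
we have $\dot D_i(t)\ge \tfrac{1}{2}(\nu_{i+1}-\nu_i)>0$. A standard barrier argument then prevents $D_i$ from ever descending through $\varepsilon_i^{*}$, so
\[
D_i(t)\ge \min\!\bigl(D_i(0),\,\varepsilon_i^{*}\bigr),\qquad t\ge 0.
\]
Taking $C'_{m_1}:=\min_{1\le i\le N-1}\min(D_i(0),\varepsilon_i^{*})>0$ gives the desired uniform lower bound, and the explicit formula confirms the dependence on the initial configuration and on the natural velocities stated in the remark.

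The main technical obstacle is the uniform estimate on the contributions of the ``far'' particles: a naive bound of the form $|\Psi(x_k-x_{i+1})-\Psi(x_k-x_i)|\le D_i\,\psi(x_k-x_{i+1})$ is useless because the neighbour $x_{i+2}$ may itself be arbitrarily close to $x_{i+1}$, causing $\psi(x_{i+2}-x_{i+1})$ to blow up. The key insight making the proof work is that integrating $\psi=r^{-\beta}$ absorbs this singularity and yields an estimate depending only on $D_i$, which is precisely what turns the differential inequality into a self-contained barrier for $D_i$.
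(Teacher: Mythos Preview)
Your argument is correct and follows essentially the same route as the paper: order preservation via Lemma~\ref{CL1}(i), a differential inequality on each consecutive gap using the concavity/subadditivity of $\Psi$ (your inequality $(a+\varepsilon)^{1-\beta}-a^{1-\beta}\le\varepsilon^{1-\beta}$ is exactly the paper's concavity step), and a barrier argument. One harmless bookkeeping slip: summing the $N-2$ far terms together with the two self-terms and the prefactor $\kappa/N$ actually yields $\dot D_i\ge(\nu_{i+1}-\nu_i)-\kappa\Psi(D_i)$ rather than $-\tfrac{2\kappa}{1-\beta}D_i^{1-\beta}$, so the paper's threshold $\Psi^{-1}\!\bigl((\nu_{i+1}-\nu_i)/\kappa\bigr)$ is slightly sharper than your $\varepsilon_i^{*}$, but this does not affect the validity of the proof.
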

\begin{proof}
Based on the assumption, it follows from Lemma \ref{CL1} $(i)$ that collision will never occur. Thus the order of the particles will be always preserved, i.e.
\[x_1(t) < x_2(t) < \cdots < x_N(t), \qquad t \in [0, +\infty).\]
Thus, the minimum of the distance between particles can be expressed below
\[\min_{1 \le i \le N-1} |x_{i+1}(t) - x_i(t)| = \min_{i,j \in \mathcal{N}, i \ne j} |x_i(t) - x_j(t)|, \qquad t \in [0,+\infty).\]
Then, the dynamics of the distance $x_{i+1}(t) - x_i(t)$ is governed by the differential equation below
\begin{align}\label{C10}
\begin{aligned}
&\dot{x}_{i+1}(t) - \dot{x}_i(t) \\
&= \nu_{i+1} + \frac{\kappa}{N} \sum_{j = 1}^N \Psi(x_j(t) - x_{i+1}(t))  - \nu_i - \frac{\kappa}{N} \sum_{j = 1}^N \Psi(x_j(t) - x_i(t)) \\
&= \nu_{i+1} - \nu_i - \frac{2\kappa}{N} \Psi(x_{i+1}(t) - x_i(t))  + \frac{\kappa}{N} \sum_{k > i+1} [\Psi(x_k(t) - x_{i+1}(t)) - \Psi(x_k(t) - x_i(t))] \\
&\hspace{0.5cm} + \frac{\kappa}{N} \sum_{k < i} [\Psi(x_k(t) - x_{i+1}(t)) - \Psi(x_k(t) - x_i(t))].
\end{aligned}
\end{align}
We are going to determine the sign of the last two terms. For this purpose, we apply the fact that $\Psi(x)$ is concave on $[0,+\infty)$ and $\Psi(0) = 0$ to have
\begin{align}\label{C11}
\begin{aligned}
&\Psi(x_k(t) - x_{i+1}(t)) - \Psi(x_k(t) - x_i(t)) \ge - \Psi(x_{i+1}(t) - x_i(t)), \qquad   k>i+1,\\
&\Psi(x_{i+1}(t) - x_k(t)) - \Psi(x_i(t) - x_k(t)) \le \Psi(x_{i+1}(t) - x_i(t)), \qquad  k < i.
\end{aligned}
\end{align}
Combining \eqref{C10} and \eqref{C11}, we immediately obtain that 
\begin{equation}\label{C12}
\dot{x}_{i+1}(t) - \dot{x}_i(t) \ge \nu_{i+1} - \nu_i - \kappa \Psi(x_{i+1}(t) - x_i(t)).
\end{equation}
If $x_{i+1}(t) - x_i(t) \le \Psi^{-1}(\frac{\nu_{i+1} - \nu_i}{\kappa})$, we exploit the monotonicity of $\Psi$ to deduce that $\dot{x}_{i+1}(t) - \dot{x}_i(t) \ge 0$, which means $x_{i+1}(t) - x_i(t)$ keeps increasing when $x_{i+1}(t) - x_i(t) \le \Psi^{-1}(\frac{\nu_{i+1} - \nu_i}{\kappa})$. Therefore, we set a positive constant $r = C_{m_{i,i+1}}$ satisfying the equation:
\[\nu_{i+1} - \nu_i - \kappa \Psi(r) = 0, \quad \text{i.e.}, \quad C_{m_{i,i+1}} = \Psi^{-1}(\frac{\nu_{i+1} - \nu_i}{\kappa}).\]
Now according to the differential inequality \eqref{C12} and above analysis, we obtain the following estimates of the relative distance $x_{i+1}(t) - x_i(t)$, 
\begin{equation*}
\begin{cases}
\displaystyle x_{i+1}(t) - x_i(t) \ge C_{m_{i,i+1}}, &\quad if \quad x_{i+1}^0 - x_i^0 \ge C_{m_{i,i+1}},\quad t \ge 0,\\
\displaystyle x_{i+1}(t) - x_i(t) \ge x_{i+1}^0 - x_i^0, &\quad if \quad x_{i+1}^0 - x_i^0 \le C_{m_{i,i+1}},\quad t\ge 0.
\end{cases}
\end{equation*}
Let $C'_{m_1} = \min_{1 \le i \le N-1} \left\{ \min \left\{x_{i+1}^0 - x_i^0, C_{m_{i,i+1}}\right\} \right\}$. Then we apply above estimates to obtain the uniform lower bound for any two adjacent particles, i.e.
\[x_{i+1}(t) - x_i(t) \ge \min_{1 \le i \le N-1} \left\{ \min \left\{x_{i+1}^0 - x_i^0, C_{m_{i,i+1}}\right\} \right\}, \qquad t\ge 0\, , i =1,2,\ldots,N-1.\]
This immediately implies the desired results $\min\limits_{i,j \in \mathcal{N}, i \ne j} |x_i(t) - x_j(t)| \ge C'_{m_1}$ for any $t\ge 0$.
\end{proof}
In the previous lemma, it does not make sense to consider the lower bound of particle distance in small time because of the existence of collisions. While when we study the upper bound of the particle distance, we need to take account of the initial layer to construct the upper bound in whole time, which allows us to prove the emergence of mono-cluster flocking and construct the convergence rate. 
\begin{lemma}
\label{CL3}
Let $X(t)$ be a solution to \eqref{B1} and \eqref{B2} with initial data $X^0$. More over, we assume the natural velocities are well ordered as below,
\[\nu_1 < \nu_2 < \cdots < \nu_N.\]
Then there exists a positive constant $C'_{M_1}$ depending on initial data and natural velocities such that
\[D_x(t) \le C'_{M_1}, \qquad t \in [0, +\infty).\]
\end{lemma}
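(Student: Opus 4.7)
The plan is to bound the diameter $D_x(t) = x_M(t) - x_m(t)$ directly via a differential inequality driven by the spread of natural velocities $D_\nu$ and damped by the nonlinear interaction $\Psi$.

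First I would note that $D_x$ is continuous and, as the difference of a pointwise max and min of $C^1$ functions, locally Lipschitz, hence absolutely continuous and differentiable almost everywhere. At any point $t$ where the maximizing and minimizing indices $i_M(t)$ and $i_m(t)$ are unique, one may differentiate directly using \eqref{B1} to obtain
\[
\dot{D}_x(t) = \bigl(\nu_{i_M} - \nu_{i_m}\bigr) + \frac{\kappa}{N}\sum_{k=1}^N \bigl[\Psi(x_k(t) - x_{i_M}(t)) - \Psi(x_k(t) - x_{i_m}(t))\bigr].
\]

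The key step is to estimate the interaction sum using the structure of $\Psi$. Since $x_{i_m}(t) \le x_k(t) \le x_{i_M}(t)$ for every $k$, the oddness of $\Psi$ rewrites each summand as $-\Psi(x_{i_M} - x_k) - \Psi(x_k - x_{i_m})$, where both arguments are nonnegative. Because $\Psi(x) = \frac{1}{1-\beta}x^{1-\beta}$ is concave on $[0,\infty)$ with $\Psi(0)=0$, it is subadditive there: for $a,b\ge 0$, $\Psi(a+b)\le \Psi(a)+\Psi(b)$. Applying this with $a = x_{i_M}-x_k$ and $b = x_k - x_{i_m}$ yields the pointwise bound
\[
\Psi(x_k - x_{i_M}) - \Psi(x_k - x_{i_m}) \le -\Psi(D_x(t))
\]
for every $k$. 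Summing over $k$ and using the crude estimate $\nu_{i_M}-\nu_{i_m}\le D_\nu$, I obtain the a.e.\ differential inequality
\[
\dot{D}_x(t) \le D_\nu - \kappa\,\Psi(D_x(t)).
\]

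Since $\Psi:[0,\infty)\to[0,\infty)$ is a strictly increasing bijection, the threshold $r^\ast := \Psi^{-1}(D_\nu/\kappa)$ is well defined, and the right-hand side above is strictly negative whenever $D_x(t) > r^\ast$. A standard comparison argument for absolutely continuous functions then shows that $D_x$ can never exceed $\max\{D_x(0), r^\ast\}$: once $D_x$ crosses above $r^\ast$, it is forced to decrease, and it cannot re-enter any level above its initial value. Setting
\[
C'_{M_1} := \max\Bigl\{\,D_x(0),\ \Psi^{-1}\!\bigl(D_\nu/\kappa\bigr)\Bigr\}
\]
gives the claimed uniform-in-time upper bound.

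The only mild technical obstacle is the non-smoothness of $D_x$ at times when the extremal indices switch (either by a collision within the initial layer identified in Lemma \ref{CL1}, or by two particles sharing the current maximum/minimum value). Since collisions are finite in number and the set of times with a non-unique extremizer has measure zero, the inequality holds almost everywhere, which is enough to run the comparison argument on the absolutely continuous function $D_x$. No strict ordering of the initial positions is required, so the argument applies under the hypotheses of the lemma as stated.
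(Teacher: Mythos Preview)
Your proposal is correct and follows essentially the same route as the paper: both derive the a.e.\ differential inequality $\dot{D}_x \le D_\nu - \kappa\,\Psi(D_x)$ via the concavity/subadditivity of $\Psi$ on $[0,\infty)$, and both arrive at the identical constant $C'_{M_1}=\max\{D_x(0),\,\Psi^{-1}(D_\nu/\kappa)\}$. The only cosmetic difference is in handling the regularity of $D_x$: you invoke absolute continuity and an a.e.\ argument, whereas the paper uses Lemma~\ref{CL1} to split $[0,\infty)$ into finitely many collision-free intervals on which the solution is analytic and then patches by continuity.
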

\begin{proof}
Based on the Lemma \ref{CL1} and above analysis, there are only finite many collisions that may occur. Therefore, we can split the time line into a finite union as below
\[[0,+\infty)=\bigcup_{\alpha=1}^lI_{\alpha},\]
where the end points of $I_{\alpha}$ denote the collision time. Then in the interior of $I_{\alpha}$, the vector field in the system is analytic and thus the diameter $x_M-x_m$ is Lipschitz continuous with respect to $t$. 
By simple calculation and the concavity of $\Psi$, we derive that the dynamics of the diameter $D_x(t)$ in the interior of each $I_{\alpha}$ is governed by the differential equation below
\begin{align}\label{C13}
\begin{aligned}
&\dot{x}_M(t) - \dot{x}_m(t) \qquad, t\in I^0_{\alpha}\\
&\le D_{\nu} + \frac{\kappa}{N} \sum_{k=1}^N [\Psi(x_k(t) - x_M(t)) - \Psi(x_k(t) - x_m(t))] \\
&\le D_{\nu}- \frac{\kappa}{N} \sum_{k=1}^N \Psi(x_M(t) - x_m(t)) \\
&= D_{\nu} - \kappa \Psi(x_M(t) - x_m(t)),
\end{aligned}
\end{align}
where $D_{\nu}$ is the diameter of natural velocities. Then, we can analyze \eqref{C13} similarly as in the proof of Lemma \ref{CL2} and apply the continuous property of the solution to conclude that 
\[D_x(t) \le C'_{M_1}, \quad C'_{M_1} = \max \left\{ x_M^0 - x_m^0, C_{1,N} \right\}, \quad C_{1, N} = \Psi^{-1}(\frac{D_{\nu}}{\kappa}), \qquad t \ge 0.\]

\end{proof}
Now, we are ready to show the rate of convergence to the equilibrium for the solution to \eqref{B1} and \eqref{B2}. Actually, we will use the second order equation to estimate the convergence rate of velocity diameter first, and then we can construct the equilibrium state and show the convergence to the equilibrium state. As mentioned in Section \ref{sec:2}, in order to construct the equilibrium state, we have to assume the zero mean of natural velocities. Then we have the following lemma.
\begin{lemma}\label{CL4}
Let $X(t)$ be a solution to \eqref{B1} and \eqref{B2} with initial data $X^0$. More over, we assume the natural velocities and initial data satisfy the properties below,
\[\nu_1 < \nu_2 < \cdots < \nu_N,\quad \sum_{i=1}^N x_i^0 = 0,\quad \sum_{i=1}^N \nu_i = 0.\]
Then the mono-cluster flocking will emerge unconditionally and the convergence of the solution to the equilibrium state is exponentially fast. More precisely, there exists an equilibrium state $X^\infty=(x_1^\infty,\cdots,x_N^\infty)$ such that 
\[|X(t)-X^{\infty}|\leq C e^{-\kappa \psi(C'_{M_1})t},\]
where the constant $C'_{M_1}$ is defined in Lemma \ref{CL3} and the constant coefficient $C$ depends on initial data and natural velocities. 
\end{lemma}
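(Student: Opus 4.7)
The plan is to reduce the statement to a second order Cucker--Smale analysis. Define $v_i(t) := \dot{x}_i(t)$ and differentiate \eqref{B1} on any collision-free interval to recover the second order relation
\[\dot{v}_i(t) = \frac{\kappa}{N}\sum_{k=1}^{N}\psi(|x_k(t)-x_i(t)|)(v_k(t)-v_i(t)).\]
By Lemma \ref{CL1} only finitely many collisions can occur, so there exists $T^*\ge 0$ after which the ordering of the particles is permanently fixed in accordance with $\nu_1<\cdots<\nu_N$ and the trajectory is $C^\infty$ on $[T^*,\infty)$. Summing \eqref{B1} and using antisymmetry of $\Psi$ together with $\sum_i\nu_i=0$ yields $\sum_i v_i(t)\equiv 0$, so the mean velocity is conserved and identically zero.

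Next I would bound the velocity diameter $D_v(t)$ via the standard maximum principle argument. At an index realizing $v_M$, using the uniform upper bound $D_x(t)\le C'_{M_1}$ from Lemma \ref{CL3} and the monotone decrease of $\psi$, one obtains on $[T^*,\infty)$
\[\dot{v}_M(t)\le \kappa\psi(C'_{M_1})(\bar v - v_M(t)) = -\kappa\psi(C'_{M_1})\,v_M(t),\]
and symmetrically $\dot{v}_m(t)\ge -\kappa\psi(C'_{M_1})\,v_m(t)$. Subtracting and invoking Gr\"onwall's lemma gives
\[D_v(t)\le D_v(T^*)\exp\bigl(-\kappa\psi(C'_{M_1})(t-T^*)\bigr),\qquad t\ge T^*.\]
Since $\bar v\equiv 0$ forces $v_m\le 0\le v_M$, we have $|v_i(t)|\le D_v(t)$ for every $i$. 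Integrating then shows that $x_i(t)$ is Cauchy and defines $x_i^\infty := \lim_{t\to\infty} x_i(t)$ with
\[|x_i(t)-x_i^\infty|\le \int_t^\infty |v_i(s)|\,ds \le C\,e^{-\kappa\psi(C'_{M_1})\,t},\]
for a constant $C$ built from $T^*$, $D_v(T^*)$, and $\kappa\psi(C'_{M_1})$. Finally, Lemma \ref{CL2} guarantees $|x_i^\infty-x_j^\infty|\ge C'_{m_1}>0$, so $\Psi$ is continuous at the limit configuration; passing $t\to\infty$ in \eqref{B1} and using $v_i(t)\to 0$ gives the equilibrium identity $\nu_i + \frac{\kappa}{N}\sum_k\Psi(x_k^\infty-x_i^\infty)=0$, confirming that $X^\infty$ is a genuine stationary state of \eqref{B1}.

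The principal obstacle is that for $\beta<1$ the right hand side of \eqref{B1} is only H\"older continuous, so $X(t)$ is merely $C^1$ and $v_i$ need not be differentiable at collision times; the second order equation for $v_i$ therefore holds only on the open collision-free intervals. The finiteness of collisions from Lemma \ref{CL1} is precisely what allows the pre-$T^*$ portion of the evolution to be absorbed into the multiplicative constant $C$ and the second order machinery to be applied cleanly on $[T^*,\infty)$. A secondary subtlety worth flagging is that the equilibrium relation only makes sense when $X^\infty$ is collision-free, so one genuinely needs the \emph{asymptotic} lower bound of Lemma \ref{CL2}, not merely a finite-time one.
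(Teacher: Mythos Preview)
Your proposal is correct and follows essentially the same strategy as the paper: pass to the second order system on collision-free intervals, use the diameter bound $C'_{M_1}$ from Lemma \ref{CL3} together with the monotonicity of $\psi$ to obtain an exponential differential inequality for the velocity spread, and then integrate to produce the limit $X^\infty$ with exponential convergence.

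The one organizational difference is worth noting. You wait until the last collision time $T^*$ and run the second order argument only on $[T^*,\infty)$, absorbing the finite initial layer into the constant $C$. The paper instead partitions $[0,\infty)$ into the finitely many inter-collision intervals $I_\alpha$, derives $D_v(t)\le D_v(t_\alpha)e^{-\kappa\psi(C'_{M_1})(t-t_\alpha)}$ on each $I_\alpha^0$, and then uses continuity of $v$ across the collision times to concatenate these estimates into the global bound $D_v(t)\le D_v(0)e^{-\kappa\psi(C'_{M_1})t}$ valid from $t=0$. Both routes are valid; the paper's version yields a slightly more explicit constant, while yours is marginally simpler to write down. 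Your additional verification that $X^\infty$ satisfies the equilibrium relation (using the asymptotic lower bound from Lemma \ref{CL2}) goes beyond what the paper actually checks, but it is a welcome sanity check and correctly identifies why the collision-free nature of the limit matters.
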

\begin{proof}
Similar as in Lemma \ref{CL3}, there are only finite many collisions that may occur and we can split the time line into a finite union as below
\[[0,+\infty)=\bigcup_{\alpha=1}^lI_{\alpha},\]
where the end points of $I_{\alpha}$ denote the collision time. Then in the interior of $I_{\alpha}$, the vector field in the system is analytic and thus we can go back to the second order equation to study the dynamics of the velocity. More precisely, we take time derivative of the system \eqref{B1} and obtain the second order system \eqref{A1} as follows,
\begin{equation*}
\left\{\begin{aligned}
&\dot{x_i}=v_i,\quad t>0,\quad i=1,\cdots,N,\\
&\dot{v_i}=\frac{\kappa}{N}\sum_{j=1}^N\psi(\|x_i-x_j\|)(v_j-v_i),\\
&x_i(0)=x_i^0,\quad v_i(0)=\nu_i + \frac{\kappa}{N} \sum_{k=1}^{N} \Psi(x_k^0 - x_i^0),
\end{aligned}
\right.
\end{equation*}
where the solution of the above second order system is in the sense of Definition \ref{D2.1}. Then, the analyticity of the solution in the interior of $I_{\alpha}$ implies the diameter of velocity of the above second order system is Lipschitz continuous. Therefore, we may estimate the velocity diameter in the interior of $I_{\alpha}$ as below,
\begin{align}\label{C14}
\begin{aligned}
&\dot{v}_M(t) - \dot{v}_m(t) ,\qquad t\in I^0_{\alpha}\\
&= \frac{\kappa}{N} \sum_{j=1}^N \psi(x_j(t) - x_M(t))(v_j(t) - v_M(t)) - \frac{\kappa}{N} \sum_{j=1}^N \psi(x_j(t) - x_m(t))(v_j(t) - v_m(t)) \\
&= - \frac{\kappa}{N} \sum_{j=1}^N \psi\left(|x_j(t) - x_M(t)|\right)(v_M(t) - v_j(t))  - \frac{\kappa}{N} \sum_{j=1}^N \psi(|x_j(t) - x_m(t)|)(v_j(t) - v_m(t)).
\end{aligned}
\end{align}
According to Lemma \ref{CL3}, the diameter of position has a uniform upper bound $C'_{M_1}$ and thus we immediately have
\[\left| x_j(t) - x_M(t)\right| \le C'_{M_1}, \quad \left| x_j(t) - x_m(t)\right| \le C'_{M_1}, \qquad j \in \mathcal{N}.\]
Now, we combine \eqref{C14}, the upper bound of relative distance and the monotone decreasing property of $\psi$ on the positive real line to obtain that 
\begin{align*}
\begin{aligned}
&\dot{v}_M(t) - \dot{v}_m(t), \qquad t\in I^0_{\alpha}\\
&\le - \frac{\kappa}{N} \sum_{j=1}^N \psi(C'_{M_1}) (v_M(t) - v_j(t))   - \frac{\kappa}{N} \sum_{j=1}^N \psi(C'_{M_1}) (v_j(t) - v_m(t)) \\
&= - \frac{\kappa}{N} \psi(C'_{M_1}) \sum_{j=1}^N (v_M(t) - v_m(t)) \\
&= - \kappa \psi(C'_{M_1}) (v_M(t) - v_m(t))
\end{aligned}
\end{align*}
The above differential inequality provides the estimate of the velocity diameter in the interior of $I_{\alpha}$ as below
\[v_M(t) - v_m(t) \le (v_M(t_\alpha) - v_m(t_\alpha)) e^{-\kappa \psi(C'_{M_1})(t-t_\alpha)}, \qquad t \in I^0_{\alpha},\]
where $t_\alpha$ is the start point of the interval $I_{\alpha}$. Then, due to the continuity of the velocity, above estimates also hold for the end points of each $I_{\alpha}$. Therefore we can connect all $I_{\alpha}$ together and obtain
\[v_M(t) - v_m(t) \le (v_M^0 - v_m^0) e^{-\kappa \psi(C'_{M_1})t}, \qquad t \ge 0.\]
Then, we apply the zero mean of velocity, the conservation law of the mean velocity and the above exponential decay estimate of velocity diameter to have
\[|v_i(t)| \le v_M(t) - v_m(t) \le (v_M^0 - v_m^0) e^{-\kappa \psi(C'_{M_1})t}, \qquad t \ge 0.\]
Now, we turn back to the study on spatial variable. In fact, according to the second order system \eqref{A1} and the exponential decay estimate of the velocity, the time difference of the spatial variable can be estimated as follows,
\[|x_i(t_2)-x_i(t_1)|\leq \int_{t_1}^{t_2}|v_i(t)|dt\leq \int_{t_1}^{t_2}(v_M^0 - v_m^0) e^{-\kappa \psi(C'_{M_1})t}dt\leq C\left(e^{-\kappa \psi(C'_{M_1})t_1}-e^{-\kappa \psi(C'_{M_1})t_2}\right).\]  
Therefore, it is easy to apply Cauchy's criteria to prove that there exists a limit state $x_i^\infty$ such that $x_i(t)$ converges to $x_i^\infty$ when $t$ tends to infinity. Moreover, if we let $t_2=+\infty$ in the above estimate of time difference of $x_i(t)$, then we obtain that 
\[|x_i(t_1)-x_i^\infty|\leq Ce^{-\kappa \psi(C'_{M_1})t_1}.\] 
This finishes the proof of the lemma. 
\end{proof}

\subsection{Uniqueness}
In this part, we concern about the uniqueness of the solution to the system \eqref{B1}. According to the previous analysis, the most important point is to describe the particles' behavior around the collision times. Now let $X(t)$ be a solution of the system \eqref{B1} with initial data $X_0$, then similar as the discussion in previous sections, we can split the time interval into a finite union i.e.
\[[0,+\infty)=\bigcup_{\alpha=1}^lI_{\alpha}=[0,t_1)\cup[t_1,t_2)\cdots\cup[t_{l-1},+\infty),\]
where the end points $t_i$ $(i=1,\ldots,l-1)$ denote the collision time of the solution $X(t)$. The following lemma shows that the solution of the system \eqref{B1} is unique in $I_1$ i.e. before the first collision.
\begin{lemma}\label{CL5}
Assume the natural velocities of the system \eqref{B1} are different from each other:
\[\nu_k \ne \nu_l, \quad k \ne l, \quad k,l \in \mathcal{N},\] 
and there is another solution $\bar{X}(t)$ to the system \eqref{B1}, which has the same initial data $X_0$ as $X(t)$. Then we have 
\[\bar{X}(t)=X(t),\quad t\in I_1.\]
\end{lemma}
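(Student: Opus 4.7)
The plan is to exploit the fact that, although $\Psi$ fails to be Lipschitz at the origin, the given solution $X(t)$ stays strictly away from the singularity of $\psi$ throughout $I_1 = [0, t_1)$. On such a non-collision region the vector field in \eqref{B1} is locally Lipschitz, and standard Picard--Lindel\"of uniqueness applies; one then propagates the equality $X \equiv \bar X$ up to $t_1$ via a continuous-induction (supremum) argument.

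More concretely, I would first observe that for each fixed $\tau \in [0, t_1)$, compactness of $[0,\tau]$ together with the continuity of $X$ and Lemma \ref{CL1} yields a strict positive lower bound
\[
\min_{i \neq j}\, \min_{t \in [0,\tau]} |x_i(t) - x_j(t)| \ \geq\ \delta(\tau) \ >\ 0.
\]
Since $\Psi$ is $C^\infty$ on $\bbr \setminus \{0\}$, with derivative $\psi(r)=|r|^{-\beta}$, the right-hand side of \eqref{B1} is $C^1$ (hence locally Lipschitz) on the open set
\[
U_\delta \ =\ \{Y \in \bbr^N : \min_{i \neq j} |y_i - y_j| > \delta/2\}.
\]
Next, set
\[
T^\star \ =\ \sup\bigl\{\, t \in [0,t_1) : X(s) = \bar X(s) \text{ for all } s \in [0,t] \,\bigr\}
\]
and suppose toward a contradiction that $T^\star < t_1$. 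By continuity $X(T^\star) = \bar X(T^\star)$, and since $T^\star < t_1$ this common configuration lies in some $U_\delta$. Applying Picard--Lindel\"of in a small neighborhood of $X(T^\star)$ then forces $X = \bar X$ on $[T^\star, T^\star + \eta)$ for some $\eta > 0$, contradicting the maximality of $T^\star$. Hence $T^\star = t_1$, and $X \equiv \bar X$ on $I_1$.

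The main subtlety, and the only point that genuinely requires care, is the potential degeneration $\delta(\tau) \to 0$ as $\tau \nearrow t_1$: there is no uniform-in-time positive lower bound on pairwise distances up to the collision instant, so a direct Gr\"onwall estimate on all of $I_1$ at once is not available. This is precisely why I arrange the argument as a local continuation: we never need a Lipschitz constant uniform on $I_1$, only one valid in a neighborhood of each $T^\star < t_1$, which is enough to push the equality strictly past $T^\star$ and thus close the supremum argument. The degenerate initial configuration where two positions already coincide falls, after relabeling, into case (ii) of Lemma \ref{CL1} with $t_\ast = 0$, so $t_1 = 0$ and the statement is vacuous; hence we may assume throughout that the initial positions are distinct.
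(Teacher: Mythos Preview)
Your argument is correct and follows essentially the same approach as the paper. The paper's own proof is terse: it simply observes that if the initial data contain a collision then $I_1=\{0\}$ and the claim is trivial, while otherwise the right-hand side of \eqref{B1} is analytic along $X$ on $[0,t_1)$ and ``ODE theory guarantees the uniqueness of the solution in $I_1$.'' Your continuous-induction (supremum) argument is exactly the standard way to unpack that one-line appeal to ODE theory, making explicit why a Lipschitz constant valid only locally near each $T^\star<t_1$ suffices to propagate $X\equiv\bar X$ through all of $I_1$ even though the constant blows up as $t\nearrow t_1$.
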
 
\begin{proof}
If collision occurs initially, then $I_1=\{0\}$. As $\bar{X}$ and $X$ has the same initial data, we immediately conclude $\bar{X}=X$ for $t\in I_1$. Now, if there is not any collision initially, then $I_1$ is an interval with end points to be zero and the first collision time $t_1$. Therefore, the right hand-side vector field in the system \eqref{B1} is analytic in $[0,t_1)$ and thus the ODE theory guarantees the uniqueness of the solution in $I_1$. 
\end{proof}
Due to the continuity of $X$ and $\bar{X}$, we immediately conclude that $X(t_1)=\bar{X}(t_1)$. If we make $t_1$ to be the start point, then $X(t)$ and $\bar{X}(t)$ can be viewed as solutions to the system \eqref{B1} with same initial data $X(t_1)$. Therefore, in order to prove the uniqueness of the solution for any initial data, we only need to show the uniqueness of solution to the system \eqref{B1} with initial data containing collisions.

\begin{lemma}\label{CL6}
Assume the natural velocities of the system \eqref{B1} are different from each other. Moreover, we assume that the initial data contains some collisions i.e. $x_i^0 = x_j^0$ for some $i,j$. Then, the solution of system \eqref{B1} is unique.
\end{lemma}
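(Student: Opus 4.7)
The plan is to bridge the initial collision layer with the subsequent non-collision regime by combining three ingredients: a pointwise derivative computation at $t=0$ that forces every initially coinciding pair to separate linearly in time, a local Lipschitz estimate for $\Psi$ away from the origin, and a Gronwall inequality with the singular but integrable kernel $t^{-\beta}$. Once uniqueness is established on a short initial interval $[0,\epsilon]$, Lemma \ref{CL5} applied on each subsequent non-collision subinterval (whose endpoints are the finitely many isolated collision times identified by Lemma \ref{CL1}) extends the conclusion to all of $[0,+\infty)$.

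I would let $X(t)$ and $\bar X(t)$ be two $C^1$ solutions of \eqref{B1}--\eqref{B2} sharing the initial data $X^0$. For any pair $(i,j)$ with $x_i^0=x_j^0$, the identities $\Psi(x_k^0-x_i^0)=\Psi(x_k^0-x_j^0)$ for every $k$ collapse the reaction terms in the equations for $x_i$ and $x_j$, leaving $\dot x_j(0)-\dot x_i(0)=\nu_j-\nu_i$, which is nonzero by hypothesis. Continuity of the derivative differences at $t=0$ for both $X$ and $\bar X$ then produces constants $\epsilon>0$ and $c>0$ with the property that, for every initially colliding pair with (say) $\nu_j>\nu_i$, both $x_j(t)-x_i(t)\ge c t$ and $\bar x_j(t)-\bar x_i(t)\ge c t$ on $(0,\epsilon]$. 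Continuity also provides a uniform strictly positive lower bound $c'$ on the separation of the pairs that are not initially colliding, with identical orderings for both solutions.

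Next I would run the standard Gronwall trick with a singular kernel. With the lower bounds just obtained and the sign-matching $\mathrm{sgn}(x_k-x_i)=\mathrm{sgn}(\bar x_k-\bar x_i)$ for every pair on $(0,\epsilon]$, the Lipschitz constant of $\Psi$ on the relevant arguments is bounded by $(ct)^{-\beta}$ for the colliding pairs and by a fixed constant for the rest. Setting $D(t)=\max_i|x_i(t)-\bar x_i(t)|$ and subtracting the two systems, summation over the indices yields
\[\dot D(t)\le C\,t^{-\beta} D(t),\qquad t\in(0,\epsilon],\]
in the almost-everywhere sense appropriate to the Lipschitz function $D$. Because $D(0)=0$ and $\int_0^\epsilon s^{-\beta}\,ds=\epsilon^{1-\beta}/(1-\beta)<\infty$ thanks to $\beta<1$, the integral form of Gronwall's lemma forces $D\equiv 0$ on $[0,\epsilon]$, so $X=\bar X$ there; Lemma \ref{CL5} then propagates the equality across each subsequent analytic subinterval, and iterating over the finitely many future collisions (all isolated, by Lemma \ref{CL1}) closes the argument.

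The step I expect to be delicate is the third one: the Gronwall estimate depends crucially on the sign-matching of the pairwise differences and on the $ct$ lower bound holding simultaneously for both solutions on a \emph{common} neighbourhood of $0$. If a single pair fails this, the Lipschitz constant of $\Psi$ for that pair blows up faster than $t^{-\beta}$ and the kernel loses its integrability. The hypothesis of pairwise distinct natural velocities is precisely what, through the $t=0$ derivative computation, forces each solution to separate every colliding pair in the same direction and at the same linear rate, and this is what ultimately makes the singular Gronwall argument go through.
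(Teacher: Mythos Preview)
Your argument is correct and takes a genuinely different route from the paper's. You exploit the quantitative linear separation $|x_j(t)-x_i(t)|\ge ct$ of each initially coinciding pair (driven by the nonzero natural-velocity gap at $t=0$) together with the local Lipschitz bound $|\Psi'(s)|=|s|^{-\beta}$ to feed a singular but integrable kernel $Ct^{-\beta}$ into Gronwall's inequality, forcing $D\equiv 0$ on a short interval. The paper, by contrast, never estimates $\Psi'$ and relies only on the global \emph{monotonicity} of $\Psi$ on $\mathbb{R}$. After fixing a short interval $(0,t_s)$ on which both solutions are collision-free and a subinterval $[\delta,t_*]$ on which both are analytic, it partitions $[\delta,t_*]$ into finitely many pieces $J_\gamma$ on which the ordering of the differences $\{x_k-\bar x_k\}_k$ is fixed; on each piece, if $D(t)=x_{M_\gamma}(t)-\bar x_{M_\gamma}(t)$ realizes the maximum, then $x_l-x_{M_\gamma}\le \bar x_l-\bar x_{M_\gamma}$ for every $l$, and monotonicity of $\Psi$ yields $\dot D\le 0$ directly, giving the contradiction $D(t_*)\le D(\delta)$. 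Your approach is more robust---it would survive replacing $\Psi$ by any odd function whose derivative blows up no worse than $|s|^{-\beta}$ with $\beta<1$, regardless of monotonicity---while the paper's monotonicity trick is slicker and dispenses with both the linear-separation estimate and the singular Gronwall, at the price of the somewhat ad hoc analyticity partition.
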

\begin{proof}
Let $X(t)$ and $\bar{X}(t)$ are two solutions of the system \eqref{B1} and satisfy $X^0 = \bar{X}^0$ that contain at least one collision. In this case, the first collision time is $t_1=0$ and thus we have 
\[[0,+\infty)=\bigcup_{\alpha=2}^lI_{\alpha}=[t_1,t_2)\cup\cdots\cup[t_{l-1},+\infty),\]
where $t_i$ denotes the collision time of $X(t)$. Due to the continuity property of solutions and the assumption of different natural velocities, we can find a time $t_s$ such that $0<t_s<t_2$ and the particle orbits of $\bar{X}(t)$ do not collide in $(0,t_s)$. Moreover, as $t_s<t_2$, we know the particle orbits of $X(t)$ do not collide in $(0,t_s)$ according to the definition of collision time $t_2$.\newline 

\noindent$\bullet$ (Step 1) We will first prove $\bar{X}(t)=X(t)$ in $[0,t_s)$ by contradiction. Suppose not, then there exist $p \in \mathcal{N}$ and $t_* \in (0, t_s)$ such that 
\[x_p(t_*) \ne \bar{x}_p(t_*).\]
\noindent Without loss of generality, assume $x_p(t_*) - \bar{x}_p(t_*) = C > 0$. Since $x_k(t) - \bar{x}_k(t)$ is continuous and $x_k(0) - \bar{x}_k(0) = 0$, there exists $0 < \delta < t_*$ such that
\begin{equation}\label{C16}
|x_k(t) - \bar{x}_k(t)| \le \frac{C}{2}, \text{\quad for } t \in [0, \delta],\quad k=1,2,\ldots,N.
\end{equation}
Since there is not any collision in $[\delta, t_*]$ for $X(t)$, we know that $x_k(t)(k=1,2,\ldots,N)$ is analytic in $[\delta, t_*]$. The same arguments can be also applied to $\bar{X}(t)=(\bar{x}_1(t), \bar{x}_2(t), \ldots, \bar{x}_N(t))$. Therefore, $x_k(t) - \bar{x}_k(t)$ and $x_k(t) - \bar{x}_k(t) - (x_l(t) - \bar{x}_l(t))$ are both analytic in $[\delta, t_*]$. Then, according to the property of analytic functions, 
we may split $[\delta, t_*]$ into a finite union
\[[\delta, t_*] = \bigcup_{\gamma=1}^m J_{\gamma},\] 
such that both the sign and the order of $\{x_k-\bar{x}_k\}$ are preserved in each $J_{\gamma}$. More precisely, in each time interval $J_\gamma$, either $x_k(t) \ge \bar{x}_k(t)$ or $x_k(t) \le \bar{x}_k(t)$ holds, and either $x_l(t) - \bar{x}_l(t) \ge x_k(t) - \bar{x}_k(t)$ or $x_l(t) - \bar{x}_l(t) \le x_k((t) - \bar{x}_k(t)$ holds. Now, we study the dynamics of $D(t) = \max_{k \in \mathcal{N}} |x_k(t) - \bar{x}_k(t)|$ in each interval $J_\gamma$. We know that in each $J_\gamma$ there exists a fixed index $M_\gamma$ such that
\[D(t) = |x_{M_\gamma}(t) - \bar{x}_{M_\gamma}(t)|, \qquad t \in J_\gamma.\]
Suppose $x_{M_\gamma}(t) \ge \bar{x}_{M_\gamma}(t)$ in $J_\gamma$, then we have $D(t) = x_{M_\gamma}(t) - \bar{x}_{M_\gamma}(t)$. Then the dynamics of $D(t)$ in $J_\gamma$ is governed by the following equation
\begin{align}\label{C17}
\begin{aligned}
\dot{D}(t) &= \dot{x}_{M_\gamma}(t) - \dot{\bar{x}}_{M_\gamma}(t) \\
&= \nu_{M_\gamma} + \frac{\kappa}{N} \sum_{j=1}^N \Psi(x_j(t) - x_{M_\gamma}(t))  - \nu_{M_\gamma} - \frac{\kappa}{N} \sum_{j=1}^N \Psi(\bar{x}_j(t) - \bar{x}_{M_\gamma}(t)) \\
&= \frac{\kappa}{N} \sum_{l=1}^N [\Psi(x_l(t) - x_{M_\gamma}(t)) - \Psi(\bar{x}_l(t) - \bar{x}_{M_\gamma}(t))].
\end{aligned}
\end{align}
According to the definition of $D(t)$, we immediately have 
\[x_l(t)-\bar{x}_l(t)\leq |x_l(t)-\bar{x}_l(t)|\leq  \max_{k \in \mathcal{N}} |x_k(t) - \bar{x}_k(t)| = x_{M_\gamma}(t) - \bar{x}_{M_\gamma}(t), \quad l =1,\ldots,N, \  t \in J_{\gamma},\]
which is equivalent to say 
\begin{equation}\label{C18}
x_l(t)-x_{M_\gamma}(t)\leq \bar{x}_l(t)- \bar{x}_{M_\gamma}(t), \quad l =1,\dots,N, \ t \in J_{\gamma}.
\end{equation}
Now combing \eqref{C17}, \eqref{C18} and the monotonic increasing property of the interaction function $\Psi$, we obtain $\dot{D}(t)\leq 0$. Thus we can integrate $\dot{D}(t)$ from $\delta$ to $t_*$ to get $D(t_*) \le D(\delta)$. According to \eqref{C16}, we obtain 
\begin{equation}\label{C19}
D(\delta) = \max_{k \in \mathcal{N}} |x_k(\delta) - \bar{x}_k(\delta)| \le \frac{C}{2}.
\end{equation}
However, from the assumption $x_p(t_*) - \bar{x}_p(t_*) = C > 0$, we have 
\[D(t_*) = \max_{k \in \mathcal{N}} |x_k(t_*) - \bar{x}_k(t_*)| \ge |x_p(t_*) - \bar{x}_p(t_*)| = C,\]
which is a contradiction to \eqref{C19}. Thus we can conclude that 
\[X(t) = \bar{X}(t) \text{\quad in } [0,t_s).\]

\noindent$\bullet$ (Step 2) Now we prove the uniqueness of the solution in the interval $[t_s,t_2]$. By the continuity of $X$ and $\bar{X}$, we have $X(t_s)=\bar{X}(t_s)$. We know $X(t)$ contains no collision in $[t_s,t_2)$, thus we can apply the same criteria in Lemma \ref{CL5} to conclude 
\[X(t)=\bar{X}(t),\quad t\in [t_s,t_2).\]
Then by the continuity, we have $X(t_2)=\bar{X}(t_2)$, which means the second collision time and position of $\bar{X}$ are exactly the same as $X$. Therefore we conclude that 
\begin{equation}\label{C20}
X(t)=\bar{X}(t),\quad t\in [t_s,t_2].
\end{equation}

\noindent$\bullet$ (Step 3) Combining the analysis in (Step 1) and (Step 2), we prove the uniqueness of the solution to \eqref{B1} in $[0,t_2]$. Then we can start from $t_2$ and repeat the previous two steps to prove the uniqueness of the solution in $[t_i,t_{i+1}]$ for $i=2,3,\cdots,l-1$. Note that after the last collision time $t_{l-1}$, there will be no more collisions for $X(t)$ and thus $t_l=+\infty$. Finally, we combine all interval $I_\alpha$ together to claim the uniqueness of the solution to \eqref{B1} in the whole time and thus finish the proof. 

%
\end{proof}

\begin{remark}\label{R3.2}
If there exist i-th and j-th particles satisfying $\nu_i = \nu_j$, it is known from Lemma \ref{CL1} that i-th and j-th particles will collide once and after collision two particles will stick together. However, before $i$-th and $j$-th particles collide, we can apply Lemma \ref{CL6} to show that the system \eqref{A1}'s solution is unique. If $i$-th and $j$-th particles collide at $t_c$, we set $t_c$ as initial time. Then, we consider the uniqueness of the solution of the following system:
\begin{equation}\label{equal nu}
\begin{cases}
\displaystyle \dot{x}_k(t) = \nu_k + \frac{\kappa}{N} \sum_{l \ne i,j} \Psi(x_l(t) - x_k(t)) + \frac{2\kappa}{N} \Psi(x_i(t) - x_k(t)), \qquad k \ne i,j, \\
\displaystyle \dot{x}_i(t) = \nu_i + \frac{\kappa}{N} \sum_{l \ne j} \Psi(x_l(t) - x_i(t)), \\
x_j(t) = x_i(t).
\end{cases}
\end{equation}
In the sense that $N$ particles' dynamics are governed by system \eqref{equal nu}, we can repeat the process of analysis as previous to prove the uniqueness of the solution to the system \eqref{B1} and \eqref{B2}.
\end{remark}

\subsection{Conclusion} Now, we are ready to prove the main theorem in Section \ref{sec:3}. We will first provide the theorem for the first order system \eqref{B1} and \eqref{B2}. Then the results of the second order system \eqref{A1} will be provided as a corollary.  
\begin{theorem}\label{T3.1}
There exists a unique $C^1$ solution to the first order C-S model \eqref{B1} and \eqref{B2} with $0<\beta<1$. Moreover, assume that the natural velocities and initial data satisfy the properties below,
\[\sum_{i=1}^N x_i^0 = 0,\quad \sum_{i=1}^N \nu_i = 0.\]
Then, for any two particles such that $x_i^0<x_j^0$, the following assertions hold:
\begin{enumerate}
\item
If $\nu_i < \nu_j$, then $x_i$ and $x_j$ will never collide:
\[|\{t_* \in (0,+\infty): x_i(t_*) = x_j(t_*)\}| = 0. \]
\item
If $\nu_i > \nu_j$, then $x_i$ and $x_j$ will collide exactly once:
\[|\{t_* \in (0, +\infty) : x_i(t_*) = x_j(t_*)\}| = 1.\]
\item
If $\nu_i = \nu_j$, then $x_i$ and $x_j$ will collide in finite time. Moreover, assume that $x_i$ and $x_j$ collide at the instant $t_c$, then $x_i$ and $x_j$ will stick together after $t_c$:
\[x_i(t) = x_j(t), \qquad t \in [t_c, +\infty).\]
\item 
There exist positive constants $C'_{m_1}$ and $C'_{M_1}$ such that
\begin{equation*}
\left\{\begin{aligned}
& \lim_{t\rightarrow+\infty}|x_i(t)-x_j(t)|\geq C'_{m_1},\\
& |x_i(t)-x_j(t)|\leq C'_{M_1},\quad 1\leq i\neq j\leq N,\quad t\geq 0. 
\end{aligned}
\right.
\end{equation*}
\item
Unconditional flocking occurs and there exists an equilibrium state $X^\infty=(x_1^\infty,\cdots,x_N^\infty)$ such that 
\[|X(t)-X^{\infty}|\leq C e^{-\kappa \psi(C'_{M_1})t}.\]
\end{enumerate}
\end{theorem}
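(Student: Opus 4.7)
The plan is to synthesize the preceding six lemmas into the unified statement. For the existence of a $C^1$ solution to \eqref{B1}--\eqref{B2}, the interaction $\Psi(x)=\mathrm{sgn}(x)\frac{|x|^{1-\beta}}{1-\beta}$ with $0<\beta<1$ is continuous on all of $\mathbb{R}$, so the right-hand side of \eqref{B1} is continuous on $\mathbb{R}^N$ and Peano's theorem yields at least one global $C^1$ solution (globality following from the uniform upper bound on $D_x(t)$ established in Lemma \ref{CL3}, which prevents blow-up). For uniqueness, I would partition $\mathcal{N}$ into equivalence classes under $i\sim j\iff\nu_i=\nu_j$. By part (iii) of Lemma \ref{CL1}, each such class collapses to a single trajectory after finitely many sticking events; before the first collision of a solution, uniqueness is given by Lemma \ref{CL5}, and uniqueness past collision times (including the sticking merges governed by the reduced system in Remark \ref{R3.2}) is given by Lemma \ref{CL6}. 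Concatenating across the finite sequence of collision/sticking times yields uniqueness globally in time.

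Assertions (1)--(3) are immediate restatements of the trichotomy in Lemma \ref{CL1}, so nothing new is needed there. For (4), the upper bound $|x_i(t)-x_j(t)|\le C'_{M_1}$ is Lemma \ref{CL3}, whose proof uses only the concavity of $\Psi$ on $[0,\infty)$ and applies verbatim whether or not natural velocities are distinct. The lower bound requires a small modification since Lemma \ref{CL2} assumes $\nu_1<\cdots<\nu_N$ strictly: I would first reduce to one representative per $\sim$-equivalence class (after each class has stuck, which happens in finite time), then apply Lemma \ref{CL2} to the reduced system to obtain a uniform-in-$t$ lower bound on distances between representatives. Since the theorem states the lower bound only in the limit $t\to\infty$, this suffices.

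Assertion (5) follows by directly invoking Lemma \ref{CL4}, whose hypotheses $\sum x_i^0=\sum\nu_i=0$ match the standing assumption of the theorem. The equilibrium state $X^\infty$ is obtained by integrating the exponential velocity-diameter bound: the Cauchy criterion gives existence of $x_i^\infty=\lim_{t\to\infty}x_i(t)$, and sending the upper limit to infinity in the bound $|x_i(t_2)-x_i(t_1)|\le C\bigl(e^{-\kappa\psi(C'_{M_1})t_1}-e^{-\kappa\psi(C'_{M_1})t_2}\bigr)$ yields the stated exponential convergence rate.

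The main obstacle is the bookkeeping when some natural velocities coincide: the technical lemmas were written under strict ordering assumptions, so one must verify that reducing to equivalence classes (viewing each cluster of stuck particles as a single super-particle with an appropriately weighted coupling, as in the system displayed in Remark \ref{R3.2}) preserves both the structural hypotheses (odd, increasing, concave $\Psi$) and the zero-mean conditions on positions and natural velocities. Once this reduction is checked, the $C^1$ regularity transfers on each inter-collision interval $I_\alpha$, all preceding lemmas apply to the reduced system, and concatenation across the finitely many $I_\alpha$ finishes the proof.
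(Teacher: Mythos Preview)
Your proposal is correct and follows essentially the same route as the paper: the paper's proof is a one-paragraph synthesis citing Lemma~\ref{CL1} for assertions (1)--(3), Lemmas~\ref{CL2}--\ref{CL3} for (4), Lemma~\ref{CL4} for (5), and Lemmas~\ref{CL5}--\ref{CL6} together with Remark~\ref{R3.2} for uniqueness. You are in fact more careful than the paper on one point: the paper invokes Lemmas~\ref{CL2} and \ref{CL4} directly even though those lemmas carry the strict hypothesis $\nu_1<\cdots<\nu_N$, whereas you explicitly note the need to pass to $\sim$-equivalence classes (stuck super-particles) before applying them, which is the right fix.
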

\begin{proof}
The uniqueness follows Lemma \ref{CL5}, Lemma \ref{CL6} and Remark \ref{R3.2}. The assertions $(1)-(3)$ are proved in Lemma \ref{CL1}. The assertion $(4)$ is derived from Lemma \ref{CL2} and Lemma \ref{CL3}. Finally, the assertion $(5)$ follows from Lemma \ref{CL4}. 
\end{proof}
\begin{figure}[t]
\centering
\mbox{
\subfigure[Collision and flocking]{\includegraphics[width=0.46\textwidth]{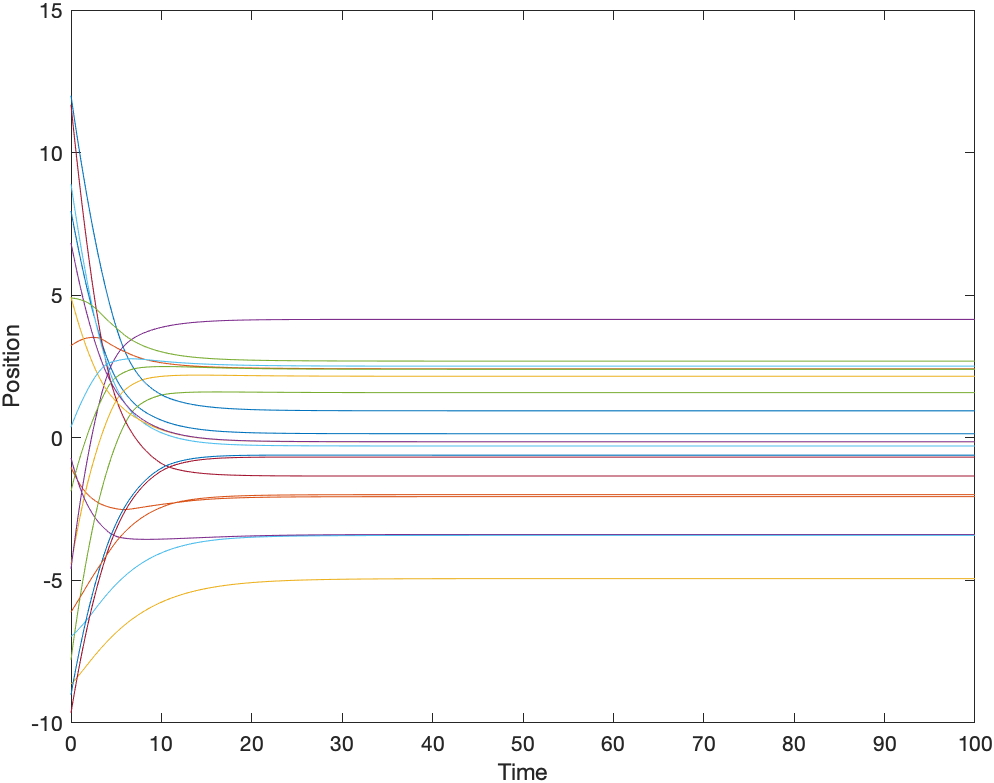}
\label{Fig1:sin}}
}
\centering
\mbox{
\subfigure[Partial sticking]{\includegraphics[width=0.46\textwidth]{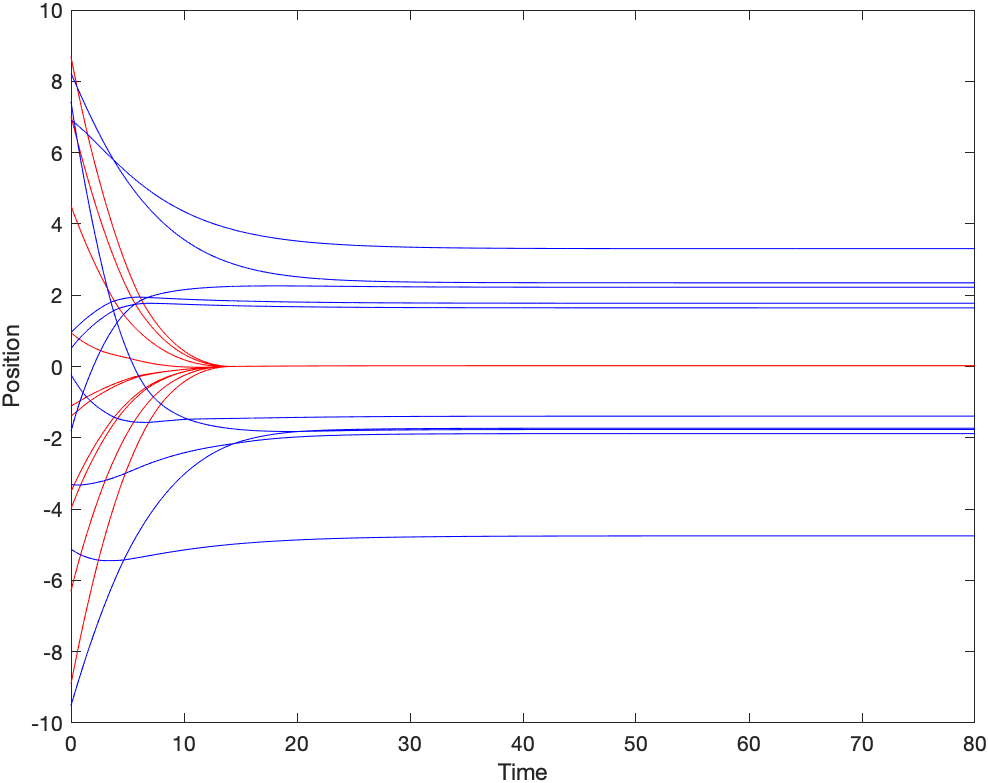}
\label{Fig1:partial}}
\hspace{0.1cm}
\subfigure[Full sticking]{\includegraphics[width=0.46\textwidth]{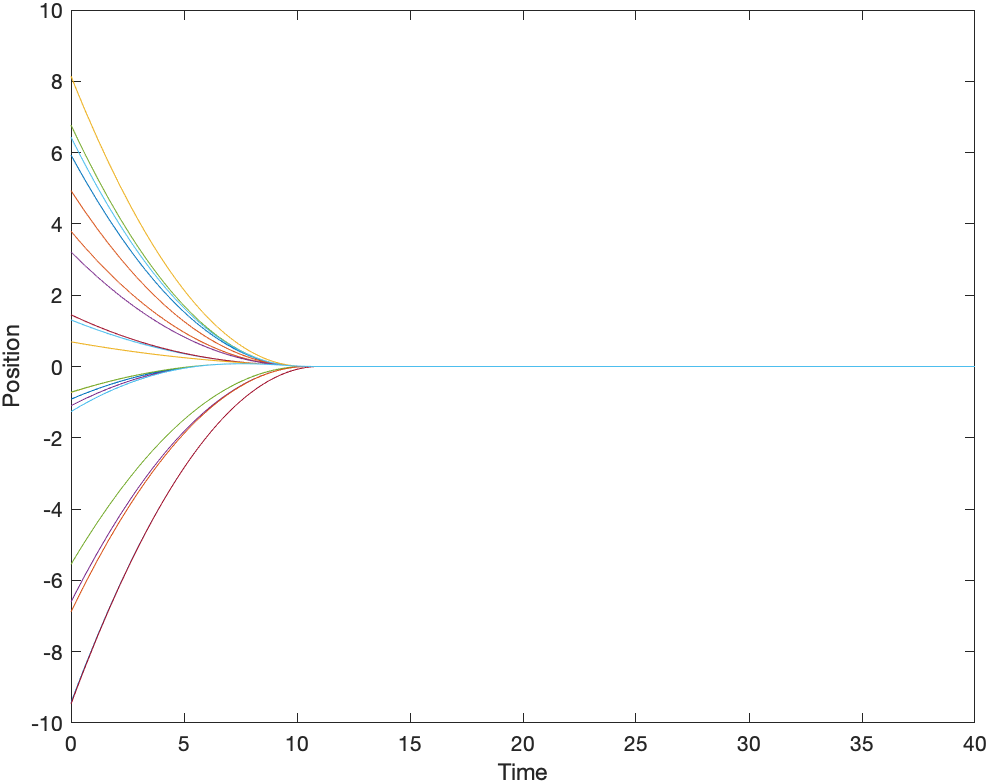}
\label{Fig1:stick}}
}
\centering\caption{Asymptotical behavior when $\beta<1$}
\label{Fig1}
\end{figure}
In Figure \ref{Fig1:sin}, we can see clearly the collision behavior in the initial layer and the asymptotical flocking emergence. In Figure \ref{Fig1:partial}, the red curves shows the trajectories of particles with same natural velocities, and thus they stick together after collisions. Figure \ref{Fig1:stick} shows the full sticking and finite time flocking emergence when all particles are of identical natural velocities.

\begin{corollary}\label{C3.1}
For generic initial data $(X^0, V^0)$, there exists a unique solution $(X(t),V(t))\in (C^1([0,T])^N, C([0,T])^N)$ to the one dimensional second order C-S model \eqref{A1} with $0<\beta<1$. Moreover, assume that the initial data satisfy the properties below,
\[\sum_{i=1}^N x_i^0 = 0,\quad \sum_{i=1}^N v_i = 0.\]
Then all the assertions in Theorem \ref{T3.1} still hold. 
\end{corollary}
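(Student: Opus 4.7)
The plan is to bootstrap this corollary directly from Theorem \ref{T3.1} via the first order reduction of Section \ref{sec:2.1} together with the equivalence established in Lemma \ref{L2.1}. The first step is to translate the zero-mean hypotheses on $(X^0, V^0)$ into the corresponding hypotheses on the first order natural velocities $\nu_i$ defined in \eqref{B1}. The position condition $\sum_i x_i^0 = 0$ is unchanged. For the velocity side, the oddness of $\Psi$ in \eqref{B2} gives, by swapping the summation indices $(i,k)$,
\[
\sum_{i=1}^N\sum_{k=1}^N \Psi(x_k^0 - x_i^0) = 0,
\]
so that $\sum_i \nu_i = \sum_i v_i^0 - \frac{\kappa}{N}\cdot 0 = 0$ as required.

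With these hypotheses in hand, Theorem \ref{T3.1} yields a unique $C^1$ solution $X(t)$ to the first order system \eqref{B1}-\eqref{B2} satisfying all five assertions $(1)$-$(5)$. By Lemma \ref{L2.1}, this $X(t)$ corresponds bijectively with a weak solution $(X,V)$ of \eqref{A1} in the sense of Definition \ref{D2.1}, with velocity reconstructed componentwise via
\[
v_i(t) = \dot x_i(t) = \nu_i + \frac{\kappa}{N}\sum_{k=1}^N \Psi(x_k(t) - x_i(t)).
\]
Because $\beta<1$, the function $\Psi$ is continuous on all of $\mathbb{R}$ with $\Psi(0)=0$, so this expression is continuous in $t$; hence $V \in (C([0,T]))^N$. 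Uniqueness of the $V$-component is then immediate from uniqueness of $X$ together with the same formula. Since assertions $(1)$-$(5)$ of Theorem \ref{T3.1} are statements purely about the spatial trajectory $X(t)$, they transfer verbatim to $(X,V)$.

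The main obstacle, which is more delicate than it first appears, is verifying that the $X$ produced from the first order system actually meets the regularity demanded by Definition \ref{D2.1}: namely, $X \in (C^1([0,T]))^N$ globally on $[0,T]$ and $X \in (W^{2,1}([T_n,t]))^N$ between consecutive collision times $T_n$. The global $C^1$ regularity is automatic from \eqref{B1} since the right-hand side is continuous in $X$. The $W^{2,1}$ smoothness on each $(T_n, T_{n+1})$ requires that the singular weight $\psi$ never be evaluated at $0$ on such an interval, which by the very definition of $\{T_n\}$ as the collision set is guaranteed: on each $(T_n, T_{n+1})$ the interparticle distances stay strictly positive, the vector field of the original second order system \eqref{A1} is analytic, and $V = \dot X$ is therefore absolutely continuous there with locally bounded derivative, yielding the $W^{2,1}$ regularity. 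Combining this with the finiteness of collisions furnished by Lemma \ref{CL1} (a finite disjoint union of inter-collision intervals exhausts $[0,+\infty)$) completes the identification with Definition \ref{D2.1} and hence the proof.
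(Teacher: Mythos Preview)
Your proposal is correct and follows precisely the same route as the paper's one-line proof, which simply invokes the first order reduction of Section~\ref{sec:2.1} and Lemma~\ref{L2.1}. You have merely unpacked the details that the paper leaves implicit: the verification that $\sum_i \nu_i = 0$ via the oddness of $\Psi$, the reconstruction of $V$ from $X$, and the check that the regularity in Definition~\ref{D2.1} is met between and across collision times.
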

\begin{proof}
The proof directly follows from the first order reduction in Section \ref{sec:2.1} and Lemma \ref{L2.1}.\newline
\end{proof}
\vspace{0.3cm}
\section{Higher order singularity}\label{sec:4}
\setcounter{equation}{0}
\vspace{0.3cm}
In this section, we discuss the Cucker-Smale system with singular communication weight $\psi(r) =  \frac{1}{|r|^\beta}$ and $\beta > 1$. In this case, we will first construct an uniform-in-time lower bound of the distance between any two particles. Next, as $\beta>1$, the interaction is short range due to the integrability of $\psi$ at infinity. Therefore, the emergence of mono-cluster flocking depends on the initial configuration and multi-cluster formation may occur. Similar as in \cite{H-K-P-Z}, we will construct the critical value $\kappa_c$ so that mono-cluster formation will emerge if and only if $\kappa>\kappa_c$ and multi-cluster formation will emerge if and only if $\kappa\leq \kappa_c$.  
\subsection{Uniform lower bound}\label{sec:4.1}
In this subsection, we will show the uniform-in-time lower bound of the distance between particles. In \cite{C-C-M-P}, the authors have already proved the non-existence of collisions when $\beta>1$, therefore the global existence and uniqueness of the solution are guaranteed by the ODE theory. Moreover, the order of particles will never change and thus, without loss of generality, we may assume the order of particles as below
\[x_1(t) < x_2(t) < \cdots < x_N(t), \qquad  \forall t \ge 0.\]
We denote $D_{ij}=|x_i-x_j|$ and thus there are $ \frac{N(N-1)}{2}$ quantities $D_{ij}$. Therefore, we may order $D_{ij}$ in any fixed $t$ and have 
\[D_1(t) \le D_2(t) \le \ldots \le D_\frac{N(N-1)}{2}(t),\]
where $D_{l}(t) = |x_j(t) - x_i(t)|$ for some $i, j \in \mathcal{N} = \{1,2, \ldots, N\}, l = 1,2,\ldots, \frac{N(N-1)}{2}.$
According to the analyticity of the solution $x_i$, we have $D_i(t)$ is Lipschitz continuous with respect to $t$. Moreover, we can separate the time line to be an at most countable union of sets with the properties below 
\begin{equation}\label{D1}
\left\{\begin{aligned}
&[0, +\infty) = \cup_\alpha \bar{I}_\alpha,\quad I_{\alpha}=(t_{\alpha-1},t_{\alpha}),\quad t_0=0,\quad \alpha=1,2\cdots,\\
&D_1(t) < D_2(t) < \ldots < D_\frac{N(N-1)}{2}(t),\quad t\in I_\alpha.
\end{aligned}
\right.
\end{equation}
 Now we state our first theorem in this section which is corresponding to the uniform-in-time lower bound. This is a more precise description of the avoidance of collision.
\begin{theorem}\label{TD1}
Let $X$ be a solution to system \eqref{B5} with initial configuration $X^0$ and natural velocities $\nu_i$ satisfying the following properties
\[\sum_{i=1}^N x_i^0 = 0,\quad \sum_{i=1}^N \nu_i = 0,\quad x_i^0\neq x_j^0,\quad 1\leq i\neq j\leq N.\]
Then for $\beta>1$, there exists a positive constant $C_m$ such that the minimum distance of any two particles is uniformly bounded from below by $C_m$ i.e.
\[D_1(t)=\min_{i,j}|x_i(t) - x_j(t)|\geq C_m,\quad t\geq 0.\]
\end{theorem}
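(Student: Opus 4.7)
The plan is to analyze the evolution of $D_1(t)$ on each of the intervals $I_\alpha$ in \eqref{D1}, on which the ordering of the pairwise gaps is preserved so that $D_1(t) = x_{i+1}(t) - x_i(t)$ for a fixed consecutive pair $(i,i+1)$. Differentiating along \eqref{B5} and isolating the self-interaction of the close pair from the rest yields
\begin{equation*}
\dot D_1(t) = (\nu_{i+1}-\nu_i) - \frac{2\kappa}{N}\Phi(D_1(t)) + \frac{\kappa}{N}\sum_{k\neq i,i+1}\bigl[\Phi(x_k(t)-x_{i+1}(t))-\Phi(x_k(t)-x_i(t))\bigr].
\end{equation*}
The diagonal contribution $-\frac{2\kappa}{N}\Phi(D_1) = \frac{2\kappa}{N(\beta-1)}(D_1^{1-\beta}-1)$ blows up like $D_1^{1-\beta}$ as $D_1 \to 0^+$, while the off-diagonal sum, call it $S(t)$, is nonpositive: $\Phi$ is increasing on each half-line, and since particles are ordered and no $x_k$ lies strictly between $x_i$ and $x_{i+1}$, every summand has a fixed sign.

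The crucial step is to estimate $-2\Phi(D_1) + S$ from below. Writing $x_k - x_{i+1} = s_k D_1$ for $k>i+1$ and $x_i - x_k = t_k D_1$ for $k<i$, the minimality of $D_1$ among all consecutive gaps forces $s_k \ge k-i-1 \ge 1$ and $t_k \ge i-k \ge 1$. A direct manipulation using $\Phi(y) = \mathrm{sgn}(y)(\beta-1)^{-1}(1-|y|^{1-\beta})$ then gives
\begin{equation*}
-2\Phi(D_1) + S = \frac{D_1^{1-\beta}}{\beta-1}\Bigl\{2 - \sum_{k>i+1}\bigl[s_k^{1-\beta}-(s_k+1)^{1-\beta}\bigr] - \sum_{k<i}\bigl[t_k^{1-\beta}-(t_k+1)^{1-\beta}\bigr]\Bigr\} - \frac{2}{\beta-1}.
\end{equation*}
The function $s \mapsto s^{1-\beta}-(s+1)^{1-\beta}$ is positive and decreasing on $[1,\infty)$, so monotonicity in $s_k,t_k$ together with the telescoping identity $\sum_{m=1}^{M}[m^{1-\beta}-(m+1)^{1-\beta}] = 1-(M+1)^{1-\beta}$ bounds each inner sum by $1-(N-i)^{1-\beta}$ and $1-i^{1-\beta}$ respectively. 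Hence the curly bracket is at least $i^{1-\beta}+(N-i)^{1-\beta}>0$, and
\begin{equation*}
\dot D_1 \ge -D_\nu - \frac{2\kappa}{N(\beta-1)} + \frac{\kappa\bigl[i^{1-\beta}+(N-i)^{1-\beta}\bigr]}{N(\beta-1)}\, D_1^{1-\beta},
\end{equation*}
which is strictly positive whenever $D_1$ lies below a threshold $\varepsilon^*(N,\beta,\kappa,D_\nu,i)>0$. Thus $D_1$ cannot cross $\varepsilon^*$ from above on $I_\alpha$; by continuity of $D_1$ across the endpoints of the intervals and taking the minimum of $\varepsilon^*$ over all admissible indices $i \in \{1,\dots,N-1\}$, one arrives at $D_1(t) \ge C_m := \min\{D_1(0), \min_i \varepsilon^*_i\}>0$ for every $t\ge 0$, which also explains the $N$-dependence of $C_m$ claimed in the theorem.

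The main obstacle is that the naive bound $|S| \lesssim D_1^{1-\beta}\sum_{m \ge 1} m^{-\beta}$ obtained by the mean value theorem and the minimality $|x_k - x_i|,\,|x_k - x_{i+1}| \ge m D_1$ has coefficient $2/(\beta-1) - 2\sum_{m \ge 1} m^{-\beta}$ that can fail to be positive (for instance already at $\beta=2$). Since $S$ and $-2\Phi(D_1)$ scale at exactly the same rate $D_1^{1-\beta}$ as $D_1 \to 0$, one cannot afford to bound them separately; the cancellation has to be made exact by writing out the sum in the scaled form above and using the telescoping identity to expose the residual factor $i^{1-\beta}+(N-i)^{1-\beta}$, which is strictly positive for every fixed $i\in\{1,\dots,N-1\}$.
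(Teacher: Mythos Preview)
Your argument is correct and takes a genuinely different route from the paper's proof.

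The paper proceeds by \emph{downward induction on the rank of the gap}: Lemma~\ref{LD1} first bounds the diameter $D_{N(N-1)/2}=D_x$ from below, and Lemma~\ref{LD2} then shows that a uniform lower bound on all $D_q$ with $q>p$ yields one on $D_p$. The inductive step only uses qualitative features of $\Phi$ (monotonicity, $\Phi(0^+)=-\infty$, boundedness on $[C_1,\infty)$), and the case analysis via the set $\mathcal{C}_{ij}$ handles the ``far'' particles through the previously obtained constant $C_1$. The price is that the constant degrades at each of the $\tfrac{N(N-1)}{2}$ steps, which is why Remark~\ref{R4.2} ends up with the very pessimistic bound $C_m\gtrsim\exp\!\bigl(-N^2\log N/(\beta-1)\bigr)$.

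Your approach instead works directly on $D_1$ and exploits the explicit power-law form of $\Phi$. The key new idea is the scaling $x_k-x_{i+1}=s_kD_1$ together with the lower bound $s_k\ge k-i-1$ coming from minimality of $D_1$, which turns the off-diagonal sum $S$ into a telescoping expression and exposes the residual positive factor $i^{1-\beta}+(N-i)^{1-\beta}$. This is sharper: your threshold $\varepsilon^*_i$ satisfies
\[
(\varepsilon^*_i)^{\,1-\beta}=\frac{N(\beta-1)D_\nu+2\kappa}{\kappa\bigl[i^{1-\beta}+(N-i)^{1-\beta}\bigr]},
\]
so $\min_i\varepsilon^*_i$ decays only like $N^{-\beta/(\beta-1)}$, a vast improvement over the paper's bound. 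On the other hand, your computation is tied to the exact form $\Phi(x)=\mathrm{sgn}(x)(\beta-1)^{-1}(1-|x|^{1-\beta})$, whereas the paper's induction would transfer verbatim to any $\Phi$ with the same qualitative behaviour (e.g.\ the logarithmic potential at $\beta=1$). Both proofs rely on the same decomposition of $[0,\infty)$ into the intervals $I_\alpha$ from~\eqref{D1} and the continuity of $D_1$ to propagate the barrier across endpoints; your treatment of that step is brief but standard.
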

\begin{remark}
When $\beta\geq 1$, the communication function $\Phi$ tends to infinity around origin. Therefore, the method in Lemma \ref{CL2} cannot be applied to yield the lower bound.
\end{remark}
\noindent We will apply the inductive method to prove Theorem \ref{TD1}. Now, as collision does not exist, we immediately conclude that $D_\frac{N(N-1)}{2}(t)=x_N(t) - x_1(t)=D_x(t)$. In the following lemma, we will construct the uniform-in-time lower bound for the diameter $D_x(t)$, which is the first step of the induction.
\begin{lemma}\label{LD1}
Let $X$ be a solution to system \eqref{B5} with initial configuration $X^0$ and natural velocities $\nu_i$ satisfying the following properties
\[\sum_{i=1}^N x_i^0 = 0,\quad \sum_{i=1}^N \nu_i = 0,\quad x_i^0\neq x_j^0,\quad 1\leq i\neq j\leq N.\]
When $\beta>1$, the diameter of position $D_x(t)$ has uniform-in-time lower bound. More precisely, there exists a positive constant $C_L$ such that 
\[D_x(t) \ge C_L, \quad C_L=\min\{1,\ D_x(0),\ \Phi^{-1}(- \frac{N D_{\nu}}{2\kappa})\} \qquad t \ge 0,\]
where $\Phi^{-1}$ is the inverse function of $\Phi(x)$ for $x>0$.
\end{lemma}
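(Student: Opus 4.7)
The plan is to derive a differential inequality for $D_x(t) = x_N(t) - x_1(t)$ that forbids it from decreasing below the stated threshold, exploiting the blow-up $\Phi(r)\to -\infty$ as $r\to 0^{+}$ to dominate the spread $D_\nu$ of natural velocities. Since collisions are ruled out in the regime $\beta>1$ by \cite{C-C-M-P}, the ordering $x_1(t) < \cdots < x_N(t)$ is preserved for all $t\ge 0$, so $D_x$ is smooth and may be differentiated directly through \eqref{B5}.

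First I would compute
\[
\dot{D}_x(t) = (\nu_N - \nu_1) + \frac{\kappa}{N}\sum_{k\ne N}\Phi(x_k - x_N) - \frac{\kappa}{N}\sum_{k\ne 1}\Phi(x_k - x_1),
\]
then use the odd symmetry of $\Phi$ on $(-\infty,0)\cup(0,\infty)$ to recast each summand so that its argument is positive. Isolating the extremal pair $(1,N)$ separates off a contribution of $-\tfrac{2\kappa}{N}\Phi(D_x)$: the $k=1$ summand of the first sum equals $-\Phi(D_x)$ and the $k=N$ summand of the second equals $\Phi(D_x)$. For the intermediate indices $2\le k\le N-1$, the differences $x_N - x_k$ and $x_k - x_1$ both lie in $(0, D_x]$; whenever $D_x\le 1$, these arguments sit in $(0,1]$, a range on which $\Phi\le 0$. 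Consequently the intermediate summands contribute with a favorable sign in both sums and may be discarded in a lower bound. Combining this with $\nu_N - \nu_1 \ge -D_\nu$ would yield
\[
\dot{D}_x(t) \ge -D_\nu - \frac{2\kappa}{N}\Phi(D_x(t)) \qquad \text{whenever } D_x(t)\le 1.
\]

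To close the argument, I would note that $\Phi$ is strictly increasing on $(0,\infty)$ with range $(-\infty,(\beta-1)^{-1})$, so $\Phi^{-1}$ is well-defined at $-ND_\nu/(2\kappa)\le 0$, and the right-hand side above is nonnegative precisely when $D_x\le \Phi^{-1}(-ND_\nu/(2\kappa))$. Setting $r_* := \min\{1,\Phi^{-1}(-ND_\nu/(2\kappa))\}$, a standard continuity/barrier argument then shows that $D_x$ cannot cross the level $\min\{D_x(0), r_*\} = C_L$ downward, producing the claimed uniform-in-time lower bound.

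The main obstacle, and what distinguishes this case from the long-range analysis in Lemma \ref{CL2}, is the sign analysis of the non-extremal terms: $\Phi$ is neither concave nor monotonically signed in the useful direction, it changes sign at $r=1$ on the positive axis and jumps across the origin, so the concavity-based comparison used for $\beta<1$ is unavailable here. Restricting to the small-diameter regime $D_x\le 1$ is precisely what places every intermediate argument into the half-axis where $\Phi\le 0$, so that the interactions aid rather than obstruct the desired differential inequality.
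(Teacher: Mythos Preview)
Your proposal is correct and follows essentially the same approach as the paper: derive the differential inequality $\dot D_x \ge -D_\nu - \tfrac{2\kappa}{N}\Phi(D_x)$ in the regime $D_x\le 1$ by observing that the intermediate terms $\Phi(x_N-x_k)$ and $\Phi(x_k-x_1)$ are nonpositive there, then use a barrier/contradiction argument to prevent $D_x$ from dropping below $C_L$. The paper additionally frames the argument within the intervals $I_\alpha$ from \eqref{D1} and inducts over them, but for this lemma that extra structure is inessential since the diameter is always realized by the fixed pair $(1,N)$; your direct treatment is slightly cleaner.
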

\begin{proof}
We first study the dynamics of the diameter in $I_1$. If $D_x(t)\geq 1$ for all $t\in I_1$, then, by the definition of $C_L$,  we have $D_x(t)\geq C_L$ for $t\in I_1$. On the other hand, if $D_x(t)<1$ for some $t_0\in I_1$, we will prove $D_x(t_0)\geq C_L$ by contradiction. Suppose not, then we have $D_x(t_0)<C_L$. We immediately obtain that $t_0>0$ because of the fact $D_x(t_0)< C_L\leq D_x(0)$. Therefore, due to the continuity of the $D_x(t)$, we can find at least one time $t_1$ such that
\[0\leq t_1<t_0,\quad D_x(t_1)= C_L.\]
Next, the existence of $t_1$ allows us to define $t^*=\sup\{s\ |\ 0\leq s<t_0,\quad D_x(s)= C_L\}$. Then, in the interval $I_{t_0}=[t^*,t_0]$, we have the following properties
\begin{equation}\label{D2}
\left\{\begin{aligned}
&D_x(t^*)=C_L,\\
& D_x(t)< C_L\leq 1,\quad t^*<t\leq t_0.
\end{aligned}
\right.
\end{equation}
According to above analysis, the dynamics of the diameter $D_x(t)$ is governed by the differential equation below:  
\begin{align}\label{D3}
\begin{aligned}
&\frac{d}{dt}(D_x(t))=\dot{x}_N(t) - \dot{x}_1(t) \\
&= \nu_N + \frac{\kappa}{N} \underset{j \ne N}{\sum_{j =1}^N} \Phi(x_j(t) - x_N(t)) -\nu_1 - \frac{\kappa}{N} \underset{j \ne 1}{\sum_{j =1}^N} \Phi(x_j(t) - x_1(t)) \\
&= \nu_N - \nu_1 - \frac{2\kappa}{N} \Phi(x_N(t) - x_1(t)) - \frac{\kappa}{N} \sum_{j=2}^{N-1} [\Phi(x_N(t) -x_j(t)) + \Phi(x_j(t) - x_1(t))].
\end{aligned}
\end{align}
Next, according to \eqref{D2}, we know the diameter $x_N(t) - x_1(t) \leq 1$ in the interval $I_{t_0}$. Therefore we obtain
\[0 < x_N(t) - x_j(t) \le 1, \quad 0 < x_j(t) - x_1(t) \le 1, \qquad j =2,3,\ldots,N-1.\]
Then, we combine above inequalities and the definition of $\Phi$ in \eqref{B3} to obtain
\begin{equation}\label{D4}
\Phi(x_N(t) - x_j(t)) \le 0, \quad \Phi(x_j(t) - x_1(t)) \le 0, \qquad j =2,3,\ldots,N-1.
\end{equation}
Now, we combine \eqref{D3} and \eqref{D4} to obtain the differential inequality of the diameter as below
\begin{equation}\label{D5}
\dot{D}_X(t) \ge - D_{\nu} - \frac{2\kappa}{N} \Phi(x_N(t) - x_1(t))=- D_{\nu} - \frac{2\kappa}{N} \Phi(D_x(t)),\quad t\in I_{t_0}.
\end{equation}
According to \eqref{D2} and the definition of $C_L$, we know the diameter $D_x(t)\leq C_L\leq \Phi^{-1}(- \frac{N D_{\nu}}{2\kappa})$ in the time interval $I_{t_0}$. Then, as $\Phi$ in \eqref{B3} is monotonic increasing on the positive real line and tends to negative infinity around origin, we can obtain 
\[\dot{D}_X(t) \ge- D_{\nu} - \frac{2\kappa}{N} \Phi(D_x(t))\geq 0,\quad t\in I_{t_0}.\]
 Integrate above differential inequality on both sides and we obtain $D_x(t_0)\geq D_x(t^*)=C_L$, which obviously contradicts to \eqref{D2}. Thus we obtain $D_x(t_0)\geq C_L$. Due to the arbitrary choice of $t_0$, we conclude that $D_x(t)\geq C_L$ in the whole interval $I_1$. Now suppose $D_x(t)\geq C_L$ in $I_\alpha$, then we have $D_x(t_\alpha)\geq C_L$, where $t_\alpha$ is the end point of $I_\alpha$. Moreover, by the definition of $I_{\alpha}$, we know $t_{\alpha}$ is the start point of $I_{\alpha+1}$, and  thus we can apply the same argument as above to conclude 
 \[D_x(t)\geq \min\{1,\ D_x(t_\alpha),\ \Phi^{-1}(- \frac{N D_{\nu}}{2\kappa})\}\geq  \min\{1,\ C_L,\ \Phi^{-1}(- \frac{N D_{\nu}}{2\kappa})\} = C_L,\quad t\in I_{\alpha+1},\]
 where the last equality holds due to the definition of $C_L.$
Therefore, we apply the induction principle to conclude that $D_x(t)\geq C_L$ for all $t\geq 0$.
\end{proof}
\vspace{0.2cm}
Next, we will inductively prove the existence of uniform-in-time lower bound for any $D_p$.

\begin{lemma}\label{LD2}
Let $X$ be a solution to system \eqref{B5} with initial configuration $X^0$ and natural velocities $\nu_i$ satisfying the following properties
\[\sum_{i=1}^N x_i^0 = 0,\quad \sum_{i=1}^N \nu_i = 0,\quad x_i^0\neq x_j^0,\quad 1\leq i\neq j\leq N.\] 
For $\beta>1$ and a given integer $p$ such that $0<p<\frac{N(N-1)}{2}$, we suppose there exists a positive constant $C_1 > 0$ such that $D_q \ge C_1$ for every $q > p$ and any $t\geq 0$. Then there exists a positive constant $C_2 > 0$ such that 
\[C_2 = \min \Big\{ \Phi^{-1} \Big(-\frac{N}{2\kappa}[2 \kappa \max \{|\Phi(C_1)|, |\Phi(+\infty)|\} + D_{\nu}]\Big), 1, D_p(0),\frac{C_1}{2}\Big\},\quad D_p(t) \ge C_2,\quad t\geq 0,\]
where $\Phi^{-1}$ is the inverse function of $\Phi(x)$ for $x>0$.
 \end{lemma}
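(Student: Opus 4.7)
The plan is to mirror the induction used in Lemma \ref{LD1}, but now across the intervals $I_\alpha$ defined in \eqref{D1}. Within a fixed $I_\alpha$ the ordering $D_1 < D_2 < \cdots < D_{N(N-1)/2}$ is strict, so the pair $(i,j)$ realizing $D_p$ is uniquely determined and $D_p$ is analytic there. In the base interval $I_1$ I argue by contradiction: if some $t_0 \in I_1$ has $D_p(t_0) < C_2$, then since $D_p(0) \geq C_2$ continuity produces a latest $t^* < t_0$ with $D_p(t^*) = C_2$ and $D_p(t) < C_2 \leq \min\{1, C_1/2\}$ on $(t^*, t_0]$. Establishing $\dot{D}_p(t) \geq 0$ on this subinterval yields the contradiction $D_p(t_0) \geq D_p(t^*) = C_2$, and the same argument propagates across each subsequent $I_\alpha$ once $D_p(t_{\alpha-1}) \geq C_2$ is inherited by continuity.

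The geometric heart of the estimate is a cluster decomposition valid whenever $D_p < C_1/2$. By the inductive hypothesis every $D_q$ with $q > p$ is at least $C_1$, so every pair distance is either strictly less than $C_1/2$ or at least $C_1$; none lies in the forbidden band $[C_1/2, C_1)$. The triangle inequality combined with this dichotomy renders ``closeness'' transitive and partitions the indices into clusters $B_1, B_2, \ldots$ with intra-cluster distances $< C_1/2$ and inter-cluster distances $\geq C_1$. Since $|x_i - x_j| = D_p < C_1/2$, the indices $i, j$ lie in a common cluster $B_1$. Any distance realized inside $B_1$ is $< C_1/2$, hence corresponds to a $D_m$ with $m \leq p$; on the other hand, the sub-diameter of $B_1$ dominates $D_p$, so the sub-diameter equals $D_p$ exactly and $x_i, x_j$ are the leftmost and rightmost particles of $B_1$.

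With this in hand I split
\[
\dot{D}_p = (\nu_j - \nu_i) - \frac{2\kappa}{N}\Phi(D_p) + \frac{\kappa}{N}\sum_{k \ne i,j}\bigl[\Phi(x_k - x_j) - \Phi(x_k - x_i)\bigr]
\]
according to whether $k \in B_1 \setminus \{i,j\}$ or $k \notin B_1$. For the former, $x_i < x_k < x_j$ with $x_j - x_k, x_k - x_i < C_1/2 < 1$, so $\Phi(x_k - x_j) > 0$ and $\Phi(x_k - x_i) < 0$, making each such term strictly positive. For the latter, $|x_k - x_i|, |x_k - x_j| \geq C_1$ forces $\Phi(x_k - x_i), \Phi(x_k - x_j) \in [\Phi(C_1), \Phi(+\infty)]$, so each term is bounded below by $-(\Phi(+\infty) - \Phi(C_1)) \geq -2\max\{|\Phi(C_1)|, |\Phi(+\infty)|\}$. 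Combining with $\nu_j - \nu_i \geq -D_\nu$ yields
\[
\dot{D}_p \;\geq\; -D_\nu - 2\kappa\max\{|\Phi(C_1)|,|\Phi(+\infty)|\} - \frac{2\kappa}{N}\Phi(D_p),
\]
and the constraint $D_p \leq C_2 \leq \Phi^{-1}\!\bigl(-\tfrac{N}{2\kappa}[2\kappa\max\{|\Phi(C_1)|,|\Phi(+\infty)|\} + D_\nu]\bigr)$ together with the monotonicity of $\Phi$ on $(0,\infty)$ forces $\dot D_p \geq 0$, completing the contradiction.

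The main obstacle I expect is the rigorous justification of the cluster decomposition and, in particular, the identification of the sub-diameter of $B_1$ with $D_p$; this picture rests crucially on the gap $C_2 \leq C_1/2$, on the inductive hypothesis $D_q \geq C_1$ for $q > p$, and on the strict ordering of the $D_q$'s inside $I_\alpha$ provided by \eqref{D1}. Once this geometric step is in place the differential inequality is forced, and the explicit form of $C_2$ in the statement is exactly what is needed to absorb the natural-velocity diameter $D_\nu$ and the far-particle error $2\kappa\max\{|\Phi(C_1)|, |\Phi(+\infty)|\}$ against the strongly negative singular term $-\tfrac{2\kappa}{N}\Phi(D_p)$.
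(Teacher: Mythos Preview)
Your proposal is correct and follows essentially the same route as the paper. The paper organizes the argument as a four-case analysis based on the set $\mathcal{C}_{ij} = \{n : \exists\, m,\ |x_n - x_m| > D_p\}$; Cases 1, 2, and 4(i) yield $D_p \ge C_1/2$ directly by the triangle inequality, while Case 4(ii) introduces sets $A$ and $B$ and runs the differential-inequality contradiction you describe. Your cluster decomposition under the hypothesis $D_p < C_2 \le C_1/2$ collapses those preliminary cases (they cannot occur once $D_p < C_1/2$) and lands you exactly in Case 4(ii): your $B_1 \setminus \{i,j\}$ is the paper's $B$, your $B_1^c$ is the paper's $A$, and the resulting inequality $\dot D_p \ge -D_\nu - 2\kappa\max\{|\Phi(C_1)|,|\Phi(+\infty)|\} - \tfrac{2\kappa}{N}\Phi(D_p)$ is identical to the paper's \eqref{D18}. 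One small sloppiness: for $k \notin B_1$ with $x_k < x_i$ the values $\Phi(x_k - x_i),\Phi(x_k - x_j)$ lie in $[-\Phi(+\infty),-\Phi(C_1)]$ rather than $[\Phi(C_1),\Phi(+\infty)]$, but the bound $|\Phi(\cdot)| \le \max\{|\Phi(C_1)|,|\Phi(+\infty)|\}$ and hence your final estimate still hold.
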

\begin{proof}
Let's first study the situation in the interval $I_1$. Then, by the definition of $I_1$, there exist indexes $i,j \in \mathcal{N}$ such that
\[ D_p(t) = |x_i(t) - x_j(t)|,\quad t\in I_1.\]
In Lemma \ref{LD1}, we already show the existence of the lower bound of the diameter of spatial variable. Therefore, without loss of generality, we may assume $x_i(t) < x_j(t), t\in I_1$. Then, according to \eqref{D1}, we can construct a set
\[\mathcal{C}_{ij} = \{n \in \mathcal{N} \, \vert \, \exists \  m \in \mathcal{N}, s.t., |x_n(t) - x_m(t)| > D_p(t), t \in I_1 \}.\]

\noindent$\bullet$ (Case 1): $i \notin \mathcal{C}_{ij}$ and $j \in \mathcal{C}_{ij}$. In this case, we apply the property that $i \notin \mathcal{C}_{ij}$ to obtain for any $m \in \mathcal{N}$ that,
\begin{equation}\label{D6}
|x_i(t) - x_m(t)| \le D_p(t) = |x_i(t) -x_j(t)|, \quad t \in I_1.
\end{equation}
While due to $j \in \mathcal{C}_{ij}$, we obtain that there exists an integer $m_j \in \mathcal{N}$ such that 
\begin{equation}\label{D7} 
|x_j(t) - x_{m_j}(t)| > D_p(t) = |x_i(t) - x_j(t)|, \quad t \in I_1.
\end{equation}
Therefore, according to \eqref{D1}, we can find one integer $q_{j,m_j} > p$ such that $D_{q_{j,m_j}}(t) = |x_j(t) - x_{m_j}(t)| $. Then by the condition in the statement of Lemma \ref{LD2}, we use \eqref{D7} to obtain that $D_{q_{j,m_j}}(t)\geq C_1$. Thus, we can apply the triangle inequality, \eqref{D6} and \eqref{D7} to have 
\[C_1 \le |x_j(t) - x_{m_j}(t)| \le |x_j(t) - x_i(t)| + |x_i(t) - x_{m_j}(t)| \le 2D_p(t),\quad t \in I_1,\]
which immediately implies that $D_p(t) \ge \frac{C_1}{2}\geq C_2$ for $t \in I_1$.\newline

\noindent$\bullet$ (Case 2): $i \in \mathcal{C}_{ij}$ and $j \notin \mathcal{C}_{ij}$. In this case, we may use the same arguments as in Case 1 to obtain that $D_p(t) \ge \frac{C_1}{2}\geq C_2$ for $t \in I_1$.\newline
	
\noindent$\bullet$ (Case 3): $i \notin \mathcal{C}_{ij}$ and $j \notin \mathcal{C}_{ij}$. In this case, according to the definition of $\mathcal{C}_{ij}$, we have for any $m \in \mathcal{N}$ that, 
\[|x_i(t) - x_m(t)| \le D_p(t) = |x_i(t) - x_j(t)|,\quad |x_j(t) - x_m(t)| \le D_p(t) = |x_i(t) - x_j(t)|,\quad t\in I_1.\]
Then, it is obvious that $D_p(t)$ has to be the diameter $D_x(t)=D_{\frac{N(N-1)}{N}}(t)$. However, we already assume in the present lemma that $p < \frac{N(N-1)}{2}$. Therefore, this case does not exist. \newline

\noindent$\bullet$ (Case 4): $i \in \mathcal{C}_{ij}$ and $j \in \mathcal{C}_{ij}$. In this case, we use the definition of $\mathcal{C}_{ij}$ to obtain that there exist $m_i, m_j \in \mathcal{N}$ such that
\[ |x_i(t) - x_{m_i}(t)| > D_p(t) = |x_i(t) - x_j(t)|, \quad |x_j(t) - x_{m_j}(t)| > D_p(t) = |x_i(t) - x_j(t)|, \qquad t \in I_1.\]
Then we consider two sub-cases.\newline 	
	
\noindent$\diamond$ $(i)$ If there exists some integer $m \in \mathcal{N}$ such that 
\[ |x_i(t) - x_m(t)| > D_p(t), \quad |x_j(t) - x_m(t)| < D_p(t),\]
then, due to $|x_i(t) - x_m(t)| > D_p(t)$, we can find $q_{i,m} > p$ such that $D_{q_{i,m}} = |x_i(t) - x_m(t)| > D_p(t)$ and thus $D_{q_{i,m}}\ge C_1$. Therefore, we apply the triangle inequality to have
\[C_1 \le |x_i(t) - x_m(t)| \le |x_i(t) - x_j(t)| + |x_j(t) - x_m(t)| < 2D_p(t),\]
which immediately implies that $D_p(t) \ge \frac{C_1}{2}\geq C_2$ for $ t \in I_1$. For the other situation that there exists an integer $m \in \mathcal{N}$ such that
\[|x_i(t) - x_m(t)| < D_p(t), \quad |x_j(t) - x_m(t)| > D_p(t),\]
we can use the same arguments to obtain the same results.\newline

\noindent$\diamond$ $(ii)$ Now we only need to study the case that for any $m \in \mathcal{N}$, one of the following holds,
\begin{equation}\label{D8}
\left\{
\begin{aligned}
&|x_i(t) - x_m(t)| > D_p(t) \ \mbox{and}\  |x_j(t) - x_m(t)| > D_p(t),\quad t\in I_1,\\
&|x_i(t) - x_m(t)| < D_p(t) \ \mbox{and}\   |x_j(t) - x_m(t)| < D_p(t),\quad t\in I_1.
\end{aligned}
\right.
\end{equation}
We will prove $D_p(t)\geq C_2$ by contradiction in this case. Suppose there exists a $t_0\in I_1$ such that $D_p(t)< C_2$. Then we apply the same argument in Lemma \ref{LD1} to imply that there exists an interval $I_{t_0}=[t^*,t_0]$, which has the following properties
\begin{equation}\label{D9}
\left\{\begin{aligned}
&D_p(t^*)=C_2,\\
& D_p(t)< C_2\leq 1,\quad t^*<t\leq t_0.
\end{aligned}
\right.
\end{equation}
On the other hand, we follow \eqref{D8} to separate $\mathcal{N}$ into two sets respectively as below,
\begin{equation}\label{D10}
\left\{
\begin{aligned}
&A = \{ m \in \mathcal{N} \,|\, |x_i(t) - x_m(t)| > D_p(t) \text{ and } |x_j(t) - x_m(t)| > D_p(t), t \in I_1\},\\
&B = \{ m \in \mathcal{N} \,|\, |x_i(t) - x_m(t)| < D_p(t) \text{ and } |x_j(t) - x_m(t)| < D_p(t), t \in I_1\}.
\end{aligned}
\right.
\end{equation}
 Then, the dynamics of $D_p(t)$, i.e. the distance between $x_i$ and $x_j$, is governed by the following differential equation,
\begin{align}\label{D11}
\begin{aligned}
\dot{D}_p &=\dot{x}_j(t) - \dot{x}_i(t),\qquad \qquad t\in I_{t_0}  \\
&= \nu_j + \frac{\kappa}{N} \sum_{k \ne j} \Phi(x_k(t) - x_j(t))  - \nu_i - \frac{\kappa}{N} \sum_{k \ne i} \Phi(x_k(t) - x_i(t)) \\
&= \nu_j - \nu_i - \frac{2\kappa}{N} \Phi(x_j(t) - x_i(t))  + \frac{\kappa}{N} \sum_{k \ne i,j}[\Phi(x_k(t) - x_j(t)) - \Phi(x_k(t) - x_i(t))] \\
& = -\frac{\kappa}{N} \sum_{k \in B} [\Phi(x_j(t) - x_k(t)) + \Phi(x_k(t) - x_i(t))]\\
&\hspace{0.5cm} + \frac{\kappa}{N} \sum_{k \in A} [\Phi(x_k(t) - x_j(t)) - \Phi(x_k(t) - x_i(t))]\\
&\hspace{0.5cm}+\nu_j -\nu_i - \frac{2\kappa}{N} \Phi(x_j(t) - x_i(t))\\
& =\sum_{i=1}^3\mathcal{I}_i.	
\end{aligned}
\end{align}
\noindent$\star$ ($\mathcal{I}_1$): For $\mathcal{I}_1=-\frac{\kappa}{N} \sum_{k \in B} [\Phi(x_j(t) - x_k(t)) + \Phi(x_k(t) - x_i(t))]$, we apply the definition of the set $B$ and the assumption $x_i<x_j$ in $I_1$ to obtain that 
\[x_i(t)<x_k(t)<x_j(t),\quad k\in B,\quad t\in I_{t_0}.\]
Therefore, we immediately have 
\begin{equation}\label{D12}
0 < x_j(t) -x_k(t) < D_p(t), \quad 0< x_k(t) - x_i(t) < D_p(t),\quad t\in I_{t_0}.
\end{equation}
Combing \eqref{D9}, \eqref{D12} and the structure of the communication $\Phi$ in \eqref{B3}, we obtain that
\begin{equation}\label{D13}
\left\{
\begin{aligned}
&0<x_j(t) -x_k(t)\leq 1,\quad 0< x_k(t) - x_i(t) \leq 1,\quad\  t\in I_{t_0},\quad k\in B,\\
& \Phi(x_j(t) - x_k(t)) \le 0, \quad \Phi(x_k(t) - x_i(t)) \le 0,\qquad t\in I_{t_0},\quad k\in B.
\end{aligned}
\right.
\end{equation}
Now we apply \eqref{D13} to obtain the estimate of $\mathcal{I}_1$ as below
\begin{equation}\label{D14}
\mathcal{I}_1=-\frac{\kappa}{N} \sum_{k \in B} [\Phi(x_j(t) - x_k(t)) + \Phi(x_k(t) - x_i(t))]\geq 0.\quad t\in I_{t_0}.
\end{equation}
\noindent$\star$ ($\mathcal{I}_2$): For $\mathcal{I}_2=\frac{\kappa}{N} \sum_{k \in A} [\Phi(x_k(t) - x_j(t)) - \Phi(x_k(t) - x_i(t))]$, we apply the definition of the set $A$ to imply that 
\begin{equation}\label{D15}
|x_k(t)-x_j(t)|\geq C_1,\quad |x_k(t)-x_i(t)|\geq C_1.
\end{equation}
Moreover, the structure of $\Phi$ in \eqref{B3} shows that $\Phi(s)$ is monotonic increasing for $s>0$ and has a finite positive limit $\Phi(+\infty)$ at positive infinity. Therefore, we combine \eqref{D15} and the structure of $\Phi$ to obtain 
\begin{equation}\label{D16}
\left\{
\begin{aligned}
&|\Phi(x_k(t) -x_j(t))| \le \max \{|\Phi(C_1)|, |\Phi(+\infty)|\},\quad  t\in I_{t_0},\\
&|\Phi(x_k(t) -x_i(t))| \le \max \{|\Phi(C_1)|, |\Phi(+\infty)|\},\quad  t\in I_{t_0}.
\end{aligned}
\right.
\end{equation} 
Then, we apply \eqref{D16} to obtain the estimate of $\mathcal{I}_2$ as below
\begin{equation}\label{D17}
\mathcal{I}_2=\frac{\kappa}{N} \sum_{k \in A} [\Phi(x_k(t) - x_j(t)) - \Phi(x_k(t) - x_i(t))]\geq -2\kappa \max \{|\Phi(C_1)|, |\Phi(+\infty)|\},\quad  t\in I_{t_0}.
\end{equation}

\noindent Now, we combine the estimates \eqref{D14}, \eqref{D17} and the equation \eqref{D11} to obtain the differential inequality of $D_p(t)$ as below,
\begin{equation}\label{D18}
\dot{D}_p(t) \ge - D_{\nu} - \frac{2\kappa}{N} \Phi(D_p(t)) - 2 \kappa \max \{|\Phi(C_1)|, |\Phi(+\infty)|\},\quad t\in I_{t_0}.
\end{equation}
Next, according to the definition of $C_2$, we have $C_2\leq \Phi^{-1} \Big(-\frac{N}{2\kappa}[2 \kappa \max \{|\Phi(C_1)|, |\Phi(+\infty)|\} + D_{\nu}]\Big)$, which is equivalent to
\begin{equation}\label{D19}
- D_{\nu} - \frac{2\kappa}{N} \Phi(C_2) - 2 \kappa \max \{|\Phi(C_1)|, |\Phi(+\infty)|\}\geq 0,\quad t\in I_{t_0}.
\end{equation} 
Combing \eqref{D9}, \eqref{D18}, \eqref{D19} and the monotonic property of the communication function $\Phi$, we obtain 
\[\dot{D}_p\geq 0, \quad t\in I_{t_0}.\]
 Therefore, we have $D_p(t_0)\geq D_p(t^*)=C_2$ which obviously contradicts to \eqref{D9}. Therefore, we obtain that $D_p(t)\geq C_2$ for all $t\in I_{1}$ in Case 4. Combining all the analysis from (Case 1) to (Case 4), we can conclude that $D_p(t)\geq C_2$ for all $t\in I_{1}$. 
 
Next, suppose $D_p(t)\geq C_2$ in $I_\alpha$, then we have $D_p(t_\alpha)\geq C_2$, where $t_\alpha$ is the end point of $I_\alpha$. Moreover, by the definition of $I_{\alpha}$, we know $t_{\alpha}$ is the start point of $I_{\alpha+1}$, and  thus we can apply the same argument as above to conclude for any $t\in I_{\alpha+1}$ that,
 \begin{align*}
 \begin{aligned}
D_p(t)&\geq \min \Big\{ \Phi^{-1} \Big(-\frac{N}{2\kappa}[2 \kappa \max \{|\Phi(C_1)|, |\Phi(+\infty)|\} + D_{\nu}]\Big), 1, D_p(t_\alpha),\frac{C_1}{2}\Big\}\\
 &\geq \min \Big\{ \Phi^{-1} \Big(-\frac{N}{2\kappa}[2 \kappa \max \{|\Phi(C_1)|, |\Phi(+\infty)|\} + D_{\nu}]\Big), 1, C_2,\frac{C_1}{2}\Big\}\\
 &= C_2,
 \end{aligned}
 \end{align*}
 where the last equality holds due to the definition of $C_2.$
Therefore, we apply the induction principle to conclude that $D_p(t)\geq C_2$ for all $t\geq 0$.\newline
\end{proof}
\noindent $\mathbf{Proof\ of\  Theorem\  \ref{TD1}}$: According to the Lemma \ref{LD1} and Lemma \ref{LD2} and the finite number of particles, we can inductively construct the uniform lower bound for the distance between particles. More precisely, we can follow the process in Lemma \ref{LD2} to construct a positive constant $C_m$ such that 
\[D_1(t)=\min_{i,j \in \mathcal{N}}|x_i(t)-x_j(t)|\geq C_m,\quad t\geq 0.\]
\qed

\begin{remark}\label{R4.2}
By simple calculation, we can derive the order of the lower bound with respect to particle number $N$. In fact, we set $\Phi^\infty := \lim_{x \to +\infty} \Phi(x) = \frac{1}{\beta - 1} < +\infty$. Then, if there exist constants $x>0$ and $C < 0$ such that $\Phi(x) = C$, we can obtain
\begin{align}\label{B4}
\left\{
\begin{aligned}
& x = \Phi^{-1}(C) = e^C,\quad \beta = 1;\\
& x = \Phi^{-1}(C) = \left[\frac{1}{1 - C(\beta - 1)}\right]^{\frac{1}{\beta -1}},\quad \beta > 1.
\end{aligned}
\right.
\end{align}
Then we can substitute the equality \eqref{B4} into the induction process and obtain the estimates for the lower bound of particle distance as below,
\[C_m \geq C \exp(-\frac{N^2\log N}{\beta-1}),\quad \beta>1.\]
\end{remark}
\vspace{0.05cm}
\subsection{Critical coupling strength for mono-cluster formation}
In this subsection, we will construct the critical coupling strength $\kappa_c$ for mono-cluster formation of the Cucker-Smale model. According to the collision avoidance results in the previous section, we can immediately conclude the order preservation of the spatial variable for all time, i.e. 
\begin{equation}\label{order}
x_1(t)<x_2(t)<\cdots<x_N(t),\quad t\geq 0.
\end{equation}
The order preservation allows us to apply similar arguments in \cite{H-K-P-Z} to yield the asymptotic non-oscillatory behavior of relative distance between particles governed by the Cucker-Smale system \eqref{B5}. In fact, based on the analysis in \cite{H-K-P-Z}, we have the following lemma.
\begin{lemma} \label{non-oscillatory}
Let $X$ be a solution to system \eqref{B5} with initial configuration $X^0$ and natural velocities $\nu_i$ satisfying the following properties
\[\sum_{i=1}^N x_i^0 = 0,\quad \sum_{i=1}^N \nu_i = 0,\quad x_i^0\neq x_j^0,\quad 1\leq i\neq j\leq N.\]
 Then for every $i,j = 1, \ldots,N$, we have
\[\limsup_{t \to +\infty} |x_i(t) - x_j(t)| = \liminf_{t \to \infty} |x_i(t) - x_j(t)|.\]
More precisely, the limit of the relative distance for any two particles exists. Therefore, we immediately obtain that either $ \lim\limits_{t \to \infty} |x_i(t) - x_j(t)| < \infty$ or $ \lim\limits_{t \to \infty} |x_i(t) - x_j(t)| = \infty$.
\end{lemma}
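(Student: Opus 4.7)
The plan is to follow the non-oscillation analysis in \cite{H-K-P-Z}, adapted to the singular kernel. The structural inputs are: order preservation $x_1(t)<\cdots<x_N(t)$ (from collision avoidance underlying Theorem~\ref{TD1}), and the uniform lower bound $|x_i(t)-x_j(t)|\geq C_m>0$, which allows the singular potential to be safely integrated along trajectories.

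First, I would recast \eqref{B5} as a gradient flow. Let $F$ be the antiderivative of $\Phi$ on $(0,\infty)$ with $F(1)=0$, and set
\[V(X) := -\sum_{i=1}^N \nu_i x_i + \frac{\kappa}{N}\sum_{i<j} F(x_j - x_i).\]
A direct computation using oddness of $\Phi$ and the order preservation gives $\dot{x}_i = -\partial_{x_i}V(X)$, hence $\frac{d}{dt}V(X(t)) = -\sum_i \dot{x}_i(t)^2 \leq 0$, so $V$ is a Lyapunov functional along the trajectory. The uniform lower bound $C_m$ from Theorem~\ref{TD1} is what guarantees that all arguments $x_j - x_i$ stay in the smooth region of $F$, so this identity holds in the classical sense.

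Second, to extract convergence of each relative distance, I would provisionally group indices into (maximal) equivalence classes $\mathcal{I}_1 < \cdots < \mathcal{I}_K$ by declaring $i \sim j$ iff $\liminf_t |x_i(t)-x_j(t)| < \infty$. Using $\Phi(s)\to \Phi^\infty<\infty$ at infinity (which holds precisely because $\beta>1$), the inter-cluster terms of $V$ grow at most linearly in the inter-cluster distances at rate $\Phi^\infty$; these linear contributions can be absorbed against the drift term $-\sum \nu_i x_i$ via a re-summation that exploits $\sum \nu_i = 0$, producing a modified Lyapunov functional $\tilde V$ that is bounded below. Since $\tilde V$ is non-increasing and bounded below, it converges, yielding (after subtracting the asymptotic cluster drifts $\bar\nu_k$) a finite dissipation integral $\int_0^\infty \sum_{i\in\mathcal I_k} |\dot{x}_i - \bar{\nu}_k|^2\,dt<\infty$ on each cluster. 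A Barbalat-type argument, combined with the uniform continuity of $\dot X$ (which itself follows from the bounds on $\Phi$ provided by Theorem~\ref{TD1}), then forces $\dot x_i - \dot x_j \to 0$ intra-cluster and $\dot x_i - \dot x_j \to \bar\nu_l - \bar\nu_k \ne 0$ inter-cluster, giving in both cases the existence of $\lim_t|x_i(t)-x_j(t)|$.

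The main obstacle is precisely the bookkeeping in the second step: one must simultaneously identify the correct cluster partition and the asymptotic cluster velocities, and verify that $\tilde V$ is bounded below despite some particles drifting to infinity. The short-range character of the interaction ($\Phi^\infty<\infty$), available only because $\beta>1$, is what makes the inter-cluster linear growth of $F$ match the linear drift from $-\sum \nu_i x_i$ so that the cancellation actually produces a bounded-below $\tilde V$. If this cancellation fails, the gradient-flow argument alone would not rule out persistent oscillation; it is the combination of short-range decay, order preservation, and the uniform positive separation from Theorem~\ref{TD1} that makes the scheme go through.
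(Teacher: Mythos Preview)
Your gradient-flow framing is different from the paper's route, and the Lyapunov identity $\frac{d}{dt}V(X)=-\sum_i\dot x_i^2$ is correct. But the final implication fails: from ``$\dot x_i-\dot x_j\to 0$ intra-cluster'' you cannot conclude that $\lim_t(x_i-x_j)$ exists. Take $x_i-x_j=\sin(\log(1+t))$: bounded, derivative tending to zero, no limit. An $L^2$ dissipation bound is even weaker. Gradient flows yield convergence to a single critical point only under extra hypotheses (precompact trajectory, or a {\L}ojasiewicz inequality), and in the multi-cluster regime the trajectory is unbounded, so neither is available without first passing to relative coordinates on each cluster---which presupposes the partition you are trying to establish. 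A secondary issue: your relation ``$i\sim j$ iff $\liminf_t|x_i-x_j|<\infty$'' is not obviously transitive, since the time sequences realizing the liminfs for $(i,j)$ and for $(j,k)$ need not overlap; so the partition into $\mathcal I_k$ is not well-defined a priori, and the construction of a bounded-below $\tilde V$ (which you yourself flag as the main obstacle) remains circular.

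The paper closes this gap by a different mechanism. It first invokes \cite{H-Ts} to obtain that each velocity $\dot x_i(t)$ converges to some $v_i^\infty$; this immediately forces $|x_i-x_j|\to\infty$ whenever $v_i^\infty\neq v_j^\infty$. For indices sharing the same limiting velocity it then proves an equilibrium-uniqueness lemma (Lemma~\ref{equilibrium}): the stationary relative configuration of the corresponding sub-system of \eqref{B5} is unique. If $\limsup\neq\liminf$ for some pair in that group, one extracts two time sequences along which the (bounded, by order preservation and sandwiching between the extremal particles of the group) relative configuration converges to two \emph{distinct} solutions of the same equilibrium system, contradicting uniqueness. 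This equilibrium-uniqueness step is precisely what converts velocity alignment into convergence of relative positions, and it is absent from your sketch.
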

\begin{proof}
The proof is almost similar as in \cite{H-K-P-Z}. But since we consider singular case and thus the potential function $\Phi$ is very different from regular case, we will keep the details of the proof for convenience and put it in Appendix \ref{p-non-oscillatory}.
\end{proof}

Using the same arguments as in \cite{H-K-P-Z}, we provide the critical coupling strength $\kappa_c$ for the emergence of mono-cluster flocking of C-S particles:
\begin{equation}\label{f-kappa_c}
\kappa_c = \max_{1 \le l \le N-1} \left\{- \frac{\frac{1}{l} \sum_{i=1}^{l} \nu_i}{\frac{N-l}{N} \Phi^\infty}\right\}.
\end{equation}

\noindent In the next lemma, we will show that the sufficient and necessary condition of the emergence of mono-cluster flocking for system \eqref{B5} is $\kappa>\kappa_c$.
\begin{lemma}\label{critical}
Let $\kappa$ be the positive coupling strength and suppose the initial configuration $X^0$ and natural velocities $\nu_i$ satisfy the following properties
\[\sum_{i=1}^N x_i^0 = 0,\quad \sum_{i=1}^N \nu_i = 0,\quad x_i^0\neq x_j^0,\quad 1\leq i\neq j\leq N.\]
Then the condition $\kappa > \kappa_c$ is a sufficient and necessary condition for the emergence of mono-cluster flocking to the C-S system \eqref{B5}. More precisely,
\begin{enumerate}[(1)]
\item
Assume $\kappa > \kappa_c$ and let $X$ be a solution to system \eqref{B5} with initial data $X^0$. Then, the mono-cluster flocking emerges:
\[\exists \  x_{ij}^\infty := \lim_{t \to \infty} |x_i(t) - x_j(t)|, \quad 1 \le i,j \le N.\]
\item
Assume that $X^\infty = (x_1^\infty, \ldots, x_N^\infty)$ is an emergent mono-cluster flocking state to system $\eqref{B5}$ with some initial configuration $X^0$. Then, $\kappa > \kappa_c$ holds.
\end{enumerate}
\end{lemma}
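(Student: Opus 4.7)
My plan is to reduce the problem to the dynamics of the partial sum $S_l(t) := \sum_{i=1}^{l} x_i(t)$, mimicking the strategy from the regular-kernel case in \cite{H-K-P-Z}. By Theorem \ref{TD1} collisions never occur, so the spatial ordering \eqref{order} is preserved for all time, and after relabeling I may assume $\nu_1 \le \nu_2 \le \cdots \le \nu_N$ is the ordering used in the definition \eqref{f-kappa_c} of $\kappa_c$. Summing \eqref{B5} over $i=1,\dots,l$ and using the antisymmetry $\Phi(-x)=-\Phi(x)$ to kill the intra-group terms will yield the identity
\[
\dot S_l(t) \;=\; \sum_{i=1}^{l}\nu_i \;+\; \frac{\kappa}{N}\sum_{i=1}^{l}\sum_{k=l+1}^{N}\Phi\bigl(x_k(t)-x_i(t)\bigr), \qquad 1\le l \le N-1.
\]
Every argument of $\Phi$ above is positive, and from \eqref{B3} one has the strict bound $\Phi(s)<\Phi^{\infty}=(\beta-1)^{-1}$ for all $s\in(0,\infty)$ together with $\Phi(s)\to\Phi^{\infty}$ as $s\to+\infty$. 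These two facts on $\Phi$, together with the non-oscillatory dichotomy of Lemma \ref{non-oscillatory}, are the only ingredients the whole argument uses.

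For sufficiency I will assume $\kappa>\kappa_c$ and that mono-cluster flocking fails. By Lemma \ref{non-oscillatory} each $\lim_{t\to\infty}|x_i-x_j|$ exists in $[0,+\infty]$, so the failure forces some adjacent gap $x_{l^*+1}(t)-x_{l^*}(t)$ to diverge to $+\infty$. Then every cross-pair $(i,k)$ with $i\le l^*<k$ satisfies $x_k-x_i\ge x_{l^*+1}-x_{l^*}\to\infty$, hence $\Phi(x_k-x_i)\to\Phi^{\infty}$; passing to the limit in the identity at $l=l^*$ gives $\dot S_{l^*}(t)\to \sum_{i=1}^{l^*}\nu_i+\frac{\kappa\,l^*(N-l^*)}{N}\Phi^{\infty}$, which is strictly positive precisely because $\kappa>\kappa_c$ at index $l^*$ in \eqref{f-kappa_c}. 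Hence $S_{l^*}(t)\to+\infty$. On the other hand, the conservation $\sum_i x_i\equiv 0$ yields the elementary estimate
\[
x_{l^*+1}(t)-x_{l^*}(t) \;\le\; \frac{1}{N-l^*}\sum_{k>l^*}x_k(t) - \frac{1}{l^*}\sum_{i\le l^*}x_i(t) \;=\; -\frac{N\,S_{l^*}(t)}{l^*(N-l^*)},
\]
so divergence of the gap forces $S_{l^*}(t)\to-\infty$, contradicting $S_{l^*}(t)\to+\infty$.

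For necessity I will assume that a mono-cluster flocking state emerges and derive $\kappa>\kappa_c$ by contradiction. The flocking assumption produces a uniform bound $D_x(t)\le R$, which together with the mean-zero condition $\sum_i x_i=0$ yields $|x_i(t)|\le R$ and therefore $|S_l(t)|\le lR$ for every $l$. Supposing $\kappa\le\kappa_c$, I pick $l_0$ attaining the maximum in \eqref{f-kappa_c}, so that $\sum_{i=1}^{l_0}\nu_i+\frac{\kappa\,l_0(N-l_0)}{N}\Phi^{\infty}\le 0$. Inserting the strict bound $\Phi(x_k-x_i)\le\Phi(R)<\Phi^{\infty}$ into the identity produces
\[
\dot S_{l_0}(t) \;\le\; \sum_{i=1}^{l_0}\nu_i+\frac{\kappa\,l_0(N-l_0)}{N}\Phi(R) \;\le\; -\frac{\kappa\,l_0(N-l_0)}{N}\bigl(\Phi^{\infty}-\Phi(R)\bigr) \;=:\; -\delta\;<\;0
\]
uniformly in $t$, so integration forces $S_{l_0}(t)\to-\infty$, contradicting $|S_{l_0}|\le l_0 R$.

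The main conceptual obstacle is the sufficiency direction: one must convert the purely qualitative failure of flocking (some limit is $+\infty$) into a quantitative limit for $\dot S_{l^*}$, and this is precisely where Lemma \ref{non-oscillatory} is essential --- without its dichotomy between bounded and divergent-to-infinity, the cross terms could oscillate and one could not pass to the limit inside the double sum. A smaller subtlety is keeping the relabeling consistent with the ordering used in \eqref{f-kappa_c}; this should be painless since collisions are excluded and the order of particles is therefore fixed throughout the evolution.
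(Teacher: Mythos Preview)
Your approach is essentially the same as the paper's: both directions hinge on the partial sum $S_l(t)=\sum_{i\le l}x_i(t)$, the antisymmetry identity
\[
\dot S_l=\sum_{i=1}^{l}\nu_i+\frac{\kappa}{N}\sum_{i\le l}\sum_{k>l}\Phi(x_k-x_i),
\]
and the dichotomy of Lemma~\ref{non-oscillatory} to locate a diverging adjacent gap. Your necessity argument is in fact slightly sharper than the paper's written version: the paper records only $\dot S_{r_0}\le 0$ and then asserts $S_{r_0}\to-\infty$, whereas you correctly extract the uniform gap $\Phi^\infty-\Phi(R)>0$ from the flocking bound $D_x\le R$ to get $\dot S_{l_0}\le-\delta<0$.

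One correction, however: your relabeling remark ``I may assume $\nu_1\le\nu_2\le\cdots\le\nu_N$ is the ordering used in the definition \eqref{f-kappa_c}'' is wrong and should be deleted. The indices in \eqref{f-kappa_c} are the \emph{spatial} indices fixed by \eqref{order}; the paper explicitly notes just before the clustering algorithm that the natural velocities are \emph{not} assumed monotone. Reordering the $\nu_i$ would change the value of $\kappa_c$ and decouple it from the spatial partial sums your argument actually manipulates. Fortunately your proof never uses any ordering of the $\nu_i$ --- the inequality $x_{l^*+1}-x_{l^*}\le -\frac{N}{l^*(N-l^*)}S_{l^*}$ and the sign of each $\Phi(x_k-x_i)$ rely only on the spatial order --- so once you drop that sentence the argument stands as written.
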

\begin{proof}
The proof is almost similar as in \cite{H-K-P-Z}. But since we consider singular case and thus the potential function $\Phi$ is very different from the regular case, we will keep the details of the proof for convenience and put it in Appendix \ref{p-critical}.\newline
\end{proof}

\subsection{Sufficient and necessary condition for multi-cluster formation}
In this part, we will provide a sufficient and necessary condition for multi-cluster formation emergence. Actually, we will describe how to capture the asymptotic clusters only based on natural velocities and coupling strength. Similar as in \cite{H-K-P-Z}, we will provide a clustering algorithm to divide the entire ensemble into several sub-ensembles for the given natural velocities $\nu_i$ and a fixed coupling stength $\kappa$. Here, we denote $(a, b] = \{a+1 \le \cdots \le b\}$. For a given sub-ensemble $(a, b]$, we introduce the local average velocity, local average position, and local average velocity and position fluctuations as follows:
\begin{align}
\begin{aligned}\label{local vp}
&\bar{\nu}^{(a, b]} := \frac{1}{b-a} \sum_{j=a+1}^b \nu_i, \quad \bar{x}^{(a, b]} := \frac{1}{b-a} \sum_{j = a+1}^b x_j, \\
&\hat{\nu}_j^{(a,b]} := \nu_j - \bar{\nu}^{(a, b]}, \quad \hat{x}_j^{(a,b]} := x_j - \bar{x}^{(a, b]}, \text{\quad for } a < j \le b.
\end{aligned}
\end{align}
We first fix the natural velocity $\nu$ and suppress the $\nu$-dependence in clustering number $N_c(\nu, \kappa)$, namely, we set
\[N_c(\kappa) := N_c(\nu, \kappa).\]
Note that the natural velocities are not monotonic.
We will divide the entire ensemble into several sub-ensembles according to the following algorithm:
\begin{equation} \label{f-algorithm}
\begin{cases}
\displaystyle  n_0 = 0, \\
\displaystyle \mathcal{I}_l := \{n_{l-1} +1\} \\
\hspace{0.5cm} \cup \left\{m : \frac{1}{k - n_{l-1}}  \sum_{j = n_{l-1} + 1}^k \hat{\nu}_j^{(n_{l-1}, m]} + \kappa \frac{m-k}{N} \Phi^\infty > 0, \text{ for any } n_{l-1} < k \le m-1 \right\}, \\
\displaystyle  n_l = \max \mathcal{I}_l. 
\end{cases}
\end{equation}
Now for a fixed natural velocity $\nu$ and a given coupling stength $\kappa$, by the clustering algorithm \eqref{f-algorithm} we will have a partition of particles below
\begin{equation*}
\left\{\begin{aligned}
&\{1,\ldots,N\} =: \bigsqcup_{j=1}^{N_c(\kappa)} \mathcal{I}_j,\\
&n_0 := 0, \quad n_{N_c(\kappa)} := N, \quad \mathcal{I}_l = \{n_{l-1} + 1, \ldots, n_l\},\quad  l =1,\ldots, N_c(\kappa).
\end{aligned}
\right.
\end{equation*}
For each sub-ensemble $\mathcal{I}_l (1 \le l \le N_c(\kappa))$, according to the clustering algorithm \eqref{f-algorithm}, we have the corresponding conditions
\[\frac{1}{k - n_{l-1}}  \sum_{j = n_{l-1} + 1}^k \hat{\nu}_j^{(n_{l-1}, n_l]} + \kappa \frac{n_l -k}{N} \Phi^\infty > 0, \quad \text{for all } \ n_{l-1} < k \le n_l -1.\]

\begin{figure}[t]
\centering
\mbox{
\subfigure[$\kappa=1.6$]{\includegraphics[width=0.48\textwidth]{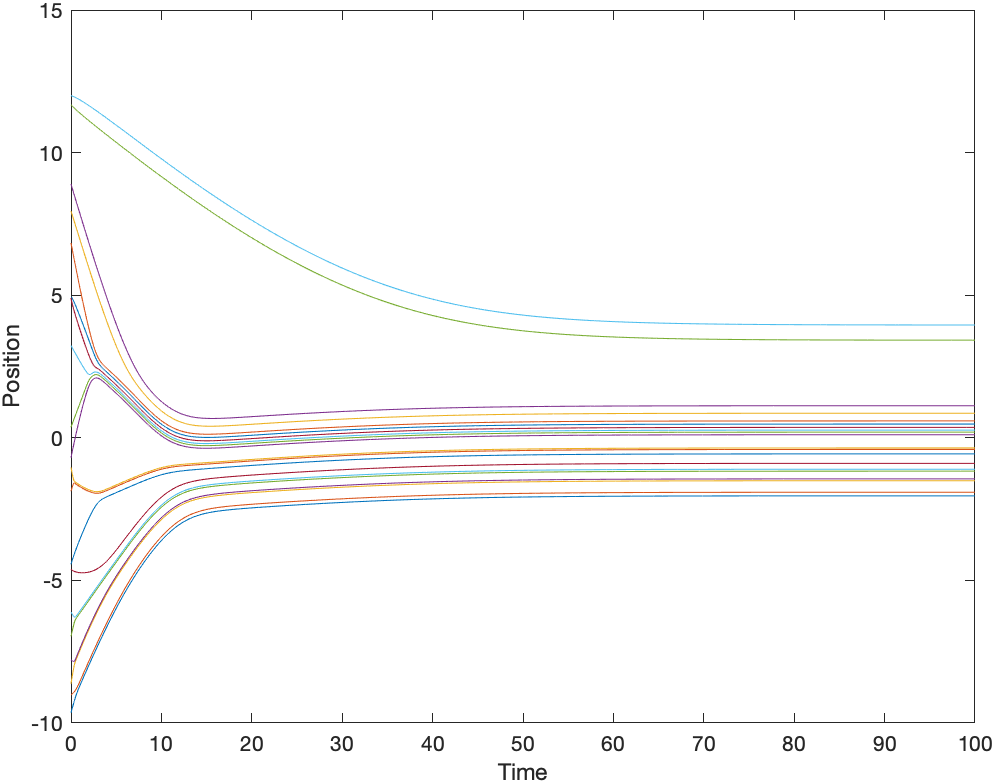}
\label{Fig2:16}}
\hspace{0.1cm}
\subfigure[$\kappa=0.4$]{\includegraphics[width=0.48\textwidth]{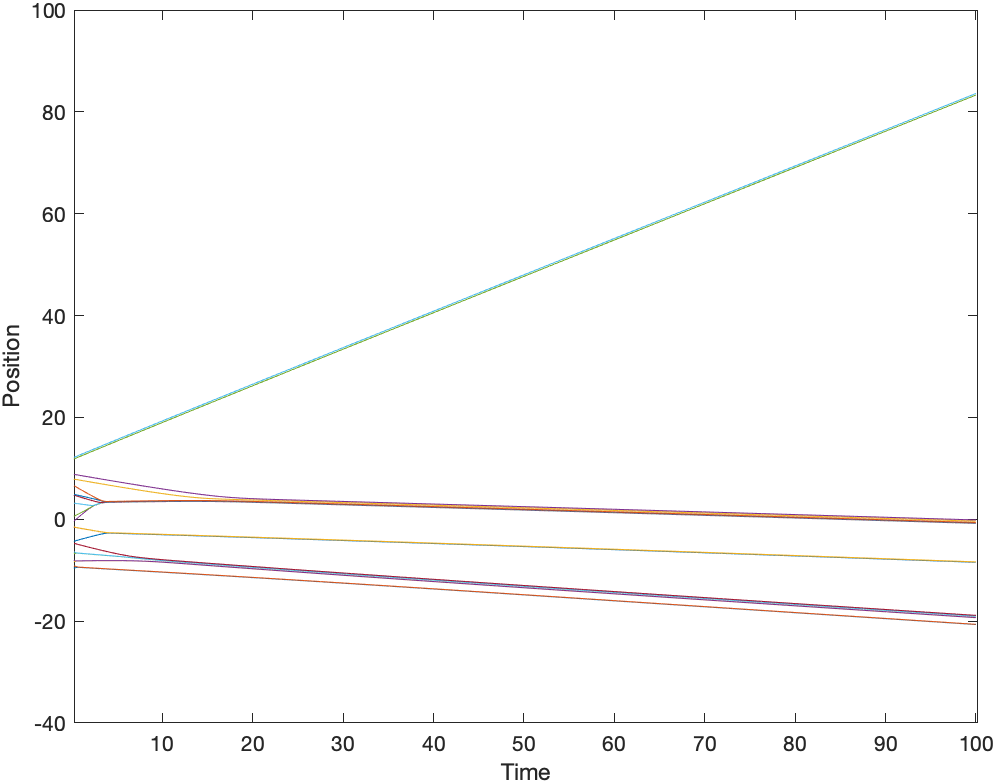}
\label{Fig2:04}}
}
\centering
\mbox{
\subfigure[$\kappa=0.01$]{\includegraphics[width=0.48\textwidth]{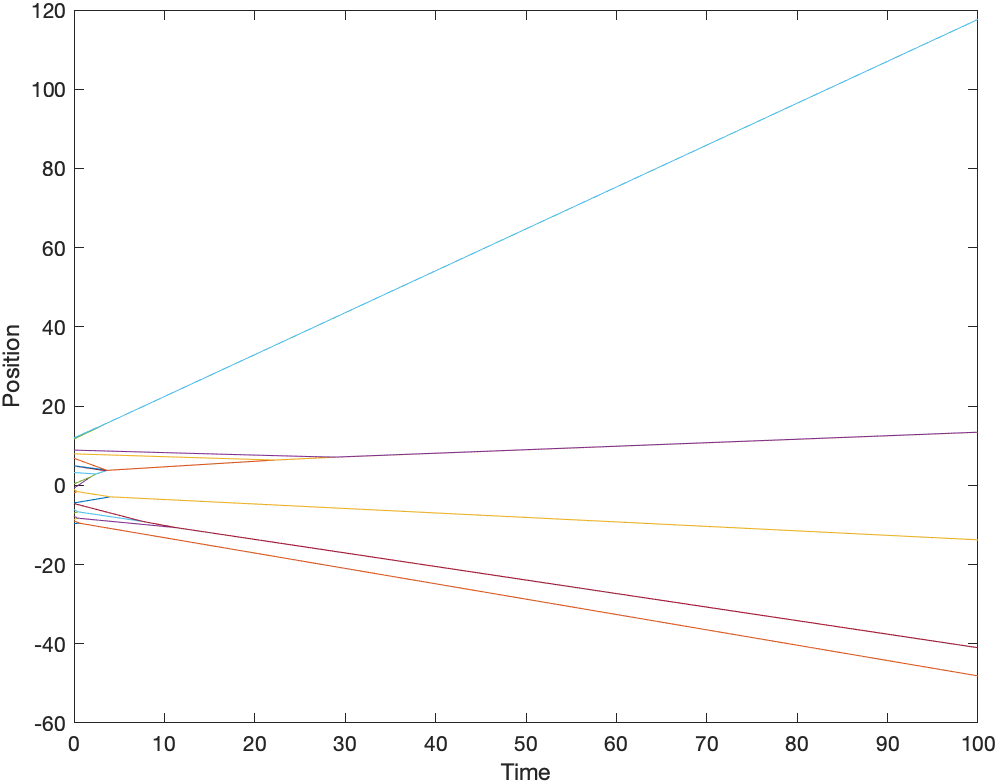}
\label{Fig2:001}}
\hspace{0.1cm}
\subfigure[Number of clusters]{\includegraphics[width=0.48\textwidth]{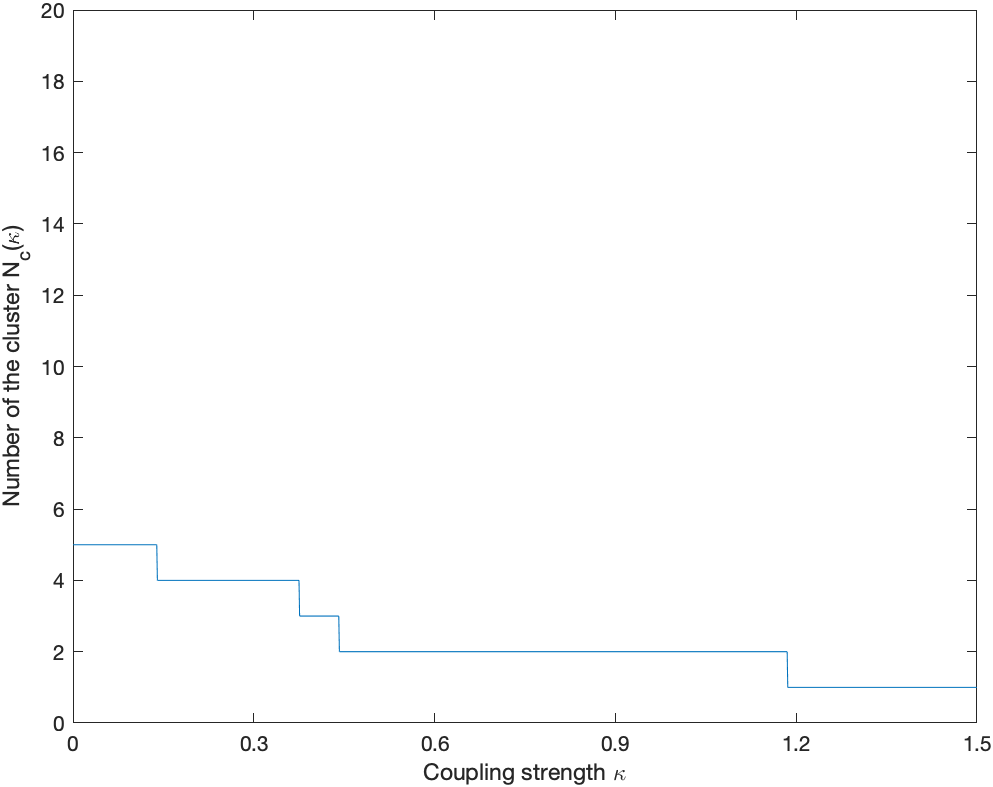}
\label{Fig2:num}}
}
\caption{Clustering dynamics of the first-order model}
\label{Fig2:cluster}
\end{figure}

\noindent Figure \ref{Fig2:16}, \ref{Fig2:04} and \ref{Fig2:001} shows the asymptotical behavior and clustering formation of the first order C-S model with $\kappa=1.6$, $0.4$ and $0.01$ respectively. We emphasize that there are no collisions in Figure  \ref{Fig2:04} and \ref{Fig2:001} although it looks like due to the scale of the figures. Figure \ref{Fig2:num} provides the number of the clusters for different coupling strength $\kappa$ based on the algorithm \eqref{f-algorithm}, which is compatible with the numerical results in Figure \ref{Fig2:16}, \ref{Fig2:04} and \ref{Fig2:001}. We now present the main result on the complete clustering predictability to system \eqref{B5} according to the constructed clustering algorithm \eqref{f-algorithm}. Then the sufficient and necessary condition for $p$-cluster formation will be provided for any positive integer $1\leq p\leq N$.
\begin{theorem}\label{T4.2}
Let $X$ be a solution to system \eqref{B5} with initial configuration $X^0$ and natural velocities $\nu_i$ satisfying the following properties
\[\sum_{i=1}^N x_i^0 = 0,\quad \sum_{i=1}^N \nu_i = 0,\quad x_i^0\neq x_j^0,\quad 1\leq i\neq j\leq N.\]
 Moreover, for a given coupling strength $\kappa$, let $\{\mathcal{I}_1, \ldots, \mathcal{I}_{N_c(\kappa)}\}$ be a partition of the whole ensemble constructed by the clustering algorithm \eqref{f-algorithm}. Then, we have the following assertions:
\begin{enumerate}[(i)]
\item
The sub-ensemble $\mathcal{I}_i = \{n_{i-1} +1, \ldots, n_i\}$ forms a     
maximal cluster-flocking asymptotically: for $i \ne j$,
\begin{align*}
\begin{aligned}
&\exists \  \lim_{t \to \infty} |x_k(t) - x_l(t)| < \infty \quad \forall \ k,l \in \mathcal{I}_i \quad \text{and} \\
&\sup_{0 \le t < \infty} \underset{l \in \mathcal{I}_j}{\min_{k \in \mathcal{I}_i}} |x_k(t) - x_l(t)| = \infty \quad \text{for } i \ne j.
\end{aligned} 
\end{align*}
\item
The asymptotic group velocity $v_i^\infty$ is given by the following explicit formula:
\[v_i^\infty := \bar{\nu}^{(n_{i-1}, n_i]} + \frac{\kappa (N - n_i - n_{i-1})}{N} \Phi^\infty, \quad 1 \le i \le N_c(\kappa).\]
\item
The $p$-cluster formation emergence occurs if and only if $p = N_c(\kappa)$ and the clusters are exactly $ \mathcal{I}_i$ where $i = 1, \ldots, p$.
\end{enumerate}
\end{theorem}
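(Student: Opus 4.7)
The plan is to adapt the clustering analysis of \cite{H-K-P-Z} to the singular setting, exploiting the key fact that $\Phi(x)\to\pm\Phi^\infty$ as $x\to\pm\infty$, so that inter-cluster interactions asymptotically reduce to constant shifts and each sub-ensemble behaves like an effective mono-cluster subsystem. By the order preservation \eqref{order}, Lemma \ref{non-oscillatory}, and Theorem \ref{TD1}, for every pair $(i,j)$ the limit $\lim_{t\to\infty}|x_i(t)-x_j(t)|$ exists in $(0,+\infty]$. Declare $i\sim j$ iff this limit is finite; by the triangle inequality $\sim$ is an equivalence relation whose classes are the true asymptotic clusters, and the goal is to show these classes coincide with $\mathcal{I}_1,\dots,\mathcal{I}_{N_c(\kappa)}$.

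The heart of the argument is a joint induction on $l=1,\dots,N_c(\kappa)$ establishing
\begin{enumerate}[(a)]
\item the diameter of $\{x_k: k\in\mathcal{I}_l\}$ is uniformly bounded in time, and
\item $\lim_{t\to\infty}(x_{n_l+1}(t)-x_{n_l}(t))=+\infty$ whenever $n_l<N$.
\end{enumerate}
For (a), rewrite the equation for $k\in\mathcal{I}_l$ as an $|\mathcal{I}_l|$-particle Cucker-Smale subsystem with time-dependent effective natural velocity
\[\tilde\nu_k(t)=\nu_k+\frac{\kappa}{N}\sum_{j\notin\mathcal{I}_l}\Phi(x_j(t)-x_k(t)),\]
which, by the inductive knowledge of (b) for smaller $l'$ and by (b) at the current step (established concurrently), converges as $t\to\infty$ to $\nu_k+\frac{\kappa(N-n_l-n_{l-1})}{N}\Phi^\infty$. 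After re-normalizing the coupling as $\kappa'=\kappa|\mathcal{I}_l|/N$, one checks that the algorithm condition \eqref{f-algorithm} restricted to $m=n_l$ is precisely the critical-coupling inequality $\kappa'>\kappa_c$ from \eqref{f-kappa_c} for the shifted sub-ensemble; Lemma \ref{critical} then yields mono-cluster flocking inside $\mathcal{I}_l$. For (b), the maximality of $n_l$ in \eqref{f-algorithm} supplies an index $k$ with the reverse inequality for $m=n_l+1$; summing $\dot x_j-\dot x_k$ over $j\in\{k+1,\dots,n_l+1\}$ and using $\Phi\le\Phi^\infty$ produces a strictly negative asymptotic drift which, combined with the already-shown boundedness in (a) for $\mathcal{I}_l$, forces $x_{n_l+1}-x_{n_l}\to+\infty$.

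Once (i) is in hand, (ii) follows by averaging $\dot x_k$ over $k\in\mathcal{I}_i$: the within-cluster interactions cancel by antisymmetry of $\Phi$, and by the order preservation the cross-cluster interactions decompose into $n_{i-1}$ terms converging to $-\Phi^\infty$ and $N-n_i$ terms converging to $+\Phi^\infty$, producing exactly $v_i^\infty=\bar\nu^{(n_{i-1},n_i]}+\frac{\kappa(N-n_i-n_{i-1})}{N}\Phi^\infty$. Assertion (iii) is then immediate from the uniqueness of the equivalence classes of $\sim$: a $p$-cluster formation is possible only when $p=N_c(\kappa)$, and in that case the clusters are forced to be $\mathcal{I}_1,\dots,\mathcal{I}_p$.

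The principal difficulty is the simultaneous nature of the induction: proving (a) for $\mathcal{I}_l$ genuinely requires control on particles indexed above $n_l$, which is what (b) at the current step must provide. The bootstrap, following \cite{H-K-P-Z}, is to derive (b) using only the one-sided bound $\Phi\le\Phi^\infty$ (so it does not yet need (a) for the next cluster) and then feed its separation into a quantitative version of (a). Converting the strict algorithm inequality into a time-uniform drift estimate valid from some finite time onward, while absorbing the $o(1)$ error in $\Phi(x_j-x_k)\mp\Phi^\infty$ as the cross-cluster gaps grow, is where most of the technical work lies.
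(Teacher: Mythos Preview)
Your overall architecture---set up the equivalence relation via Lemma~\ref{non-oscillatory}, then match the true clusters to the $\mathcal{I}_l$ by induction---is the same as the paper's, and your treatment of (ii) and (iii) is correct. The gap is in how you run the inductive step for (i).

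You frame the step as proving (a) boundedness inside $\mathcal{I}_l$ and (b) $x_{n_l+1}-x_{n_l}\to\infty$, but as written each needs the other. Your (a) reduces to Lemma~\ref{critical} for the sub-ensemble with \emph{time-dependent} effective velocities $\tilde\nu_k(t)$; Lemma~\ref{critical} is stated for constant natural velocities, so to invoke it you need $\tilde\nu_k(t)$ to converge, which requires $\Phi(x_j-x_k)\to\Phi^\infty$ for $j>n_l$, $k\le n_l$---that is precisely (b) at the current step. Conversely, your (b) explicitly ``combines with the already-shown boundedness in (a) for $\mathcal{I}_l$''. The ``established concurrently'' and the bootstrap paragraph do not actually break this cycle: the one-sided bound $\Phi\le\Phi^\infty$ lets you upper-bound $\tilde\nu_k$, but not lower-bound it, so it does not give you the convergence your (a) needs.

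The paper avoids this cleanly in two ways. First, its analogue of your (a) (Step~A.1 in Appendix~\ref{p-f-predictability}) is a \emph{self-contained contradiction}: one supposes some $r<n_l$ has $x_{r+1}-x_r\to\infty$ and compares $\frac{d}{dt}\bar x^{(n_{l-1},r]}$ with $\frac{d}{dt}\bar x^{(n_{l-1},n_l]}$. The point is that the \emph{contradiction hypothesis itself} furnishes all the separation needed: for any $k>r$ and $i\le r$ one has $x_k-x_i\ge x_{r+1}-x_r\to\infty$, so every cross-term $\Phi(x_k-x_i)\to\Phi^\infty$ without any appeal to (b). The algorithm inequality at $m=n_l$ then forces $\bar x^{(n_{l-1},r]}$ to eventually overtake $\bar x^{(n_{l-1},n_l]}$, contradicting the order relation. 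No reduction to Lemma~\ref{critical} is attempted, and none is needed. Second, for maximality (Step~A.2), the paper does \emph{not} try to prove $x_{n_l+1}-x_{n_l}\to\infty$ directly by a drift computation as you do; instead it takes the true maximal cluster $\tilde{\mathcal I}=(n_{l-1},n_*]$ containing $\mathcal{I}_l$ (which exists by Lemma~\ref{non-oscillatory}), uses the bounded diameter and the genuine separation $x_{n_*+1}-x_{n_*}\to\infty$ that come with $\tilde{\mathcal I}$ being a true cluster, and from these derives that the algorithm inequality actually holds at $m=n_*$, contradicting the maximality $n_l=\max\mathcal{I}_l$. This sidesteps your vague ``summing $\dot x_j-\dot x_k$'' step entirely.

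In short: replace your ``apply Lemma~\ref{critical} to the sub-ensemble'' with the direct center-of-mass contradiction, and replace your direct (b) drift with the argument that any strictly larger true cluster would itself satisfy the algorithm condition. Both pieces then decouple and the induction closes.
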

\begin{proof}
The proof is almost similar as in \cite{H-K-P-Z}. But since we consider singular case and thus the potential function $\Phi$ is very different from regular case, we will keep the details of the proof for convenience and put it in Appendix \ref{p-f-predictability}.\newline
\end{proof}

\vspace{0.05cm}
\subsection{Multi-cluster formation in second order C-S model}
In this subsection, we discuss the multi-cluster flocking for the second-order C-S model with communication rate $\psi(s) = \frac{1}{|s|^\beta}$ where $\beta > 1$. Due to the equivalent relation between the first-order system \eqref{B5} and second-order system \eqref{A1}, the results on multi-cluster flocking for second-order system are almost the same as what are obtained for first-order system. The reason why we emphasize the second-order model \eqref{A1} is that when we start from the model \eqref{A1} to construct natural velocities $\nu_i$ via the relation $\eqref{B5}_2$, the natural velocities $\nu_i$ depends on the coupling strength $\kappa$ as well. Therefore, for the second-order model \eqref{A1}, as we changes $\kappa$ to construct the critical value of coupling strength, the natural velocities will also change accordingly. Thus, the results for second-order model \eqref{A1} are slightly different from the first-order model \eqref{B5}. Now, we present the sufficient and necessary condition of the emergence of multi-cluster formation for second-order model \eqref{A1} with $\beta > 1$.

\begin{theorem}\label{T4.3}
Let $X$ be a solution to system \eqref{A1} with $\beta >1$ and
 initial configuration $(X^0,V^0)$ satisfying the following properties
\[\sum_{i=1}^N x_i^0 = 0,\quad \sum_{i=1}^N v_i = 0,\quad x_i^0\neq x_j^0,\quad 1\leq i\neq j\leq N.\]
Then the following assertions hold:
\begin{enumerate}[(i)]
\item 
Asymptotical $p$-cluster formation emerges for the second-order model \eqref{A1} with $\beta >1$ if and only if asymptotical $p$-cluster formation emerges for the model \eqref{B5}.
\item
Moreover, the asymptotic $p$-cluster formation $\bigsqcup_{i=1}^{p} \mathcal{I}_i$ with $|\mathcal{I}_i| = n_i - n_{i-1}$ can also be constructed by the same algorithm \eqref{f-algorithm} and the group velocity $v_i^\infty$ for $\mathcal{I}_i$ can be expressed by the following formula:
\begin{equation*}
\begin{aligned}
v_i^\infty = \frac{1}{n_i - n_{i-1}} \sum_{k = n_{i-1} + 1}^{n_i} v_i^0  - \frac{\kappa}{(n_i - n_{i-1})N} \sum_{k = n_{i-1} + 1}^{n_i} \underset{j \ne i}{\sum_{j=1}^N} \Phi(x_j^0 - x_i^0) +\frac{\kappa(N - n_i - n_{i-1})}{N} \Phi^\infty.
\end{aligned}
\end{equation*}
\end{enumerate}
\end{theorem}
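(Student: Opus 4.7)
The plan is to derive Theorem \ref{T4.3} as a direct consequence of the first-order reduction in Section \ref{sec:2.1} together with Theorem \ref{T4.2}. Since $\beta > 1$ guarantees collision avoidance, the second-order C-S system \eqref{A1} with initial data $(X^0, V^0)$ satisfying $x_i^0 \ne x_j^0$ admits a unique classical solution whose spatial trajectory $X(t)$ coincides with that of the first-order system \eqref{B5} provided we set
\[\nu_i := v_i^0 - \frac{\kappa}{N} \underset{k \ne i}{\sum_{k=1}^N} \Phi(x_k^0 - x_i^0), \quad i = 1, \ldots, N.\]
Because Definition \ref{D2.2} only refers to the spatial variables $x_i(t)$, this equivalence immediately implies that the cluster structure of the second-order model is exactly the cluster structure of the corresponding first-order model, so assertion (i) follows at once.

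For assertion (ii), I would feed the natural velocities $\{\nu_i\}$ defined above and the coupling strength $\kappa$ into the clustering algorithm \eqref{f-algorithm} and invoke Theorem \ref{T4.2}. The algorithm produces a partition $\{\mathcal{I}_i\}_{i=1}^{N_c(\kappa)}$ of $\{1, \ldots, N\}$ into maximal asymptotic clusters, and Theorem \ref{T4.2}(ii) identifies the asymptotic group velocity as
\[v_i^\infty = \bar{\nu}^{(n_{i-1}, n_i]} + \frac{\kappa(N - n_i - n_{i-1})}{N} \Phi^\infty.\]
Expanding $\bar{\nu}^{(n_{i-1}, n_i]} = \frac{1}{n_i - n_{i-1}} \sum_{k = n_{i-1}+1}^{n_i} \nu_k$ and substituting the defining expression of $\nu_k$ yields the explicit formula in the statement; this is a routine algebraic substitution.

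The principal subtlety, rather than a computational obstacle, is that in the second-order setting the natural velocities $\nu_i$ themselves depend on $\kappa$ through the initial data. Consequently, varying $\kappa$ in \eqref{A1} changes both the algorithm input and the weighting factor $\frac{\kappa(N - n_i - n_{i-1})}{N}\Phi^\infty$, so the partition $\{\mathcal{I}_i\}$ and the critical threshold separating mono-cluster from multi-cluster behavior depend on $\kappa$ in a more intricate way than in the first-order model \eqref{B5}. The algorithm \eqref{f-algorithm} nevertheless remains well-defined because it operates on a prescribed pair $(\{\nu_i\}, \kappa)$ without requiring any monotone $\kappa$-dependence of the input, and Theorem \ref{T4.2} applies verbatim once this pair has been fixed. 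No additional analytical input beyond the first-order reduction and Theorem \ref{T4.2} is needed.
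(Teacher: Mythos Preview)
Your proposal is correct and follows essentially the same approach as the paper: invoke the equivalence between the second-order model \eqref{A1} and the first-order reduction \eqref{B5}, then apply Theorem \ref{T4.2} and substitute the defining relation $\nu_i = v_i^0 - \frac{\kappa}{N}\sum_{k\ne i}\Phi(x_k^0 - x_i^0)$ into the group-velocity formula. Your remark on the $\kappa$-dependence of the natural velocities mirrors the paper's own discussion preceding the theorem and is not needed for the proof itself.
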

\begin{proof}
The proof of Theorem \ref{T4.3} can be directly obtained by the equivalence relation between the model \eqref{A1} and \eqref{B5}.  And by substituting $\nu_i$ in \eqref{B5} into the asymptotic group velocity formula in Theorem \ref{T4.2} we get the asymptotic group velocity $v_i^\infty$ expressed by above formula.\newline
\end{proof}

Next, we will construct the critical coupling strength for mono-cluster emergence in the second order system \eqref{A1} with $\beta>1$. Actually the natural velocities $\nu_i$ given by \eqref{B5} can be rewritten as 
\[\frac{\nu_i(X^0, V^0, \kappa)}{\kappa} = \frac{v_i^0}{\kappa} - \frac{1}{N} \underset{k \ne i}{\sum_{k=1}^N} \Phi(x_k^0 - x_i^0).\]
It's obvious that when $\kappa$ is  large enough, the part of the relative initial position is dominant, whereas the initial velocity is dominant part when $\kappa$ is small enough. This leads to the different value of natural velocities for different $\kappa$, and thus the cluster formation from algorithm \eqref{f-algorithm} changes along with $\kappa$. Fortunately, according to \eqref{B5}, the natural velocity $\nu_i$ linearly depends on the coupling strength $\kappa$. Therefore, the critical coupling strength $\kappa_c$ for emergence of mono-cluster in the second order system \eqref{A1} can be determined in a similar way as before.

\begin{corollary}\label{C4.1}
Let $X$ be a solution to system \eqref{A1} with $\beta >1$ and
 initial configuration $(X^0,V^0)$ satisfying the following properties
\[\sum_{i=1}^N x_i^0 = 0,\quad \sum_{i=1}^N v_i = 0,\quad x_i^0\neq x_j^0,\quad 1\leq i\neq j\leq N.\]
Then we can find a positive constant $\kappa_c$ such that
\begin{equation*}
\begin{cases}
\displaystyle \text{mono-cluster formation}, \quad \kappa \in (\kappa_c, +\infty), \\
\displaystyle \text{multi-cluster formation}, \quad \kappa \in (0, \kappa_c].
\end{cases}
\end{equation*}
\end{corollary}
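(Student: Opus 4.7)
The plan is to reduce to the first-order model \eqref{B5}, apply the critical coupling strength characterization from Lemma \ref{critical}, and then track the $\kappa$-dependence introduced by the reduction map itself.

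First I would recall from \eqref{B5} that the natural velocities induced by the second-order data $(X^0,V^0)$ via the first-order reduction are
\[
\nu_i(\kappa) = v_i^0 - \frac{\kappa}{N} \sum_{k \ne i} \Phi(x_k^0 - x_i^0), \qquad i = 1, \ldots, N,
\]
so the ingredients entering the first-order critical formula \eqref{f-kappa_c} depend linearly and explicitly on $\kappa$. By Theorem \ref{T4.3}(i) mono-cluster flocking for \eqref{A1} is equivalent to mono-cluster flocking for \eqref{B5} with these $\nu_i(\kappa)$, and by Lemma \ref{critical} the latter happens exactly when
\[
\kappa > \kappa_c(\nu(\kappa)) = \max_{1 \le l \le N-1} \left\{ -\frac{\frac{1}{l} \sum_{i=1}^l \nu_i(\kappa)}{\frac{N-l}{N} \Phi^\infty}\right\}.
\]

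Next, substituting $\nu_i(\kappa)$ and writing $V_l := \frac{1}{l}\sum_{i=1}^l v_i^0$ together with $B_l := \frac{1}{l}\sum_{i=1}^l \sum_{k\ne i}\Phi(x_k^0-x_i^0)$, the inequality $\kappa > \kappa_c(\nu(\kappa))$ rearranges, for each $1 \le l \le N-1$, into the linear-in-$\kappa$ inequality
\[
\kappa \cdot \frac{(N-l)\Phi^\infty - B_l}{N} > -V_l.
\]
The step I expect to be the real obstacle is verifying that the coefficient $(N-l)\Phi^\infty - B_l$ is \emph{strictly positive} for every $l$, so that each inequality genuinely rearranges into a lower bound on $\kappa$ rather than an upper bound or a vacuous statement. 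For this I would exploit the antisymmetry $\Phi(-x) = -\Phi(x)$, which kills the contribution from pairs with both indices in $\{1,\dots,l\}$ and reduces $B_l$ to $\frac{1}{l}\sum_{i=1}^l \sum_{k=l+1}^N \Phi(x_k^0 - x_i^0)$; since the initial positions are distinct and $\Phi(r) < \Phi^\infty$ for every finite $r > 0$, each summand is strictly less than $\Phi^\infty$, yielding $B_l < (N-l)\Phi^\infty$ as required.

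Once the positivity of the coefficient is secured, the condition $\kappa > \kappa_c(\nu(\kappa))$ is equivalent to $\kappa > \kappa_c^*$ with
\[
\kappa_c^* := \max\left\{0,\ \max_{1 \le l \le N-1} \frac{-N V_l}{(N-l)\Phi^\infty - B_l}\right\},
\]
which provides the desired threshold for the second-order model, mono-cluster flocking occurring on $(\kappa_c^*, +\infty)$. The complementary regime $\kappa \in (0,\kappa_c^*]$ then produces multi-cluster formation either directly from Lemma \ref{critical} and Lemma \ref{non-oscillatory} (which force $N_c(\kappa) \ge 2$) or, equivalently, from the clustering algorithm \eqref{f-algorithm} applied to the $\kappa$-dependent natural velocities via Theorem \ref{T4.2}. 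Apart from the strict-positivity check on $(N-l)\Phi^\infty - B_l$, the argument is essentially an algebraic unwinding of Lemma \ref{critical} combined with the explicit affine dependence of $\nu_i$ on $\kappa$.
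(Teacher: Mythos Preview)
Your proposal is correct and rests on the same ingredients as the paper's proof—the first-order reduction \eqref{B5}, the criterion of Lemma \ref{critical}, the antisymmetry cancellation, and the strict bound $\Phi(r)<\Phi^\infty$ for finite $r$—but it organizes them more directly. The paper defines $\kappa_c$ as the infimum of the set where the inequalities \eqref{E-14} hold and then, via an $\varepsilon$-approximation $\kappa_c(\varepsilon)\in[\kappa_c,\kappa_c+\varepsilon)$ together with a case split on the sign of $-\sum_{i=1}^l v_i^0$, argues that this set is an upper ray. You instead solve the problem algebraically: after substituting $\nu_i(\kappa)$ into the Lemma \ref{critical} condition you isolate $\kappa$ on one side, and the key strict positivity $(N-l)\Phi^\infty-B_l>0$ (which is precisely the paper's inequality $\frac{1}{N}\sum_{i=1}^l\sum_{k=l+1}^N\Phi(x_k^0-x_i^0)<\frac{(N-l)l}{N}\Phi^\infty$ in disguise) shows the condition is genuinely a lower bound on $\kappa$, yielding a closed-form threshold $\kappa_c^*$ without any limiting procedure or case distinction. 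Your route thus gives a bit more: an explicit formula for the critical coupling, at the cost of nothing beyond what the paper already uses.
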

\begin{proof}
We follow the first order reduction in Section \ref{sec:2} to construct the first order model and corresponding natural velocities $\nu_i$. Then the natural velocities $\{ \nu_i \}$ and coupling strength $\kappa$ satisfy the following properties for large $\kappa$,
\begin{equation}\label{E-14}
\sum_{i =1}^N \nu_i = 0, \quad \frac{- \sum_{i = 1}^l \nu_i}{\kappa} < \frac{(N-l)l}{N} \Phi^\infty, \quad 1 \le l \le N-1.
\end{equation}
Therefore, according to the algorithm \eqref{f-algorithm}, mono-cluster flocking emerges for large enough $\kappa$. Then we define the critical value of coupling strength as follows
\[\kappa_c := \inf \{\kappa \ :\  \eqref{E-14} \ \text{holds}\}. \]
If $\kappa\leq \kappa_c$, then we directly apply the definition of $\kappa_c$ and Theorem \ref{T4.2} to obtain that mono-cluster formation will not occur. On the other hand, for any $\kappa > \kappa_c$, we will show \eqref{E-14} always holds and then we can combine Theorem \ref{T4.2} to conclude the emergence of mono-cluster formation. In fact, from the definitioin of $\kappa_c$, for any positive constant $\varepsilon$, we can find a $\kappa_c(\varepsilon)$ satisfying \eqref{E-14} such that
\[\kappa_c \le \kappa_c(\varepsilon) < \kappa_c + \varepsilon.\] 
Then at $\kappa_c(\varepsilon)$, \eqref{E-14} holds with respect to the natural velocities at $\kappa_c(\varepsilon)$. That is, for all $1 \le l \le N-1$,
\[ \frac{-\sum_{i =1}^l (v_i^0 - \frac{\kappa_c(\varepsilon)}{N} \sum_{k \ne i}\Phi(x_k^0 - x_i^0))}{\kappa_c(\varepsilon)} = \frac{- \sum_{i = 1}^l \nu_i}{\kappa_c(\varepsilon)} < \frac{(N-l)l}{N} \Phi^\infty.\]
We rewrite the left-hand side of above formula as below,
\begin{equation*}
\begin{aligned}
\frac{- \sum_{i = 1}^l \nu_i}{\kappa_c(\varepsilon)} = \frac{- \sum_{i=1}^l v_i^0}{\kappa_c(\varepsilon)} + \frac{1}{N} \sum_{i=1}^l \sum_{k \ne i} \Phi(x_k^0 - x_i^0) = \frac{- \sum_{i=1}^l v_i^0}{\kappa_c(\varepsilon)} + \frac{1}{N} \sum_{i=1}^l \sum_{k = l+1}^N \Phi(x_k^0 - x_i^0).
\end{aligned}
\end{equation*}
Then, we consider two cases, respectively.\newline

\noindent $\diamond$ (Case 1):~If $- \sum_{i=1}^l v_i^0 \le 0$, then for any $\kappa \ge \kappa_c(\varepsilon)$, we obviously have that the natural velocities at $\kappa$ satisfy
\[\frac{-\sum_{i =1}^l (v_i^0 - \frac{\kappa}{N} \sum_{k \ne i}\Phi(x_k^0 - x_i^0))}{\kappa} \le \frac{1}{N} \sum_{i=1}^l \sum_{k = l+1}^N \Phi(x_k^0 - x_i^0) < \frac{(N-l)l}{N} \Phi^\infty.\]

\noindent $\diamond$ (Case 2):~If $- \sum_{i=1}^l v_i^0 > 0$, then for any $\kappa \ge \kappa_c(\varepsilon)$, we exploit the monotone decreasing property of $\frac{- \sum_{i=1}^l v_i^0}{\kappa}$ with respect to $\kappa$ to obtain that
\[ \frac{-\sum_{i =1}^l (v_i^0 - \frac{\kappa}{N} \sum_{k \ne i}\Phi(x_k^0 - x_i^0))}{\kappa} \le \frac{- \sum_{i=1}^l v_i^0}{\kappa_c(\varepsilon)} + \frac{1}{N} \sum_{i=1}^l \sum_{k = l+1}^N \Phi(x_k^0 - x_i^0) < \frac{(N-l)l}{N} \Phi^\infty.\]
Combining results of two cases, we obtain that for any $\kappa \ge \kappa_c(\varepsilon)$, \eqref{E-14} holds with respect to the natural velocities at $\kappa$. Then by the arbitrary choice of $\varepsilon$, we obtain that \eqref{E-14} holds for any $\kappa > \kappa_c$, thus mono-cluster flocking occurs.

\end{proof}

\begin{figure}[b]
\centering
\includegraphics[width=0.5\textwidth]{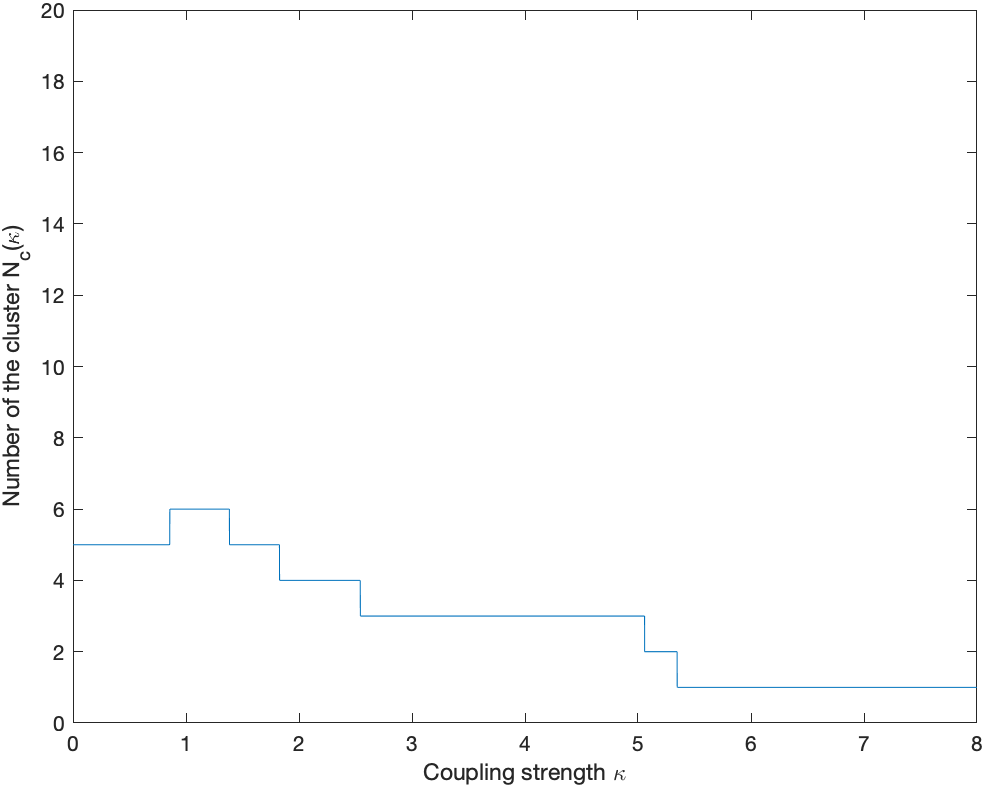}
\caption{Number of clusters for second order model}
\label{Fig3}
\end{figure}
Figure \ref{Fig3} shows the number of clusters of second order C-S model for different $\kappa$. Different from the first order model, the number of clusters may increase and decrease when $\kappa$ increases from zero to positive infinity.

\vspace{0.3cm}
\section{Difference between regular and singular C-S model}\label{sec:5}
\setcounter{equation}{0}
\vspace{0.3cm}
In this section, we will discuss the differences between C-S model with regular communication and singular communication. In fact, from the analysis in Section \ref{sec:3} and Section \ref{sec:4}, we already know some differences between the regular case and singular case. When $\beta<1$, we have analytic solution for any initial data and flocking occurs only when $t$ tends to infinity if the communication function is regular, while we only have the weak solution in the sense of Definition \ref{D2.1} and finite time flocking (sticking) may occur if the communication function is singular. On the other hand, when $\beta>1$, collision may occur in regular case and we show the collision avoidance in singular case. In the following, we will continue to show two differences between regular and singular C-S model. 

\subsection{Critical order of singularity: $\beta=1$}In this section, we discuss the dynamics of system \eqref{A1} in the case that $\beta =1$. In this case,  \cite{C-C-M-P} provided a proof of collision avoidance. Thus the solution to \eqref{A1} is analytic and we may apply the result of Proposition 4.1 in \cite{H-L} to obtain that there exists a positive constant $C_M$ such that the diameter of positions satisfies the following estimate:
\[D_x(t) = \max_{1 \le i, j \le N} |x_i(t) - x_j(t)| \le C_M, \quad t \ge 0.\]
This upper bound of the diameter of spatial variable immediately leads to the unconditional flocking. However, different from the regular case, there will be no collision for all time if the communication function is singular. Therefore, we apply the same arguments as in Lemma \ref{LD1} and \ref{LD2}, we can also show the uniform lower bound of the distance between particles in the critical case $\beta = 1$. More precisely, we have the following theorem for singular C-S model with $\beta=1$.

\begin{theorem}\label{T5.1}
Let $X$ be a solution to the second order equations \eqref{A1} with singular communication function and $\beta=1$, and suppose that the initial configuration $(X^0,V^0)$ satisfy the following properties
\[\sum_{i=1}^N x_i^0 = 0,\quad \sum_{i=1}^N v_i = 0,\quad x_i^0\neq x_j^0,\quad 1\leq i\neq j\leq N.\]
Then the following assertions hold:
\begin{enumerate}
\item 
There exist positive constants $C'_{m_1}$ and $C'_{M_1}$ such that
\begin{equation*}
 C'_{m_1}\leq  |x_i(t)-x_j(t)|\leq C'_{M_1},\quad 1\leq i\neq j\leq N,\quad t\geq 0. 
\end{equation*}
\item
Unconditional flocking occurs and there exists an equilibrium state $X^\infty=(x_1^\infty,\cdots,x_N^\infty)$ such that 
\[|X(t)-X^{\infty}|\leq C e^{-\kappa \psi(C'_{M_1})t},\quad |V(t)|\leq C' e^{-\kappa \psi(C'_{M_1})t}.\]
\end{enumerate}
\end{theorem}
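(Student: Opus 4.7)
The plan is to combine three ingredients: the collision-avoidance result of \cite{C-C-M-P}, the uniform upper bound on the diameter quoted just before Theorem \ref{T5.1} from Proposition 4.1 of \cite{H-L}, and adaptations of the inductive lower-bound proof in Lemmas \ref{LD1}--\ref{LD2} together with the exponential-decay argument of Lemma \ref{CL4}, all carried out for the critical potential $\Phi(x) = \mathrm{sgn}(x)\log|x|$ of \eqref{B3}.

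First I would record $D_x(t) \le C'_{M_1}$ directly from \cite{H-L}. For the uniform lower bound $C'_{m_1}$ on the pairwise distances I would rerun the induction of Section \ref{sec:4.1} on the ordered distances $D_1(t) \le \cdots \le D_{N(N-1)/2}(t)$. The base step reproduces Lemma \ref{LD1} with almost no change: since $\Phi(r) \to -\infty$ as $r \to 0^+$, one can choose a positive threshold $C_L$ such that $\dot D_x(t) \ge -D_\nu - (2\kappa/N)\Phi(D_x(t)) \ge 0$ whenever $D_x(t) \le C_L$, which keeps the diameter from collapsing. For the inductive step I would repeat the four-case analysis of Lemma \ref{LD2}. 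The only substantive modification occurs in Case 4: because now $\Phi^\infty = +\infty$, the original bound $\max\{|\Phi(C_1)|,|\Phi(+\infty)|\}$ is no longer finite, so I would replace it by $\max\{|\Phi(C_1)|,|\Phi(C'_{M_1})|\}$, which is finite thanks to the previously established upper bound. The resulting differential inequality $\dot D_p(t) \ge -D_\nu - (2\kappa/N)\Phi(D_p(t)) - 2\kappa \max\{|\Phi(C_1)|,|\Phi(C'_{M_1})|\}$ can still be solved for a strictly positive threshold via $\Phi^{-1}(C) = e^C$, because the logarithmic blow-up of $\Phi$ at $0^+$ is unchanged.

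For the exponential flocking statement I would follow Lemma \ref{CL4} almost verbatim. The uniform upper bound $D_x(t) \le C'_{M_1}$ and the monotone decrease of $\psi$ on $(0,\infty)$ give $\psi(|x_i(t) - x_j(t)|) \ge \psi(C'_{M_1})$ for all pairs and all $t$. Plugging this into the expression for $\dot v_M(t) - \dot v_m(t)$ in the second-order system \eqref{A1} yields the differential inequality $\dot D_v(t) \le -\kappa \psi(C'_{M_1}) D_v(t)$, hence $D_v(t) \le D_v(0) e^{-\kappa \psi(C'_{M_1}) t}$. Combined with the conservation $\sum_i v_i = 0$, this produces $|v_i(t)| \le C' e^{-\kappa \psi(C'_{M_1}) t}$, and integrating $\dot x_i = v_i$ in time together with the Cauchy criterion delivers an equilibrium $X^\infty$ with $|X(t) - X^\infty| \le C e^{-\kappa \psi(C'_{M_1}) t}$.

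The main obstacle is the replacement, in the lower-bound induction, of the finite asymptotic constant $\Phi^\infty = 1/(\beta-1)$ used for $\beta>1$ by the diameter bound $C'_{M_1}$ coming from \cite{H-L}: one has to verify that this substitution still preserves the sign structure of every term in the Case 4 estimate, so that the threshold constant defined through $\Phi^{-1}$ remains strictly positive. Once this routine bookkeeping is in place, the remainder of the argument is a direct transcription of the $\beta>1$ proofs, and no new ideas are needed.
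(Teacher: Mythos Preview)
Your proposal is correct and follows precisely the route the paper outlines in the paragraph preceding Theorem \ref{T5.1}: cite \cite{C-C-M-P} for collision avoidance, take the diameter bound from \cite{H-L}, rerun the inductive lower-bound argument of Lemmas \ref{LD1}--\ref{LD2}, and then repeat the exponential-decay computation of Lemma \ref{CL4}. You have in fact supplied more detail than the paper does, correctly isolating the one nontrivial adjustment---replacing the now-infinite $\Phi(+\infty)$ in the Case~4 estimate by $\Phi(C'_{M_1})$ using the a priori diameter bound---which is exactly what is needed to make the $\beta=1$ induction go through.
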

\vspace{0.2cm}
\subsection{Cluster formation for small coupling strength when $\beta>1$}
We then consider the situation when $\beta>1$ and the coupling strength $\kappa$ is sufficiently small. It is known in \cite{H-K-P-Z, H-P-Z} that, for regular case, $N$ cluster formation will occur when $\kappa$ is sufficiently small provided all natural velocities are different from each other. However, this is not true if the communication function is singular. Actually, we will provide an new algorithm to describe the cluster formation in this case.\newline 

Due to the equivalent relation between the system \eqref{B5} and the second-order system \eqref{A1}, for a given $\kappa$, we substitute $\nu_i$ constructed by the formula $\eqref{B5}_2$ into the clustering algorithm \eqref{f-algorithm} to obtain the parallel clustering algorithm for the second-order system:
\begin{equation}\label{second-algorithm}
\begin{cases}
\displaystyle n_0 = 0, \\
\displaystyle \mathcal{I}_l := \{n_{l-1} + 1\} \\ 
\hspace{0.5cm} \cup \left\{ m : \ \frac{1}{k - n_{l-1}} \sum_{j = n_{l-1} + 1}^k \widehat{v_j^0}^{(n_{l-1}, m]}  + \kappa A > 0, \quad \text{for all} \ n_{l-1} < k \le m-1\right\}, \\
 n_l = \max \mathcal{I}_l,
\end{cases}
\end{equation}
where we use the notations as in \eqref{local vp} for the inital velocity $v_i^0$ and for notational simplicity we set $A$ as below,
\[ A = - \frac{1}{(k - n_{l-1})N} \sum_{j= n_{l-1} +1}^k \underset{k \ne j}{\sum_{k=1}^N} \Phi(x_k^0 - x_j^0) + \frac{1}{N(m - n_{l-1})} \sum_{i = n_{l-1}+1}^m \underset{k \ne i}{\sum_{k=1}^N} \Phi(x_k^0 - x_i^0) + \frac{m-k}{N} \Phi^\infty.\]
On the other hand, we introduce another algorithm which depends on the initial velocities $\{v_i^0\}$ but is independent of the coupling strength $\kappa$:
\begin{equation}\label{small-algorithm}
\begin{cases}
\displaystyle n_0 = 0, \\
\displaystyle \mathcal{I}_l := \{n_{l-1} + 1\} \cup \left\{ m : \ \frac{1}{k - n_{l-1}} \sum_{j = n_{l-1} + 1}^k \widehat{v_j^0}^{(n_{l-1}, m]} > 0, \quad \text{for all} \ n_{l-1} < k \le m-1 \right\}, \\
\displaystyle n_l = \max \mathcal{I}_l.
\end{cases}
\end{equation}
We will show in the next lemma that the algorithm \eqref{second-algorithm} and \eqref{small-algorithm} are equivalent when $\kappa$ is sufficiently small. Therefore, the cluster formation will follow the algorithm  \eqref{small-algorithm} and only depends on initial velocities for sufficiently small $\kappa$. 

\begin{lemma}\label{no change}
Let $X$ be a solution to the second order equations \eqref{A1} with singular communication function and $\beta>1$, and suppose that the initial configuration $(X^0,V^0)$ satisfy the following properties
\begin{equation}\label{E3}
\sum_{i=1}^N x_i^0 = 0,\quad \sum_{i=1}^N v_i = 0,\quad x_i^0\neq x_j^0,\quad 1\leq i\neq j\leq N.
\end{equation}
For any given $\kappa$, assume that asymptotic $p_\kappa$-cluster formation $\bigsqcup_{l=1}^{p_\kappa} \mathcal{J}_l$ with $|\mathcal{J}_l| = \bar{n}_l - \bar{n}_{l-1}$ emerges from the algorithm \eqref{second-algorithm}. Moreover, the cluster formation constructed from the algorithm \eqref{small-algorithm}  is denoted by $\bigsqcup_{l=1}^q \mathcal{I}_l$ with $|\mathcal{I}_l| = n_l - n_{l-1}$. Then, for almost all initial data satisfying \eqref{E3}, we conclude for sufficiently small $\kappa$ that
\begin{equation}\label{E-15}
p_\kappa = q, \quad \text{and} \quad \mathcal{J}_l = \mathcal{I}_l, \ l = 1,\ldots,q.
\end{equation}
\end{lemma}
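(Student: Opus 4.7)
The plan is to run an induction on the cluster index $l$ and compare the two selection criteria directly, exploiting that they differ only by a term of order $\kappa$. Since $\bar n_0 = n_0 = 0$, it suffices to show that if $\bar n_0 = n_0, \ldots, \bar n_{l-1} = n_{l-1}$ then the sets $\mathcal{J}_l$ and $\mathcal{I}_l$ produced at the next step coincide. With the same starting index $n_{l-1}$ in both algorithms, the difference between the membership test in \eqref{second-algorithm} and the membership test in \eqref{small-algorithm} is exactly the single term $\kappa A$, where $A = A(l,m,k, X^0)$ is the bounded quantity displayed after \eqref{second-algorithm}.

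First I would bound $A$ uniformly. Since \eqref{E3} guarantees $x_j^0 \neq x_k^0$ for all $j \neq k$, each value $\Phi(x_k^0 - x_j^0)$ is a finite real number, and $\Phi^\infty = (\beta-1)^{-1}$ is finite. Therefore there exists a constant $M = M(X^0, N)$ such that
\[
|A(l,m,k,X^0)| \le M
\]
for every admissible triple $(l,m,k)$ with $1 \le l \le N$ and $n_{l-1} < k < m \le N$. Only finitely many such triples exist.

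Next I would handle the genericity. For each admissible triple set
\[
S(l,m,k) := \frac{1}{k - n_{l-1}} \sum_{j = n_{l-1} + 1}^k \widehat{v_j^0}^{(n_{l-1}, m]},
\]
which is a nontrivial linear functional of the initial velocities $V^0 = (v_1^0, \ldots, v_N^0)$. The locus $\{V^0 : S(l,m,k) = 0\}$ is therefore a hyperplane (a Lebesgue-null set) in the $(N-1)$-dimensional subspace of mean-zero initial velocities. Taking the finite union over all relevant triples still yields a null set, so outside a set of initial data of measure zero we have
\[
\delta := \min_{(l,m,k)} |S(l,m,k)| > 0.
\]
For such generic data, once $\kappa < \delta / M$, the perturbation $|\kappa A(l,m,k,X^0)| < \delta \le |S(l,m,k)|$ is dominated by $|S|$, so $\operatorname{sgn}(S + \kappa A) = \operatorname{sgn}(S)$ for every admissible triple. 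Consequently the two membership conditions single out exactly the same set of indices $m$, yielding $\mathcal{J}_l = \mathcal{I}_l$ and $\bar n_l = n_l$. The induction then closes and gives \eqref{E-15}.

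The only mild subtlety — which I expect to be the main bookkeeping obstacle rather than a genuine analytic difficulty — is the sequential nature of the algorithms: the bound $\delta$ must be chosen with respect to the partition $\{n_0,\ldots,n_q\}$ produced by \eqref{small-algorithm}, but since that partition depends only on $V^0$ and not on $\kappa$, and since there are only finitely many relevant triples once the partition is fixed, a single threshold $\kappa_0(X^0, V^0)$ works uniformly. The genericity exclusion (the finite union of hyperplanes in velocity space) justifies the qualifier \emph{for almost all initial data} in the statement.
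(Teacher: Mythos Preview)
Your argument is correct and rests on the same mechanism as the paper's proof: both observe that the two algorithms differ only by the bounded perturbation $\kappa A$, invoke genericity of $V^0$ so that every relevant average $S(l,m,k)$ is nonzero, and then choose $\kappa$ small enough that the sign of $S+\kappa A$ agrees with that of $S$. The paper packages this as a proof by contradiction---assuming a first index $l$ where $\mathcal{I}_l\neq\mathcal{J}_l$, showing $n_l<\bar n_l$, and then extracting a single $k_0$ with $S\le 0$ (hence $<0$ generically) to contradict $\bar n_l\in\mathcal{J}_l$---whereas you argue directly by induction with a uniform threshold $\kappa<\delta/M$; this is a difference of organization rather than of substance.
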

\begin{proof}
We prove that \eqref{E-15} holds by contradiction. Suppose \eqref{E-15} does not hold for $\kappa$ sufficiently small. Then there exists $l \ge 1$ such that
\begin{equation}\label{E-16}
\mathcal{I}_l \ne \mathcal{J}_l, \quad \text{and} \quad  \mathcal{I}_j = \mathcal{J}_j, \ 1 \le j \le l-1.
\end{equation}
Due to the relation \eqref{E-16}, we have $n_{l-1} = \bar{n}_{l-1}$.
Thus, the group $\mathcal{J}_l$ can be rewritten as $\mathcal{J}_l = (n_{l-1}, \bar{n}_l]$ where $\bar{n}_l \ne n_l $. We claim that $n_l  <  \bar{n}_l$. Indeed, from algorithm \eqref{small-algorithm}, for the group $\mathcal{I}_l$ we have the following relations
\[\frac{1}{k - n_{l-1}} \sum_{j = n_{l-1} + 1}^k\widehat{v_j^0}^{(n_{l-1}, n_l]} > 0, \quad \text{for all} \  n_{l-1} < k \le n_l -1.\]
When $\kappa$ is sufficiently small, combing the above inequality and $\kappa > 0$, we have
\begin{equation}\label{E-17}
\frac{1}{k - n_{l-1}} \sum_{j = n_{l-1} + 1}^k\widehat{v_j^0}^{(n_{l-1}, n_l]} + \kappa A_1 \ge 0, \quad \text{for all} \ n_{l-1} < \kappa \le n_l -1,
\end{equation}
where
\[ A_1 = - \frac{1}{(k - n_{l-1})N} \sum_{j= n_{l-1} +1}^k \underset{k \ne j}{\sum_{k=1}^N} \Phi(x_k^0 - x_j^0) + \frac{1}{N(n_l - n_{l-1})} \sum_{i = n_{l-1}+1}^{n_l} \underset{k \ne i}{\sum_{k=1}^N} \Phi(x_k^0 - x_i^0) + \frac{n_l-k}{N} \Phi^\infty.\]
Then it is clear from the algorithm $\eqref{second-algorithm}_3$ that for the group $\mathcal{J}_l $ we have  $n_l \le \bar{n}_l$. Since $n_l \ne \bar{n}_l$, we obtain $n_l < \bar{n}_l$. However, from the definition of $n_l$ for group $\mathcal{I}_l$ in \eqref{small-algorithm} and the fact that $n_l < \bar{n}_l$, it is known that there exists $n_{l-1} < k_0 \le \bar{n}_l -1$ such that
\[\frac{1}{k_0 - n_{l-1}} \sum_{j = n_{l-1} + 1}^{k_0}\widehat{v_j^0}^{(n_{l-1}, \bar{n}_l]} \le 0.\]
Suppose the equality holds in above formula. Then the equality generates a lower dimensional hyperplane, which is of zero measure in $N$ dimensional phase space. Thus for almost all initial data, we have the following inequality
\[\frac{1}{k_0 - n_{l-1}} \sum_{j = n_{l-1} + 1}^{k_0}\widehat{v_j^0}^{(n_{l-1}, \bar{n}_l]} < 0.\]
Therefore, for $\kappa > 0$ enough small, we get
\begin{equation}\label{E-18}
\frac{1}{k_0 - n_{l-1}} \sum_{j = n_{l-1} + 1}^{k_0}\widehat{v_j^0}^{(n_{l-1}, \bar{n}_l]} + \kappa A_2(k_0) < 0,
\end{equation}
where $A_2(k)$ depending on the integer $k$ is expressed by the formula as follows
\[ A_2(k) = - \frac{1}{(k - n_{l-1})N} \sum_{j= n_{l-1} +1}^k \underset{k \ne j}{\sum_{k=1}^N} \Phi(x_k^0 - x_j^0) + \frac{1}{N(\bar{n}_l - n_{l-1})} \sum_{i = n_{l-1}+1}^{\bar{n}_l} \underset{k \ne i}{\sum_{k=1}^N} \Phi(x_k^0 - x_i^0) + \frac{\bar{n}_l-k}{N} \Phi^\infty.\]
However, for the group $\mathcal{J}_l$, it can be seen from the algorithm \eqref{second-algorithm} that the coupling strength $\kappa$ satisfies the following rules
\begin{equation}\label{E-19}
\frac{1}{k - n_{l-1}} \sum_{j = n_{l-1} + 1}^k\widehat{v_j^0}^{(n_{l-1}, \bar{n}_l]} + \kappa A_2(k) \ge 0, \quad \text{for all} \ n_{l-1} < k \le \bar{n}_l -1.
\end{equation}
Since the index $k_0$ satisfies $n_l < k_0 \le \bar{n}_{l} - 1$, \eqref{E-18} obviously contradicts to \eqref{E-19} for $\kappa$ sufficiently small. Thus, we derive the desired result.
\end{proof}
\begin{figure}[p]
\centering
\mbox{
\subfigure[Regular with $\kappa=0.01$]{\includegraphics[width=0.48\textwidth]{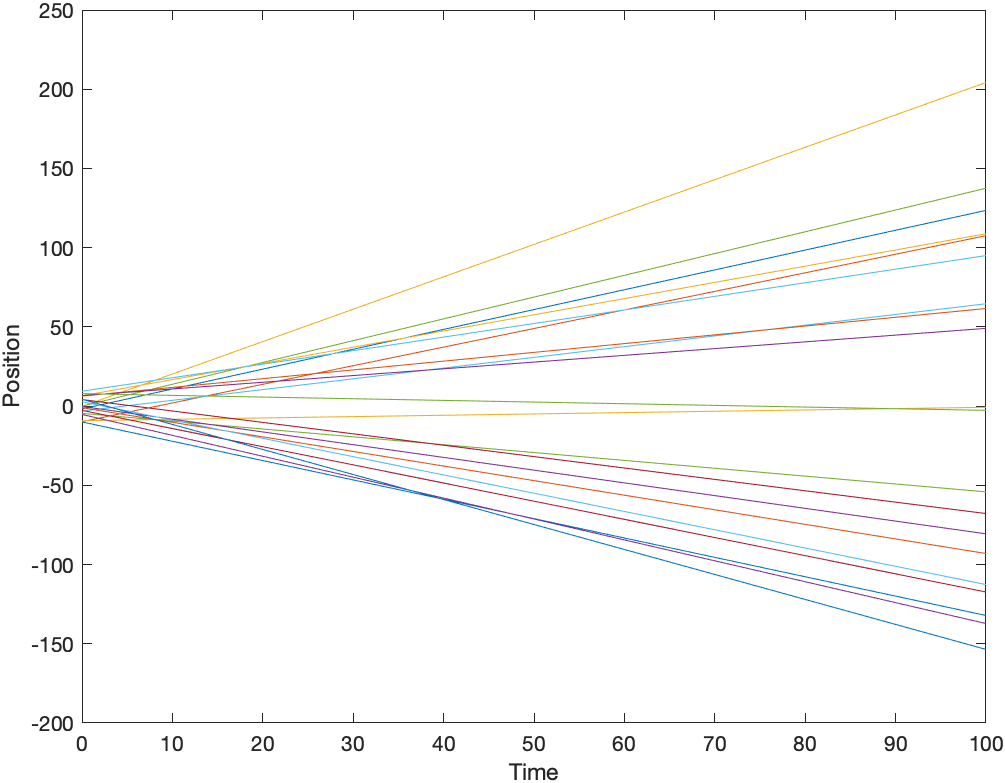}
\label{Fig2:p5}}
\hspace{0.1cm}
\subfigure[Singular with $\kappa=0.01$]{\includegraphics[width=0.48\textwidth]{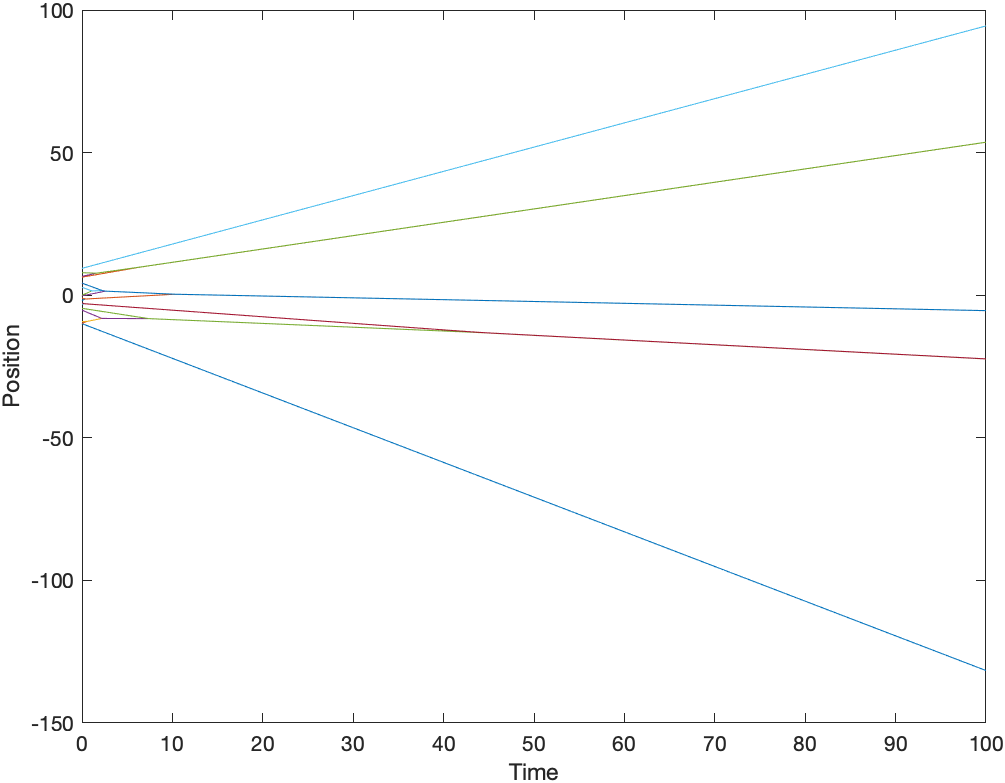}
\label{Fig2:v5}}
}
\centering
\mbox{
\subfigure[Regular with $\kappa=1.2$]{\includegraphics[width=0.48\textwidth]{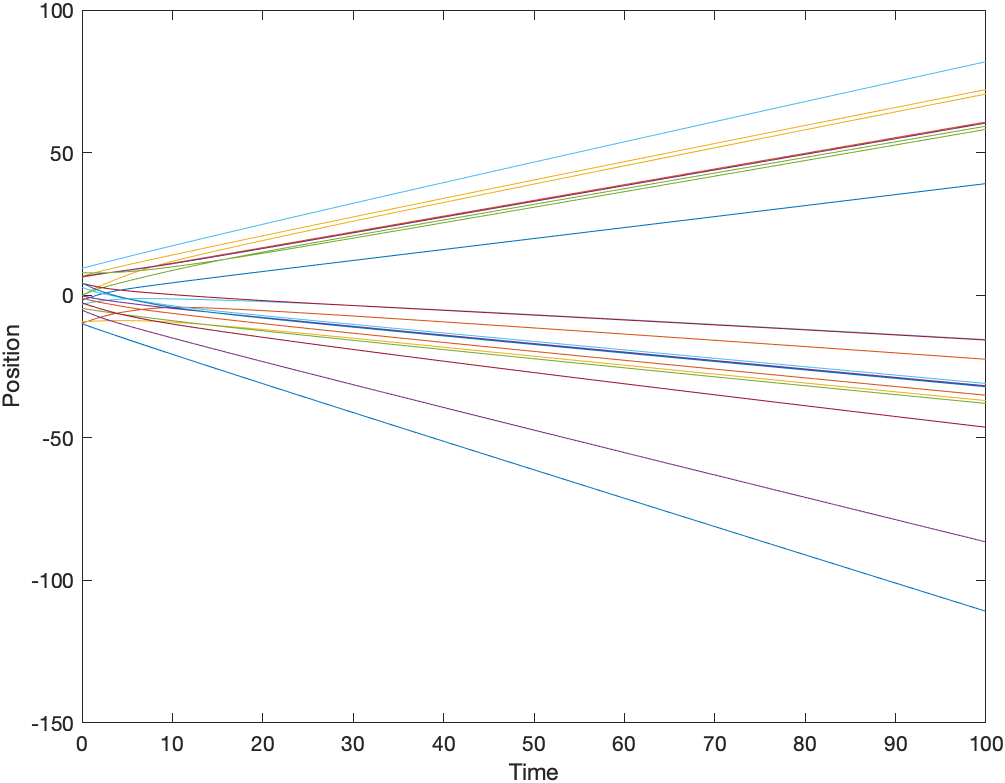}
\label{Fig2:p10}}
\hspace{0.1cm}
\subfigure[Singular with $\kappa=1.2$]{\includegraphics[width=0.48\textwidth]{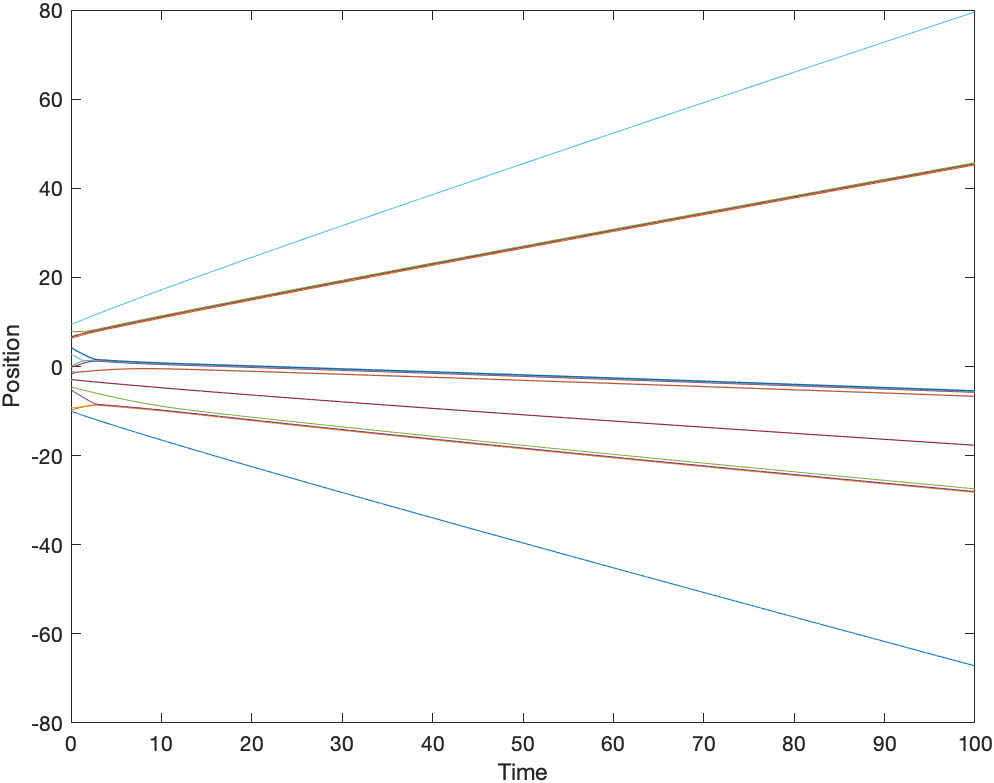}
\label{Fig2:v10}}
}
\centering
\mbox{
\subfigure[Number of cluster for regular]{\includegraphics[width=0.48\textwidth]{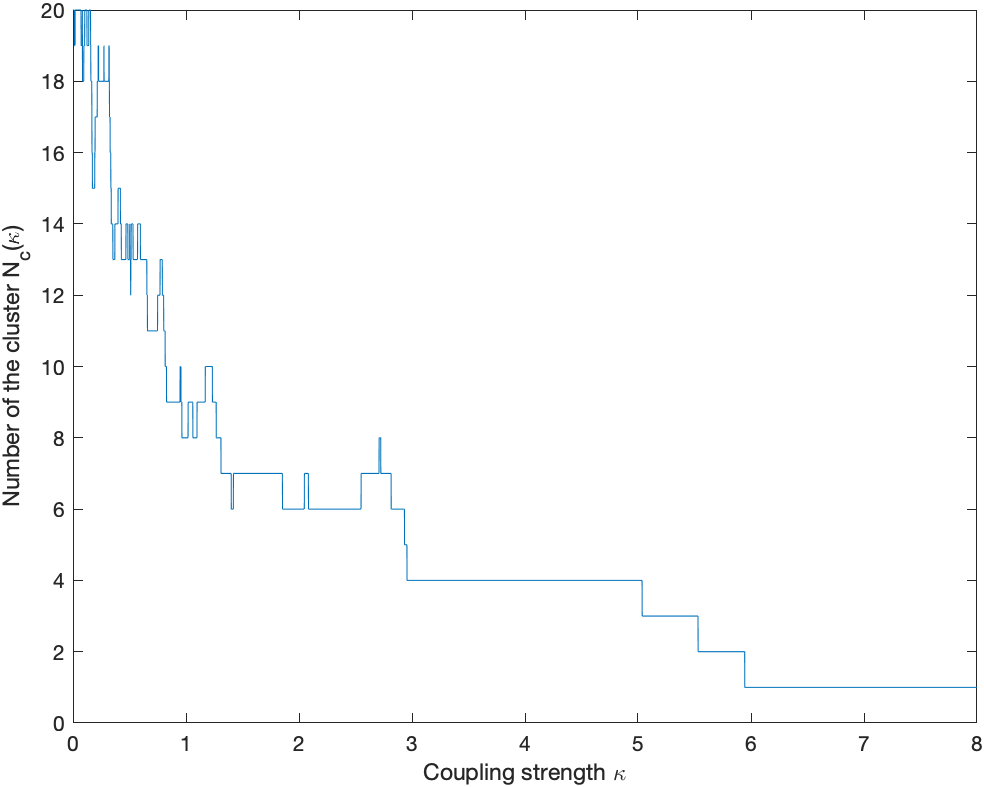}
\label{Fig2:p40}}
\hspace{0.1cm}
\subfigure[Number of cluster for singular]{\includegraphics[width=0.48\textwidth]{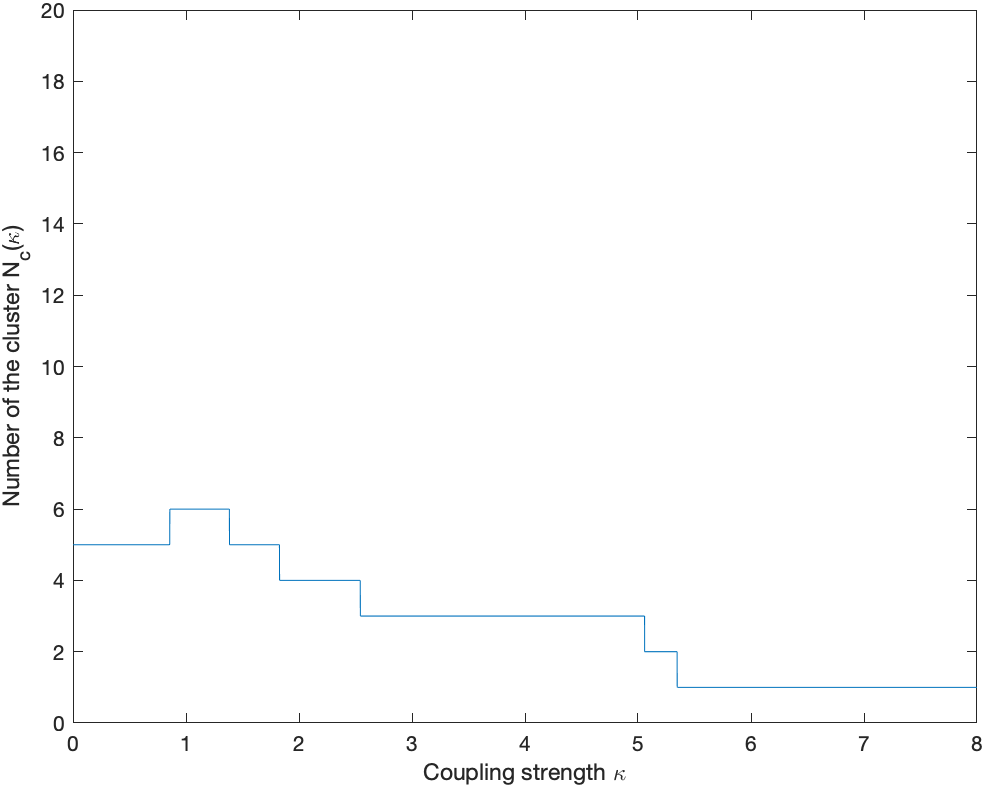}
\label{Fig2:v40}}
}
\caption{Comparison between regular and singular models}
\label{Fig:pv}
\end{figure}

\begin{remark}\label{R-no change}
Lemma \ref{no change} shows us the differences between the regular case and singular case.  In fact,  for second-order system \eqref{A1}, we know from Lemma \ref{no change} that, when $\kappa$ is sufficiently small, the asymptotic cluster formation is not going to change and can be determined by the algorithm \eqref{small-algorithm}. Note that how small the coupling strength $\kappa$ should depends on the initial position and velocity. Then if we let $v_N^0<\cdots<v_1^0$, it is obviously from the algorithm \eqref{small-algorithm} that mono-cluster flocking will occur if $\kappa>0$. This phenomenon is quite different from the regular case, in which $N$ cluster formation will always emerge for small $\kappa$ for almost all initial data. These differences are verified in Figure \ref{Fig:pv}, which shows the simulation of both regular and singular models with twenty oscillators.

\end{remark}

\vspace{0.3cm}

\section{Summary}\label{sec:6}
\setcounter{equation}{0}
\vspace{0.3cm}
In the present paper, we study the C-S model with singular communication weight on the real line. For $\beta<1$, we show the uniqueness of the weak solution in the sense of Definition \ref{D2.1} and provide the exponentially fast unconditional flocking. Moreover, we give an explicit analysis on the collision behavior depending on the natural velocities and conclude the sticking phenomenon occurs if and only if two natural velocities are identical. For $\beta>1$, we show the uniform-in-time lower bound of the relative distance between particles and provide the sufficient and necessary condition for the emergence of multi-cluster formation. For $\beta=1$, we show that unconditional flocking will occur and construct the uniform-in-time lower bound of the relative distance between particles. Therefore, we provide a complete classification of the asymptotical behavior of the C-S model with singular communication weight on the real line. So far, we do not know how to deal with the higher dimensional model, and this will be our future work.

\vspace{0.3cm}
\begin{appendix}
\section{Proof of Lemma \ref{non-oscillatory}}\label{p-non-oscillatory}
\setcounter{equation}{0}
\vspace{0.3cm}
Below, we will show that the configuration $\{x_i\}$ will form either a mono-cluster or multi-clusters asymptotically and will not oscillate when time tends to infinity. More precisely, we will verify that for every $i,j$, we have
\begin{equation*}
\begin{cases}
\displaystyle  \lim_{t \to +\infty} |x_i(t) - x_j(t)| < + \infty, \quad \text{if} \  x_i \ \text{and}\  x_j  \ \text{are in the same cluster} ; \\
\displaystyle \lim_{t \to +\infty} |x_i(t) - x_j(t)| = +\infty, \quad \text{if} \ x_i \ \text{and} \ x_j \ \text{are in the different cluster}.
\end{cases}
\end{equation*}
First, as we already prove the non-collision property for $\beta>1$. Therefore the the system \eqref{B5} admits an analytic solution and thus we can apply the arguments of \cite{H-Ts} to show that the velocity will not oscillate, i.e. 
\[\exists \lim_{t \to +\infty} v_i(t) = v_i^\infty \quad \text{for any} \ i =1,\ldots, N.\]
 Then we can apply the same idea from \cite{H-K-P-Z, H-P-Z} to show the uniqueness of equilibrium for the sub-system of \eqref{B5}.
 \begin{lemma}\label{equilibrium}
 For any given $1 \le m \le N$ and $\{\nu_i\}_{i = m}^{N}$, let $Y^\infty = (y_m^\infty, \ldots, y_N^\infty) \in \mathbb{R}^{N-m+1}$ be a solution to the following equilibrium system:
 \begin{equation}\label{F-1}
 \nu_i + \frac{\kappa}{N} \underset{k \ne i}{\sum_{k =m}^N} \Phi(y_k^\infty - y_i^\infty) = 0, \quad i = m, \ldots, N,
 \end{equation} 
 where $\sum_ {k =m}^N y_k^\infty = 0$ and $\Phi$ is defined as in \eqref{B3}. Then $Y^\infty$ is unique up to permutation.
 \end{lemma}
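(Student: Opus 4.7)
The strategy is to recognize \eqref{F-1} as the Euler--Lagrange equation for a strictly convex variational problem and deduce uniqueness from convexity. Let $\tilde{\Phi}$ denote a primitive of $\Phi$ on $(0,+\infty)$, extended to $\mathbb{R}\setminus\{0\}$ as an even function (which is consistent since $\Phi$ is odd), and define
\[
W(Y) := -\sum_{i=m}^N \nu_i y_i + \frac{\kappa}{2N}\sum_{\substack{i,k=m \\ i\ne k}}^N \tilde{\Phi}(y_k - y_i).
\]
A direct differentiation, using the oddness of $\Phi$, yields $\partial_{y_i} W = -\nu_i - \frac{\kappa}{N}\sum_{k\ne i}\Phi(y_k - y_i)$, so that \eqref{F-1} is precisely the equation $\nabla W(Y^\infty)=0$, restricted to the hyperplane $\Pi := \{Y : \sum_{i=m}^N y_i = 0\}$. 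Since $\Phi(x)\to -\infty$ as $x\to 0^+$, the singularity of $\Phi$ acts as a hard wall forcing every equilibrium into the open set $\Omega := \{Y : y_i\neq y_k \text{ for all } i\neq k\}$. The intersection $\Omega\cap\Pi$ decomposes into $(N-m+1)!$ convex ordering chambers.

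On $\Omega$ the Hessian of $W$ takes the explicit form
\[
H_{ii} = \frac{\kappa}{N}\sum_{k\ne i}\psi(|y_k - y_i|), \qquad H_{ij} = -\frac{\kappa}{N}\psi(|y_i - y_j|) \quad (i \ne j),
\]
which is the weighted Laplacian of the complete graph on $\{m,\ldots,N\}$ with strictly positive edge weights $\frac{\kappa}{N}\psi(|y_i-y_j|)$. Standard facts about graph Laplacians then give $H\succeq 0$ with one-dimensional kernel $\ker H = \mathrm{span}\{(1,\ldots,1)^T\}$. Because $\Pi$ is orthogonal (hence transverse) to this kernel, the restriction $W|_\Pi$ is strictly convex on each convex chamber of $\Omega\cap\Pi$, and a strictly convex function on a convex set admits at most one critical point. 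Thus in each chamber there is at most one solution to \eqref{F-1}.

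Finally, the functional $W$ is invariant under the simultaneous relabeling of the pairs $(\nu_i,y_i)$, so any two equilibria in distinct chambers must be obtained from one another by a permutation $\sigma$ of $\{m,\ldots,N\}$ that leaves the sequence $(\nu_m,\ldots,\nu_N)$ invariant. Combining this with the within-chamber uniqueness established above proves that $Y^\infty$ is unique up to permutation. The main obstacle I foresee is justifying the strict-convexity reduction in the presence of the singularity of $\Phi$ at the origin; however, this turns out to be a feature rather than a bug, because the singularity itself rules out coincidences and confines equilibria to $\Omega$, where the graph-Laplacian computation above is fully rigorous.
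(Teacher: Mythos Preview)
Your variational approach is correct for the core content---uniqueness within a fixed ordering chamber---and is genuinely different from the paper's. The paper argues by direct contradiction: assuming two ordered solutions $Y^\infty$ and $\bar Y^\infty$, it partitions the indices into $\mathcal A=\{i:\bar y_i^\infty>y_i^\infty\}$, $\mathcal B=\{i:\bar y_i^\infty<y_i^\infty\}$, $\mathcal C=\{i:\bar y_i^\infty=y_i^\infty\}$, sums the equilibrium equations over $\mathcal B$, and uses the monotonicity of $\Phi$ on each half-line (the common ordering guarantees that $y_i-y_k$ and $\bar y_i-\bar y_k$ always share a sign) to reach the self-contradiction $\sum_{i\in\mathcal B}\nu_i<\sum_{i\in\mathcal B}\nu_i$. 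Your route through the graph-Laplacian Hessian is more structural and makes the gradient-flow nature of \eqref{B5} transparent; the paper's argument is more elementary, using only first-order monotonicity rather than second-order convexity.

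Your final paragraph, however, contains a real gap. The invariance of $W$ under simultaneous relabeling of the pairs $(\nu_i,y_i)$ does \emph{not} force two equilibria for the same $\nu$ lying in distinct chambers to be related by a $\nu$-preserving permutation. The point is that $\Phi$ is increasing on each of $(-\infty,0)$ and $(0,+\infty)$ but jumps from $+\infty$ to $-\infty$ across the origin, so $\Phi(d)=c$ can have one root of each sign. Already for $N-m+1=2$ with $\nu_1=-1$, $\nu_2=1$ and $\kappa>2(\beta-1)$ one finds two zero-mean equilibria---one with $y_2-y_1>1$, another with $y_2-y_1\in(-1,0)$---that are not permutations of one another. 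This does not damage the application: in the paper both candidate limits inherit the fixed order $x_m<\cdots<x_N$ from the dynamics, so only within-chamber uniqueness is needed, and that is exactly what your strict-convexity argument (like the paper's monotonicity argument, which also tacitly compares solutions in the same chamber) delivers. You should therefore either restrict the conclusion to a single chamber or replace the final paragraph with a correct cross-chamber argument.
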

\begin{proof}
We prove by contradiction. Suppose $Y^\infty = (y_m^\infty, \ldots, y_N^\infty) \in \mathbb{R}^{N-m+1}$ and $\bar{Y}^\infty = (\bar{y}_m^\infty, \ldots, \bar{y}_N^\infty) \in \mathbb{R}^{N-m+1}$ are two different solutions of \eqref{F-1}. Then, we have
\begin{equation}\label{F-2}
\begin{aligned}
&\nu_i + \frac{\kappa}{N} \underset{k \ne i}{\sum_{k =m}^N} \Phi(y_k^\infty - y_i^\infty) = 0, \quad i = m, \ldots, N, \quad \\
&\nu_i + \frac{\kappa}{N} \underset{k \ne i}{\sum_{k =m}^N} \Phi(\bar{y}_k^\infty - \bar{y}_i^\infty) = 0, \quad i = m, \ldots, N.
\end{aligned}
\end{equation}
In view of permutation, without loss of generality, we may assume that the partial configurations $Y^\infty$ and $\bar{Y}^\infty$ are sorted as follows:
\begin{equation}\label{F-3}
y_m^\infty < y_{m+1}^\infty < \cdots < y_N^\infty, \qquad \bar{y}_m^\infty < \bar{y}_{m+1}^\infty < \cdots < \bar{y}_N^\infty.
\end{equation}
We introduce three index sets:
\[\mathcal{A} := \{i \ :\ \bar{y}_i^\infty > y_i^\infty\}, \qquad \mathcal{B} := \{i \ : \  \bar{y}_i^\infty <  y_i^\infty\}, \qquad \mathcal{C} := \{i \ : \  \bar{y}_i^\infty =  y_i^\infty\}.\]
Since $Y^\infty$ and $\bar{Y}^\infty$ are different, at least one of the sets $\mathcal{A}$ and $\mathcal{B}$ is non-empty. Without loss of generality, suppose the set $\mathcal{A}$ is non-empty. Then we claim that $\mathcal{B}$ is an empty set.  If not the case, $\mathcal{B} \ne \emptyset$. It is known from $\eqref{F-2}_2$ that for all $i \in \mathcal{B}$,
\[\nu_i = - \frac{\kappa}{N} \underset{k \ne i}{\sum_{k =m}^N} \Phi(\bar{y}_k^\infty - \bar{y}_i^\infty).\]
We take sum on above equation for all $i \in \mathcal{B}$ to obtain that
\begin{equation}\label{F-4}
\sum_{i \in \mathcal{B}} \nu_i = - \frac{\kappa}{N} \sum_{i \in \mathcal{B}} \underset{k \ne i}{\sum_{k =m}^N} \Phi(\bar{y}_k^\infty - \bar{y}_i^\infty) =- \frac{\kappa}{N} \sum_{i \in \mathcal{B}} \sum_{k \not\in \mathcal{B}} \Phi(\bar{y}_k^\infty - \bar{y}_i^\infty),
\end{equation}
where we apply the cancellation that $\sum_{i \in \mathcal{B}} \underset{k \ne i} {\sum_{k \in \mathcal{B}} } \Phi(\bar{y}_k^\infty - \bar{y}_i^\infty) = 0$. Since $i \in \mathcal{B}$ and $k \not\in \mathcal{B}$, we have $\bar{y}_i^\infty < y_i^\infty$ and $ \bar{y}_k^\infty \ge y_k^\infty$. Then we combine these inequalities and \eqref{F-3} to obtain that
\[\text{either} \quad 0 < \bar{y}_i^\infty - \bar{y}_k^\infty < y_i^\infty - y_k^\infty \quad \text{or} \quad \bar{y}_i^\infty - \bar{y}_k^\infty < y_i^\infty - y_k^\infty < 0.\]
Then according to the monoticity of $\Phi$, for $i \in \mathcal{B}$ and $k \not\in \mathcal{B}$ we have $\Phi(\bar{y}_i^\infty - \bar{y}_k^\infty) < \Phi(y_i^\infty - y_k^\infty )$. Thus, it follows from \eqref{F-4} that
\[\sum_{i \in \mathcal{B}} \nu_i = - \frac{\kappa}{N} \sum_{i \in \mathcal{B}} \sum_{k \not\in \mathcal{B}} \Phi(\bar{y}_k^\infty - \bar{y}_i^\infty) < - \frac{\kappa}{N} \sum_{i \in \mathcal{B}} \sum_{k \not\in \mathcal{B}} \Phi(y_i^\infty - y_k^\infty ) = \sum_{i \in \mathcal{B}} \nu_i.\]
This is clearly contradictory and thus we conclude $\mathcal{B} = \emptyset$. We next prove $\mathcal{A}$ is empty. If not, we have
\[ 0 = \sum_{i = m}^N \bar{y}_i^\infty > \sum_{i = m}^N  y_i^\infty = 0,\]
which is again a contradiction. Thus, we have $\mathcal{A}$ and $\mathcal{B}$ are empty and conclude that 
\[\bar{y}_i^\infty = y_i^\infty, \quad i = m,\ldots, N.\]
\end{proof}

Next, we will prove either mono-cluster or multi-clusters will emerge asymptotically and oscillation will never happen. For this, we first show the dichotomy of the asymptotic behavior of $|x_i - x_N|$.
\begin{lemma}\label{|x_i - x_N|}
Let $X$ be a solution of system \eqref{B5}. Then for each $i =1, \ldots, N$, we have the following dichotomy:
\[\text{either} \quad  \lim_{t \to \infty} |x_i(t) - x_N(t)| < \infty \quad \text{or} \quad
\lim_{t \to \infty} |x_i(t) - x_N(t)| = \infty.\]
\end{lemma}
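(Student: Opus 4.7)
The plan is to use the existence of velocity limits $v_j^\infty := \lim_{t\to\infty} v_j(t)$ (invoked just above via the Hendrickx--Tsitsiklis argument) together with the uniqueness-of-equilibrium result in Lemma \ref{equilibrium} to force $D_{iN}(t) := x_N(t) - x_i(t) \ge 0$ either to diverge to $+\infty$ or to converge. I split on the comparison of $v_i^\infty$ with $v_N^\infty$.

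\emph{Case 1: $v_i^\infty < v_N^\infty$.} Since $\dot D_{iN}(t) = v_N(t) - v_i(t) \to v_N^\infty - v_i^\infty > 0$, there exist $T,\varepsilon > 0$ with $\dot D_{iN}(t) \ge \varepsilon$ for all $t \ge T$, so $D_{iN}(t) \to +\infty$.

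\emph{Case 2: $v_i^\infty = v_N^\infty$.} Here I must show $D_{iN}(t)$ has a finite limit. Let $\mathcal{T} := \{j : v_j^\infty = v_N^\infty\}$; by the order preservation \eqref{order} and Case 1, $\mathcal{T}$ is contiguous, say $\mathcal{T} = \{m,m+1,\ldots,N\}$, and every particle outside $\mathcal{T}$ drifts to $-\infty$ relative to every particle inside $\mathcal{T}$, whence $\Phi(x_k(t) - x_j(t)) \to -\Phi^\infty$ for $k \notin \mathcal{T}$ and $j \in \mathcal{T}$. Introducing the centered variables $y_j(t) := x_j(t) - \bar x_{\mathcal{T}}(t)$ with $\bar x_{\mathcal{T}} = \frac{1}{|\mathcal{T}|}\sum_{l \in \mathcal{T}} x_l$, using the oddness of $\Phi$ to kill the mean of the within-group interactions, and absorbing the convergent outside-group terms, the reduced dynamics for $j \in \mathcal{T}$ becomes
\begin{equation*}
\dot y_j(t) = (\nu_j - \bar\nu_{\mathcal{T}}) + \frac{\kappa}{N}\underset{k \ne j}{\sum_{k \in \mathcal{T}}} \Phi(y_k(t) - y_j(t)) + r_j(t), \qquad \sum_{j \in \mathcal{T}} y_j \equiv 0,
\end{equation*}
where $r_j(t) \to 0$. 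The limiting autonomous system is exactly the one in Lemma \ref{equilibrium} (with $m$ as above and shifted natural velocities $\nu_j - \bar\nu_{\mathcal{T}}$), hence admits a unique zero-mean solution $Y^\infty$.

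To pass from the asymptotic autonomy to genuine convergence I will first establish boundedness of $y$ by revisiting the diameter estimate of Lemma \ref{CL3}: the diameter of $\{y_j\}_{j\in\mathcal{T}}$ satisfies a differential inequality analogous to \eqref{C13} but with the vanishing perturbation $r_j$, and combined with the uniform lower bound from Theorem \ref{TD1} this yields $\sup_t \max_{j \in \mathcal{T}}|y_j(t)| < \infty$. Given boundedness, I conclude by an $\omega$-limit argument: any cluster point $Y^*$ of $\{y(t_n)\}$ along $t_n \to \infty$ extends, after a time shift, to a bounded entire orbit of the limit autonomous system, whose only bounded orbit must be the unique equilibrium $Y^\infty$ from Lemma \ref{equilibrium}; therefore $y(t) \to Y^\infty$ and $D_{iN}(t) = y_N(t) - y_i(t) \to Y_N^\infty - Y_i^\infty < \infty$.

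The main obstacle is the convergence step in Case 2: asymptotic autonomy plus boundedness does not by itself rule out slow drift around the equilibrium, so the argument must lean essentially on uniqueness (Lemma \ref{equilibrium}) applied to every subsequential limit, together with a careful check that the limit system has no non-constant bounded orbits. Establishing the a priori diameter bound for the centered variables $y_j$ is the other delicate point, since the forcing $r_j$ is merely $o(1)$ rather than integrable, and one must use the attractive structure of $\Phi$ together with the collision-avoidance lower bound to close the estimate.
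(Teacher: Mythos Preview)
Your overall architecture—split on $v_i^\infty$ versus $v_N^\infty$, and in the equal-velocity case reduce to an asymptotically autonomous subsystem whose unique equilibrium is supplied by Lemma \ref{equilibrium}—is reasonable in spirit, but the heart of Case 2 has a genuine gap.

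The boundedness step does not go through as written. Lemma \ref{CL3} and inequality \eqref{C13} live in the regime $\beta<1$, where the antiderivative $\Psi$ is \emph{unbounded} and therefore supplies an arbitrarily strong restoring force once the diameter is large. In the present setting $\beta>1$, the potential $\Phi$ is bounded, $\Phi(x)\to\Phi^\infty<\infty$; the analogue of \eqref{C13} for the $\mathcal{T}$-diameter is at best $\dot D \le D_\nu^{\mathcal{T}} - c\,\kappa\,\Phi(D)+o(1)$, which yields no upper bound on $D$ unless one already knows $D_\nu^{\mathcal{T}} < c\,\kappa\,\Phi^\infty$—a sub-ensemble mono-cluster condition you have no right to assume. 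Relatedly, your assertion that in Case 2 ``$D_{iN}(t)$ has a finite limit'' is too strong: equal asymptotic velocities $v_i^\infty=v_N^\infty$ are compatible with $|x_i-x_N|\to\infty$, so Case 2 must itself allow both branches of the dichotomy. Without a priori boundedness the $\omega$-limit argument does not start, and the further claim that the limit autonomous system has no non-constant bounded entire orbit is also not established (it essentially presupposes mono-cluster flocking for the subsystem).

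The paper circumvents all of this by never seeking global boundedness. It argues by contradiction: assuming some $|x_j-x_N|$ oscillates, it works with the set $E_2=\{i:\liminf_{t\to\infty}|x_N-x_i|<\infty\}$ (which is contiguous by order preservation), and selects two time sequences along which the $E_2$-diameter is bounded \emph{by construction}—once taking values near the $\liminf$, once near $\min\{\limsup,\,\liminf+1\}$. Along each sequence one extracts convergent sub-subsequences of all pairwise distances in $E_2$; passing to the limit in the velocity equation (using $v_k\to v_N^\infty$ for $k\in E_2$ and $\Phi(x_k-x_j)\to-\Phi^\infty$ for $k\notin E_2$) shows that each limiting configuration solves the equilibrium system of Lemma \ref{equilibrium}. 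The two limits differ in at least one coordinate, contradicting uniqueness. The point is that Lemma \ref{equilibrium} is applied to \emph{subsequential} limits, so compactness is needed only along chosen sequences, not uniformly in $t$. If you want to salvage your approach, replace the global boundedness attempt by this subsequence-based use of Lemma \ref{equilibrium}.
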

\begin{proof}
If not the case, there exists at least one $j\in \mathcal{N}$ satisfying
\begin{equation}\label{F-5}
\limsup_{t \to +\infty} |x_j(t) - x_N(t)| \ne \liminf_{t \to +\infty} |x_j(t) - x_N(t)|.
\end{equation}
We construct two sets:
\begin{equation}\label{F-6}
\begin{cases}
\displaystyle E_1 := \{i \ : \ \liminf_{t \to +\infty}  |x_N(t) - x_i(t)| = +\infty\}, \\ 
\displaystyle E_2 := \{i \ : \ \liminf_{t \to +\infty} |x_N(t) - x_i(t)| < + \infty\}.
\end{cases}
\end{equation}
Obviously, $E_1 \cup E_2 = \{1,2,\ldots, N\}$. For $i \in E_1$, we have $\lim\limits_{t \to +\infty} |x_N(t) - x_i(t)| = + \infty$.
Therefore, for $j$ satisfying the relation \eqref{F-5}, we obtain $j \in E_2$. Moreover, the limit of the velocity always exists according to \cite{H-Ts}. Therefore if $\lim\limits_{t \to +\infty} v_i(t) \ne \lim\limits_{t \to +\infty} v_N(t)$, we have $i \in E_1$. Thus, for $i \in E_2$, we have 
\begin{equation}\label{F-7}
\lim_{t \to +\infty} v_i(t) = \lim_{t \to +\infty} v_N(t).
\end{equation}
If $E_2 = \emptyset$, we derive the desired result. Otherwise, we set $m := \min E_2$. Then, according to the well order of spatial variable and the definition of the sets $E_1$ and $E_2$, we have 
\[E_1 = \{1, \ldots, m-1\}, \qquad E_2 = \{m, \ldots, N\}.\]
From the definition of $E_2$, there exists $C_m$ such that
\begin{equation}\label{F-8}
\liminf_{t \to +\infty} |x_N(t) - x_m(t)| = C_m < +\infty.
\end{equation}
Next,  depending on the value of $\limsup_{t \to +\infty} |x_N(t) - x_m(t)|$, we will split the analysis into two cases. 
\newline

\noindent $\diamond$ Case $A$:~Suppose that $\limsup\limits_{t \to +\infty} |x_N(t) - x_m(t)| = C_m$. Combining this assumption and  \eqref{F-8}, we obtain that
\[\lim_{t \to +\infty} |x_N(t) - x_m(t)| = C_m.\]
Now for $i \in E_2$, i.e. $m < i < N$, we have $x_m < x_i < x_N$. Then we claim that 
\[\limsup_{t \to +\infty} |x_N(t) - x_i(t)| = \liminf_{t \to +\infty} |x_N(t) - x_i(t)|.\]
If not, there exists some $i_0 \in E_2$ and $x_m < x_{i_0} < x_N$ such that
\[\liminf_{t \to +\infty} |x_N(t) - x_{i_0}(t)| < \limsup_{t \to +\infty} |x_N(t) - x_{i_0}(t)| \le \limsup_{t \to +\infty} |x_N(t) - x_m(t) | = C_m. \]
Then, the uniform boundedness provides convergence subsequences. Therefore, we can find two time sequences $\{t_l^1\}_{l=1}^\infty$ and $\{t_l^2\}_{l=1}^\infty$ such that
\begin{equation}\label{F-9}
\begin{aligned}
&\lim_{l \to +\infty} t_l^1 = +\infty, \quad \lim_{l \to +\infty} |x_N(t_l^1) - x_{i_0}(t_l^1)| = \liminf_{t \to +\infty} |x_N(t) - x_{i_0}(t)|, \\
&\lim_{l \to +\infty} t_l^2 = +\infty, \quad \lim_{l \to +\infty} |x_N(t_l^2) - x_{i_0}(t_l^2)| = \limsup_{t \to +\infty} |x_N(t) - x_{i_0}(t)|.
\end{aligned}
\end{equation}
Since the sequence $\{|x_N(t) - x_m(t)|\}$  is convergent when $t \to \infty$, there exists a constant $M > 0$ such that for all $i, j \in E_2$
\[|x_i(t) - x_j(t)| \le |x_N(t) - x_m(t)| \le M, \quad t > 0.\] 
In particular, we have the estimates for $t_l^1$ and $t_l^2$, i.e.
\[|x_i(t_l^1) - x_j(t_l^1)| \le M, \quad |x_i(t_l^2) - x_j(t_l^2)| \le M, \quad i,j \in E_2.\]
Then, we can find two subsequences $\{t_{l_p}^1\}_{p=1}^\infty$ and $\{t_{l_p}^2\}_{p=1}^\infty$ of sequences $\{t_l^1\}_{l=1}^\infty$ and $\{t_l^2\}_{l=1}^\infty$, respectively such that
\begin{equation}\label{F-10}
\begin{cases}
\displaystyle \lim \limits_{p \to +\infty} |x_i(t^1_{l_p}) - x_j(t^1_{l_p})| < +\infty, \quad \lim \limits_{p \to +\infty} |x_i(t^2_{l_p}) - x_j(t^2_{l_p})| < +\infty \quad i, j \in E_2, \\
\displaystyle \lim \limits_{p \to +\infty} |x_i(t^1_{l_p}) - x_j(t^1_{l_p})| = +\infty, \quad \lim \limits_{p \to +\infty} |x_i(t^2_{l_p}) - x_j(t^2_{l_p})| = +\infty \quad i \in E_1, \  j \in E_2.
\end{cases}
\end{equation}
From system \eqref{B5}, the dynamics of the particles in $E_2$ will be governed by the following differential equations,
\[\dot{x}_i = \nu_i + \frac{\kappa}{N} \sum_{k=1}^{m-1} \Phi(x_k - x_i) + \frac{\kappa}{N} \underset{k \ne i}{\sum_{k =m}^{N}} \Phi(x_k -x_i) , \quad t > 0, \  i \in E_2.\]
Now, we let $p $ tends to infinity in two sequences $\{t_{l_p}^1\}_{p=1}^\infty$ and $\{t_{l_p}^2\}_{p=1}^\infty$ respectively. Then we  obtain from above differential equations that, for $i \in E_2$,
\begin{equation*}
\begin{cases}
\displaystyle  \lim_{t_{l_p}^1 \to + \infty} \dot{x}_i = \nu_i + \lim_{t_{l_p}^1 \to + \infty} \frac{\kappa}{N} \sum_{k=1}^{m-1} \Phi(x_k - x_i) + \lim_{t_{l_p}^1 \to + \infty} \frac{\kappa}{N} \underset{k \ne i}{\sum_{k =m}^{N}} \Phi(x_k -x_i), \\
\displaystyle  \lim \limits_{t_{l_p}^2 \to + \infty} \dot{x}_i = \nu_i + \lim \limits_{t_{l_p}^2 \to + \infty} \frac{\kappa}{N} \sum\limits_{k=1}^{m-1} \Phi(x_k - x_i) + \lim \limits_{t_{l_p}^2 \to + \infty} \frac{\kappa}{N} \underset{k \ne i}{\sum\limits_{k =m}^{N}} \Phi(x_k -x_i).
\end{cases}
\end{equation*}
Then according to \eqref{F-7}, \eqref{F-10}, the definition of $m$ and the above limits, we immediately obtain the following equalities,
\begin{equation}\label{F-11}
\begin{cases}
\displaystyle  \lim\limits_{t \to +\infty} v_N(t)  = \nu_i -  \frac{\kappa}{N} \sum\limits_{k=1}^{m-1} \Phi^\infty + \frac{\kappa}{N} \underset{k \ne i}{\sum\limits_{k =m}^{N}} \Phi(\lim \limits_{t_{l_p}^1 \to + \infty} (x_k -x_i)), \quad i \in E_2,\\
\displaystyle \lim\limits_{t \to +\infty} v_N(t) = \nu_i -  \frac{\kappa}{N} \sum\limits_{k=1}^{m-1} \Phi^\infty + \frac{\kappa}{N} \underset{k \ne i}{\sum\limits_{k =m}^{N}} \Phi(\lim \limits_{t_{l_p}^2 \to + \infty}  (x_k -x_i)), \quad i \in E_2.
\end{cases}
\end{equation}
Now, we set $y_i := x_i - x_N$. According to \eqref{F-10}, we conclude the limits of $y_i$ exists for the subsequences $ \{t_{l_p}^1\}_{p=1}^\infty$ and $ \{t_{l_p}^2\}_{p=1}^\infty$. Therefore,  
we  may denote $\lim\limits_{ p \to +\infty} y_i(t_{l_p}^1) = y^1_i(+\infty)$, $\lim\limits_{ p \to +\infty} y_i(t_{l_p}^2) = y^2_i(+\infty)$ and $\lim\limits_{t \to +\infty} v_N(t) = v_N^\infty$. Then, \eqref{F-11} can be rewritten as below,
\begin{equation}\label{F-12}
\begin{cases}
\displaystyle 0 = \nu_i -  v_N^\infty - \frac{\kappa}{N} \sum\limits_{k=1}^{m-1} \Phi^\infty + \frac{\kappa}{N} \underset{k \ne i}{\sum\limits_{k =m}^{N}} \Phi(y_k^1(+\infty) - y_i^1(+\infty)), \quad i \in E_2,\\
\displaystyle 0 = \nu_i -  v_N^\infty -  \frac{\kappa}{N} \sum\limits_{k=1}^{m-1} \Phi^\infty + \frac{\kappa}{N} \underset{k \ne i}{\sum\limits_{k =m}^{N}} \Phi(y_k^2(+\infty) - y_i^2(+\infty)), \quad i \in E_2.
\end{cases}
\end{equation}
Next, we shift $y_k^1(+\infty)$ and $y_k^2(+\infty)$ to define $\bar{y}_k^1(+\infty)$ and $\bar{y}_k^2(+\infty)$ for each $k \in E_2$ as below,
\[\bar{y}_k^1(+\infty) = y_k^1(+\infty) - \frac{1}{N- m+1} \sum_{i = m}^{N} y_i^1(+\infty), \quad \bar{y}_k^2(+\infty) = y_k^2(+\infty) - \frac{1}{N- m+1} \sum_{i = m}^{N} y_i^2(+\infty),\]
which both satisfy the zero sum condition $\sum_{i = m}^N \bar{y}_i^1(+\infty) = \sum_{i = m}^N \bar{y}_i^2(+\infty) = 0$.
Then, \eqref{F-12} can be rewritten as 
\begin{equation*}
\begin{cases}
\displaystyle 0 = \nu_i -  v_N^\infty - \frac{\kappa}{N} \sum\limits_{k=1}^{m-1} \Phi^\infty + \frac{\kappa}{N} \underset{k \ne i}{\sum\limits_{k =m}^{N}} \Phi(\bar{y}_k^1(+\infty) - \bar{y}_i^1(+\infty)), \quad i \in E_2,\\
\displaystyle 0 = \nu_i -  v_N^\infty -  \frac{\kappa}{N} \sum\limits_{k=1}^{m-1} \Phi^\infty + \frac{\kappa}{N} \underset{k \ne i}{\sum\limits_{k =m}^{N}} \Phi(\bar{y}_k^2(+\infty) - \bar{y}_i^2(+\infty)), \quad i \in E_2.
\end{cases}
\end{equation*}
From \eqref{F-9}, we have
\begin{equation*}
\begin{cases}
\displaystyle y_{i_0}^1(+\infty) = \lim_{t_{l_p}^1 \to +\infty}(x_{i_0} - x_N) = - \lim_{l \to +\infty} |x_N(t_l^1) - x_{i_0}(t_l^1)| = - \liminf_{t \to +\infty} |x_N(t) - x_{i_0}(t)|, \\
\displaystyle y_{i_0}^2(+\infty) = \lim_{t_{l_p}^2 \to +\infty}(x_{i_0} - x_N) = - \lim_{l \to +\infty} |x_N(t_l^2) - x_{i_0}(t_l^2)| = - \limsup_{t \to +\infty} |x_N(t) - x_{i_0}(t)|.
\end{cases}
\end{equation*}
It yields that $y_{i_0}^1(+\infty) \ne y_{i_0}^2(+\infty)$, hence, $\bar{y}_{i_0}^1(+\infty) \ne \bar{y}_{i_0}^2(+\infty)$. Therefore, we construct  two different solutions $\{\bar{y}_k^1(+\infty)\}_{k=m}^N$ and $\{\bar{y}_k^2(+\infty)\}_{k=m}^N$ to the same equation \eqref{F-12}, which is a contradiction to Lemma \ref{equilibrium}.
\newline

\noindent $\diamond$ Case $B$:~On the other hand, suppose that 
\[\limsup_{t \to +\infty} |x_N(t) - x_m(t)| > C_m.\]
That is,  the limit of $|x_N(t) - x_m(t)|$ when $t $ tends to infinity does not exist. Then we can find two time sequences $\{t_l^1\}_{l=1}^\infty$ and $\{t_l^2\}_{l=1}^\infty$ such that
\begin{equation*}
\begin{cases}
\displaystyle \lim_{l \to +\infty} t^1_l = +\infty, \quad \lim_{l \to +\infty} t^2_l = +\infty, \quad \lim_{l \to +\infty} |x_N(t^1_l)- x_m(t^1_l)| = \liminf_{t \to +\infty} |x_N(t) - x_m(t)|, \\
\displaystyle \lim_{l \to +\infty} |x_N(t^2_l)- x_m(t^2_l)|  = \min \left\{ \limsup_{t \to +\infty} |x_N(t) - x_m(t)| ,
\liminf_{t \to +\infty} |x_N(t) - x_m(t)| + 1\right\}.
\end{cases}
\end{equation*}
Then, using the same arguments as in Case A, we can find two subsequences $\{t_{l_p}^1\}_{p=1}^\infty$ and $\{t_{l_p}^2\}_{p=1}^\infty$ respectively such that for $i, j \in E_2$,
\begin{equation*}
\begin{cases}
\displaystyle \exists \ \lim_{ p \to +\infty} |x_i(t_{l_p}^1)- x_j(t_{l_p}^1)| < +\infty, \quad \exists \ \lim_{ p \to +\infty} |x_i(t_{l_p}^2)- x_j(t_{l_p}^2)| < +\infty, \quad i, j \in E_2,\\
\displaystyle \lim_{ p \to +\infty} |x_i(t_{l_p}^1)- x_j(t_{l_p}^1)|  = +\infty, \quad \lim_{ p \to +\infty} |x_i(t_{l_p}^2)- x_j(t_{l_p}^2)| = +\infty, \quad i\in E_1,\ j \in E_2.
\end{cases}
\end{equation*}
Then, by same analysis as in Case A, we can obtain two different solutions of the corresponding constrained equilibrium system, since at least $\bar{y}_m^1(+\infty) \ne \bar{y}_m^2(+\infty)$. This is a contradiction according to Lemma \ref{equilibrium}.

Now we combine the analysis in Case A and Case B to obtain the desired result.\newline
\end{proof}

\begin{corollary}\label{|x_i - x_j|}
Let $X$ be a solution of system \eqref{B5} with initial data $X^0$ and assume the components of $X^0$ are sorted as below
\[x_1^0 < x_2^0 < \ldots < x_N^0.\]
Then we can seperate $N$ particles as follows:
\begin{equation}\label{F-13}
\begin{cases}
\displaystyle F_1 = \{\ i\ : \ \exists \lim_{t \to +\infty} |x_N(t) - x_i(t)| < +\infty \}, \\
\displaystyle F_2 = \{\ i : \ \lim_{t \to +\infty} |x_N(t) - x_i(t)| = +\infty\}.
\end{cases}
\end{equation}
Moreover, we have
\begin{equation*}
\begin{cases}
\displaystyle \lim_{t \to +\infty} |x_i(t) - x_j(t)| < +\infty,  \quad i, j \in F_1,\\
\displaystyle \lim_{t \to +\infty} |x_i(t) - x_j(t) | = +\infty, \quad i \in F_1, \ j \in F_2.
\end{cases}
\end{equation*}
\end{corollary}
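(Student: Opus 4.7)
The proof of Corollary \ref{|x_i - x_j|} reduces essentially to Lemma \ref{|x_i - x_N|} once we invoke order preservation. Since $\beta>1$, Theorem \ref{TD1} rules out collisions, so the initial ordering $x_1^0<x_2^0<\cdots<x_N^0$ persists for all $t\ge 0$. In particular $x_N(t)-x_i(t)\ge 0$ for every $i$, so the absolute values in the definitions of $F_1$ and $F_2$ are redundant, and the finite/infinite limits inherited from Lemma \ref{|x_i - x_N|} apply to the signed quantities $x_N(t)-x_i(t)$ themselves.

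For the first claim, take $i,j\in F_1$ with (without loss of generality) $i\le j$. I would use the identity
$$x_j(t)-x_i(t)=\bigl(x_N(t)-x_i(t)\bigr)-\bigl(x_N(t)-x_j(t)\bigr).$$
By definition of $F_1$, each term on the right has a finite limit as $t\to+\infty$, so $|x_j(t)-x_i(t)|=x_j(t)-x_i(t)$ converges to a finite limit.

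For the second claim, let $i\in F_1$ and $j\in F_2$. I would first show that necessarily $j<i$. If instead $j>i$, then order preservation yields $x_N(t)-x_j(t)<x_N(t)-x_i(t)$, and the right-hand side has a finite limit; hence $x_N(t)-x_j(t)$ is bounded. The dichotomy of Lemma \ref{|x_i - x_N|} then forces $\lim_{t\to+\infty}|x_N(t)-x_j(t)|<\infty$, i.e.\ $j\in F_1$, a contradiction. Thus $j<i$, and the identity
$$x_i(t)-x_j(t)=\bigl(x_N(t)-x_j(t)\bigr)-\bigl(x_N(t)-x_i(t)\bigr)$$
expresses $|x_i(t)-x_j(t)|$ as the difference of a quantity tending to $+\infty$ (since $j\in F_2$) and one with a finite limit (since $i\in F_1$), so $|x_i(t)-x_j(t)|\to+\infty$.

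The heavy lifting here has already been done in Lemma \ref{|x_i - x_N|} via the uniqueness of the constrained equilibrium in Lemma \ref{equilibrium}; the only extra ingredient is the monotone ordering of particles guaranteed by collision avoidance, which allows distances between arbitrary pairs to be rewritten as signed differences of distances to the rightmost particle $x_N$. Consequently, I do not foresee any genuine obstacle in this step: the argument is a two-line algebraic manipulation plus a short contradiction using the dichotomy of Lemma \ref{|x_i - x_N|}.
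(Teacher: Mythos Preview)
Your proof is correct and follows essentially the same route as the paper: invoke order preservation from collision avoidance, then decompose $x_i-x_j$ as a difference of distances to $x_N$ and appeal to Lemma \ref{|x_i - x_N|}. The only cosmetic difference is that for the second claim the paper applies the reverse triangle inequality directly, whereas you first establish $j<i$ before using the signed decomposition; both arguments are equivalent and yield the same conclusion.
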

\begin{proof}
Under the assumption that $x_1^0 < \ldots < x_N^0$, it is known that components of $X$ are ordered, that is, $x_1(t) < x_2(t) < \cdots < x_N(t), t \ge 0.$
Then it follows from Lemma \ref{|x_i - x_N|} that \eqref{F-13} holds. Combing the definition of $F_1$ and the continuity of $X$, for $i, j \in F_1$,  we have
\[\exists \ \lim_{t \to +\infty} (x_N(t) - x_i(t)) < +\infty \quad \text{and} \quad \exists \ \lim_{t \to +\infty} (x_N(t) - x_j(t)) < +\infty.\]
Then, it is easy to see that the limit of $x_i - x_j$ exists for $i,j \in F_1$ since
\[\lim_{t \to +\infty} (x_i(t) - x_j(t)) = - \lim_{t \to +\infty} (x_N(t) - x_i(t)) + \lim_{t \to +\infty} (x_N(t) - x_j(t)), \quad i ,j \in F_1.\]
That is, $\exists \ \lim_{t \to +\infty} |x_i(t) - x_j(t)| < +\infty,  \ i, j \in F_1.$
Next, for $i \in F_1, j \in F_2$, by triangle inequality we have
\[|x_i(t) - x_j(t)| \ge |x_N(t) - x_j(t)| - |x_N(t) - x_i(t)|.\]
We take inferior limit on both sides of above inequality to obtain that
\[\liminf_{t \to +\infty} |x_i(t) - x_j(t)| \ge \lim_{t \to +\infty} |x_j(t) - x_N(t)| - \lim_{t \to +\infty} |x_N(t) - x_i(t)| = +\infty,\]
due to the fact that $\lim_{t \to +\infty} |x_j(t) - x_N(t)| = +\infty.$
Thus,  for $i \in F_1, j \in F_2$, we have \[\lim_{t \to +\infty} |x_i(t) - x_j(t)| = +\infty, \quad i \in F_1, \ j \in F_2.\]
\end{proof}

\noindent $\mathbf{Proof\ of\ Lemma\ \ref{non-oscillatory}.}$
From Lemma \ref{|x_i - x_N|} and Corollary \ref{|x_i - x_j|}, it is known that $m \in \mathcal{N}$ can be found such that
\[F_1 = \{m, \ldots, N\}, \qquad F_2 = \{1, \ldots, m-1\},\]
where $F_1$ and $F_2$ are defined in \eqref{F-13}.
Thus, we can repeat the same arguments in Lemma \ref{|x_i - x_N|} and Corollary \ref{|x_i - x_j|} inductively to seperate $\{x_1, x_2, \ldots, x_N\}$ into finite subsets:
\begin{equation*}
\begin{cases}
\displaystyle \{x_1, x_2,\ldots, x_N\} = \bigcup_{j=1}^q\mathcal{G}_j, \quad \mathcal{G}_j = \{x^j_1, \ldots, x^j_{n_j}\}, \quad j =1, \ldots, q, \\
\displaystyle x^1_1 = x_1, \ x^q_{n_q} = x_N, \ x^j_1 = x_{1+\sum_{i=1}^{j-1}n_i }, \\
\displaystyle \lim_{t \to +\infty} |x^j_k- x^j_l| < +\infty, \quad \lim_{t \to +\infty} |\dot{x}^j_k - \dot{x}^j_l| = 0, \\
\displaystyle \lim_{t \to +\infty} |x^i_k - x^j_l| = +\infty, \quad i \ne j. 
\end{cases}
\end{equation*}
\qed
\vspace{0.3cm}
\section{Proof of Lemma \ref{critical}}\label{p-critical}
\setcounter{equation}{0}
\vspace{0.5cm}
\noindent $\mathbf{Proof\ of\ Lemma\ \ref{critical}.}$
(1)~Suppose $\kappa > \kappa_c$, we claim that mono-cluster flocking will emerge asymptotically:
\[x_{ij}^\infty := \lim_{t \to +\infty} |x_i(t) - x_j(t)| < +\infty, \quad 1 \le i, j \le N.\]
If not, there exists some $l_0 \ (1 \le l_0 \le N-1)$ such that $\lim\limits_{t \to +\infty} (x_{l_0+1}(t) - x_{l_0}(t) ) = +\infty$. Combing the zero sum condition $\sum_{i=1}^N x_i(t) = 0$ and the order relation $x_1(t) < x_2(t) < \cdots < x_N(t), \ t> 0,$ we have 
\[\sum_{i=1}^{l_0} x_i(t) \le 0, \quad \sum_{i=l_0 + 1}^{N} x_i(t) \ge 0, \quad \lim_{ t\to +\infty} \sum_{i=1}^{l_0} x_i(t) = -\infty, \quad \lim_{ t\to +\infty} \sum_{i=l_0 + 1}^{N} x_i(t) = +\infty.\]
We next consider the derivative of $\sum_{i=1}^{l_0} x_i(t)$,
\begin{align}\label{F-14}
\begin{aligned}
\frac{d}{dt} \left(\sum_{i=1}^{l_0} x_i(t)\right)&= \sum_{i=1}^{l_0} [\nu_i + \frac{\kappa}{N} \underset{k \ne i}{\sum_{k=1}^N} \Phi(x_k-x_i)] = \sum_{i=1}^{l_0} \nu_i + \frac{\kappa}{N} \sum_{i=1}^{l_0} \sum_{k=l_0+1}^N \Phi(x_k-x_i) \\
& \ge \sum_{i=1}^{l_0} \nu_i + \frac{\kappa}{N} l_0 (N-l_0) \Phi(x_{l_0+1} - x_{l_0}).
\end{aligned}
\end{align}
It follows from $\kappa > \kappa_c $ and the definition of $\kappa_c$ that $\Phi^\infty  > - \frac{\frac{1}{l} \sum_{i=1}^{l} \nu_i}{\frac{N-l}{N} \kappa}$ for all $1 \le l \le N-1$. Then we set
\[C_0 = \max_{1 \le l \le N-1} \Phi^{-1} \left( - \frac{\frac{1}{l} \sum_{i=1}^{l} \nu_i}{\frac{N-l}{N} \kappa}\right),  \quad 1 \le l \le N-1,\]
where $\Phi^{-1} \left( - \frac{\frac{1}{l} \sum_{i=1}^{l} \nu_i}{\frac{N-l}{N} \kappa}\right) > 0$ due to the inverse $\Phi^{-1}$ is defined for $\Phi(x)$ when $x>0$. According to the definition of $l_0$,  for the positive constant $C_0$, there exists $t_0>0$ such that
\[x_{l_0+1}(t) - x_{l_0}(t) \ge C_0, \quad \text{for} \ t \ge t_0. \]
Then due to the monotone increasing property of $\Phi$ on $(0, + \infty)$ and the definition of $C_0$, we immediately obtain  that
\[\Phi(x_{l_0}(t) - x_{l_0}(t)) \ge \Phi(C_0) \ge - \frac{\frac{1}{l_0} \sum_{i=1}^{l_0} \nu_i}{\frac{N-l_0}{N} \kappa}, \quad t \ge t_0.\]
Thus, combing above inequality and \eqref{F-14}, we obtain that
\[\frac{d}{dt} (\sum_{i=1}^{l_0} x_i(t)) \ge 0, \qquad t \ge t_0.\]
We integrate on both sides of above formula to obtain that $\sum_{i=1}^{l_0} x_i(t) \ge \sum_{i=1}^{l_0} x_i(t_0)$, which contradicts to the fact that $\lim\limits_{ t\to +\infty} \sum_{i=1}^{l_0} x_i(t) = -\infty$.
\newline

\noindent $(2)$~Suppose that mono-cluster flocking emerges, i.e., there exists an  asymptotic relative equilibrium state $X^\infty$ such that $x_i^\infty :=\lim\limits_{t \to +\infty} x_i(t)$. Then, we will prove  $\kappa > \kappa_c$ by contradiction. Supppose $\kappa \le \kappa_c$. Then there exists $ 1 \le r_0 \le N-1$ such that
\begin{equation}\label{F-15}
\kappa \le - \frac{\frac{1}{r_0} \sum_{i=1}^{r_0} \nu_i}{\frac{N-r_0}{N} \Phi^\infty} \quad \text{i.e.} \quad \frac{\kappa}{N} r_0(N-r_0) \Phi^\infty + \sum_{i=1}^{r_0} \nu_i \le 0.
\end{equation}
Next we apply \eqref{F-15} and the fact that $\sum_{i=1}^{r_0} \underset{k \ne i } {\sum_{k =1}^{r_0}} \Phi(x_k - x_i) = 0$ to obtain the estimate of the derivative of $\sum_{i=1}^{r_0} x_i$ as below,
\begin{align}\label{F-16}
\begin{aligned}
\frac{d}{dt} \left(\sum_{i=1}^{r_0} x_i(t)\right) = \sum_{i=1}^{r_0} \nu_i +  \frac{\kappa}{N} \sum_{i=1}^{r_0} \sum_{k =r_0+1}^{N} \Phi(x_k - x_i) \le \sum_{i=1}^{r_0} \nu_i + \frac{\kappa}{N} r_0(N-r_0) \Phi^\infty \le 0, \quad t \ge 0 .
\end{aligned}
\end{align}
 Thus, it yields from \eqref{F-16} that
$\lim_{t \to +\infty} \sum_{i=1}^{r_0} x_i(t) = -\infty,$
which obviously contradicts to 
\[\lim_{t \to +\infty} \sum_{i=1}^{r_0} x_i(t) = \sum_{i=1}^{r_0} x_i^\infty > -\infty.\]
\newline
We combine all of above analysis to obtain the desired result.\newline
\qed
\vspace{0.3cm}
\section{Proof of Theorem \ref{T4.2}}\label{p-f-predictability}
\setcounter{equation}{0}
\vspace{0.3cm}
\noindent $\mathbf{Proof\ of\ Theorem \ \ref{T4.2}.}$
$(i)$~ We split the proof into two steps by induction, Step $A$ and Step $B$. In the initial step, we claim that the set $\mathcal{I}_1$ form a maximal cluster in the sense that it forms an asymptotic cluster. In the inductive step, we assume that for any $k \in \{1,2, \ldots, N_c(\kappa) -1\}$ and each $1 \le j \le k$, the set $\mathcal{I}_j$ forms a maximal cluster. Then we prove that the set $\mathcal{I}_{k+1}$ form a maximal cluster.
 
 Since $N_c(\kappa) = 1$ has been treated in Lemma \ref{critical}, we consider the case that $N_c(\kappa) \ge 2$.
 \newline
 
 \noindent $\bullet$ Step $A$ (Initial step):~In this step, we divide the step into two sub-steps, Step $A.1$ and Step $A.2$. In Step $A.1$, we claim that the set $\mathcal{I}_1$ form a cluster. From Definition \ref{D2.2}, it is known that
 \[\sup_{0 \le t < +\infty} \max_{k,l \in \mathcal{I}_1}  |x_k(t) - x_l(t)| < +\infty.    \]
 Then from Lemma \ref{non-oscillatory}, we obtain that the limit of the relative distance of particles in $\mathcal{I}_1$ exists, i.e.
 \[\exists \ \lim_{t \to +\infty} |x_k(t) - x_l(t)| < +\infty, \quad k,l \in \mathcal{I}_1.\]
 In Step $A.2$, we will prove that the set $\mathcal{I}_1$ form a maximal cluster.
\newline

\noindent $\diamond$ Step $A.1$:~If $n_1 = 1$, we are done. It is assumed that $n_1 \ge 2$. Suppose that $\mathcal{I}_1$ constructed by the clustering algorithm \eqref{f-algorithm} can not form a cluster. Then there exists $1 \le r \le n_1 -1$ such that
\begin{equation}\label{F-17}
\lim_{t \to +\infty} (x_{r+1}(t) -x_r(t)) = +\infty.
\end{equation}
 It follows from the algorithm \eqref{f-algorithm} that
 \begin{equation}\label{F-18}
 \frac{1}{k} \sum_{j=1}^k \hat{\nu}_j^{(0,n_1]} + \kappa \frac{n_1 - k}{N} \Phi^\infty > 0, \quad 0 < k \le n_1 -1.
 \end{equation}
 Then, the dynamics of the center of  the first $n_1$ particles is governed by the following differential inequality,
 \begin{align}\label{F-20}
 \begin{aligned}
\frac{d}{dt} (\bar{x}^{(0,n_1]}(t)) &= \frac{1}{n_1} \sum_{i=1}^{n_1} [\nu_i + \frac{\kappa}{N} \underset{k \ne i}{\sum_{k=1}^N} \Phi(x_k - x_i)]\\
& = \bar{\nu}^{(0, n_1]} + \frac{1}{n_1}\frac{\kappa}{N} \sum_{i=1}^{n_1} \sum_{k=n_1+1}^{N} \Phi(x_k - x_i)\le \bar{\nu}^{(0, n_1]} + \kappa \frac{N-n_1}{N} \Phi^\infty := v_1^\infty.
 \end{aligned}
 \end{align}
 That is, the center of mass of first $n_1$ particles stay less than or equal to the linear growth with slope $v_1^\infty$:
 \[\bar{x}^{(0,n_1]}(t) \le \bar{x}^{(0,n_1]}(0) + v_1^\infty t.\]
 We next consider the dynamics of the quantity $\bar{x}^{(0,r]}$, which is governed by the following differential equation,
 \begin{align}\label{F-21}
 \begin{aligned}
 \frac{d}{dt} \bar{x}^{(0,r]} &= \frac{1}{r} \sum_{i=1}^r [\nu_i + \frac{\kappa}{N} \underset{k \ne i}{\sum_{k=1}^N} \Phi(x_k - x_i)] \\
 &= \frac{1}{r} \sum_{i=1}^r [\hat{\nu}_i^{(0, n_1]} + \bar{\nu}^{(0, n_1]} + \frac{\kappa}{N} \sum_{k=r+1}^{n_1} \Phi(x_k - x_i) + \frac{\kappa}{N} \sum_{k=n_1+1}^{N} \Phi(x_k - x_i)] \\
 &= \left( \frac{1}{r} \sum_{i=1}^r \hat{\nu}_i^{(0, n_1]} + \frac{1}{r} \frac{\kappa}{N} \sum_{i=1}^r \sum_{k=r+1}^{n_1} \Phi(x_k - x_i) \right) + \left( \bar{\nu}^{(0, n_1]} + \frac{1}{r} \frac{\kappa}{N} \sum_{i=1}^r \sum_{k=n_1+1}^{N} \Phi(x_k - x_i) \right) \\
 &=: \mathcal{K}_{11} + \mathcal{K}_{12}. 
 \end{aligned}
 \end{align}

 \noindent $\star$ (Estimate of $\mathcal{K}_{11}$):~Since $1 \le r \le n_1-1$, from \eqref{F-17} and \eqref{F-18}, we have
 \[\lim_{t \to +\infty} \left( \frac{1}{r} \sum_{i=1}^r \hat{\nu}_i^{(0, n_1]} + \frac{1}{r} \frac{\kappa}{N} \sum_{i=1}^r \sum_{k=r+1}^{n_1} \Phi(x_k - x_i) \right) = \frac{1}{r} \sum_{i=1}^r \hat{\nu}_i^{(0, n_1]} + \frac{\kappa}{N} (n_1 - r) \Phi^\infty > 0.\]
 Thus, for a given small $\varepsilon > 0$, there exists $t_\varepsilon > 0$ such that
 \begin{equation}\label{F-22}
 \frac{1}{r} \sum_{i=1}^r \hat{\nu}_i^{(0, n_1]} + \frac{1}{r} \frac{\kappa}{N} \sum_{i=1}^r \sum_{k=r+1}^{n_1} \Phi(x_k - x_i) > \varepsilon, \quad t \ge t_{\varepsilon}.
 \end{equation}
 
 \noindent $\star$ (Estimate of $\mathcal{K}_{12}$) :~From \eqref{F-17} and \eqref{F-20}, it is obvious to see that
 \[\lim_{t \to +\infty} \left( \bar{\nu}^{(0, n_1]} + \frac{1}{r} \frac{\kappa}{N} \sum_{i=1}^r \sum_{k=n_1+1}^{N} \Phi(x_k - x_i) \right) = \bar{\nu}^{(0, n_1]} + \frac{\kappa}{N}(N - n_1) \Phi^\infty = v_1^\infty.\]
 Then for the above given $\varepsilon > 0$, there exists $\tilde{t}_\varepsilon$ such that
 \begin{equation}\label{F-23}
  \bar{\nu}^{(0, n_1]} + \frac{1}{r} \frac{\kappa}{N} \sum_{i=1}^r \sum_{k=n_1+1}^{N} \Phi(x_k - x_i) > v_1^\infty - \frac{\varepsilon}{2}, \quad t \ge \tilde{t}_\varepsilon.
  \end{equation}
Thus, we combine \eqref{F-21}, \eqref{F-22}, and\eqref{F-23} to obtain that
\[\frac{d}{dt} \bar{x}^{(0,r]} \ge \varepsilon + v_1^\infty - \frac{\varepsilon}{2} = v_1^\infty + \frac{\varepsilon}{2}> v_1^\infty \ge \frac{d}{dt} \bar{x}^{(0,n_1]}, \quad t \ge \max \{t_\varepsilon, \tilde{t}_\varepsilon\}.\]
Then a finite time $t_c > \max \{t_\varepsilon, \tilde{t}_\varepsilon\} > 0$ can be found such that
\begin{equation}\label{F-24}
\bar{x}^{(0,r]}(t) > \bar{x}^{(0,n_1]}(t), \quad \text{for} \  t \ge t_c.
\end{equation}
However, the order relation of particles yields that
\[\bar{x}^{(0,r]}(t) < \bar{x}^{(0,n_1]}(t) , \quad t \ge 0,\]
which obviously contradicts to \eqref{F-24}. Thus, the particles in $\mathcal{I}_1$ will not depart from each other.
\newline

\noindent $\diamond$ Step $A.2$:~We next show that particles in $\mathcal{I}_1$ departs from the other particles. If not, suppose that there is a larger set $\tilde{\mathcal{I}}_1$ such that the particles in $\tilde{\mathcal{I}}_1$ make a maximal cluster and $\mathcal{I}_1 \subsetneq \tilde{\mathcal{I}}_1$. Due to the order relation in $X$ and $N_c(\kappa) \ge 2$, there exists $n_1 < n_* < N$ such that
\[\tilde{\mathcal{I}}_1 := \{1, \ldots, n_*\}.\]
Since $\tilde{\mathcal{I}}_1$ is a maximal cluster, from Definition \ref{D2.2}, we have
\begin{equation}\label{F-25}
\lim_{t \to +\infty} (x_{n_* + 1}(t) - x_{n_*}(t)) = +\infty. 
\end{equation}
Then, the dynamics of center of mass for the group $\tilde{\mathcal{I}}_1$ satisfies the following differential inequality,
\begin{align}\label{F-26}
\begin{aligned}
\frac{d}{dt} \bar{x}^{(0,n_*]}&= \frac{1}{n_*} \sum_{i = 1}^{n_*} [\nu_i + \frac{\kappa}{N} \underset{k \ne i}{\sum_{k=1}^N} \Phi(x_k - x_i)] = \frac{1}{n_*} \sum_{i = 1}^{n_*} [\nu_i + \frac{\kappa}{N} (\underset{k \ne i}{\sum_{k=1}^{n_*}} \Phi(x_k - x_i) + \sum_{k = n_* + 1}^N \Phi(x_k- x_i))] \\
&= \bar{\nu}^{(0, n_*]} + \frac{1}{n_*} \frac{\kappa}{N} \sum_{i = 1}^{n_*} \sum_{k = n_* + 1}^N \Phi(x_k - x_i) \le \bar{\nu}^{(0, n_*]} + \frac{\kappa(N- n_*)}{N} \Phi^\infty := \tilde{v}^\infty_1.
\end{aligned}
\end{align}
Thus, for $1 \le l < n_*$, it follows from the order relation in $X$ and \eqref{F-26} that
\[\bar{x}^{(0,l]}(t) < \bar{x}^{(0, n_*]}(t) \le \bar{x}^{(0, n_*]}(0) + \tilde{v}^\infty_1 t, \quad 1 \le l < n_*, \ t \ge 0.\]
Then we study the dynamics of the quantity $\bar{x}^{(0,l]} - \tilde{v}^\infty_1 t$ through the differential equation below,
\begin{equation}\label{F-27}
\begin{aligned}
&\frac{d}{dt} (\bar{x}^{(0, l]} - \tilde{v}^\infty_1 t) \\
 &= \frac{1}{l} \sum_{i=1}^l [\nu_i + \frac{\kappa}{N} (\underset{k \ne i}{\sum_{k=1}^{l}} \Phi(x_k - x_i) + \sum_{k= l+1}^{n_*} \Phi(x_k -x_i) + \sum_{k =n_* + 1}^{N} \Phi(x_k- x_i))]  - \tilde{v}^\infty_1 \\
&= \frac{1}{l} \sum_{i=1}^l (\hat{\nu}_i^{(0, n_*]} + \bar{\nu}^{(0, n_*]}) + \frac{1}{l} \frac{\kappa}{N} \sum_{i=1}^l \sum_{k= l+1}^{n_*} \Phi(x_k -x_i) + \frac{1}{l} \frac{\kappa}{N} \sum_{i=1}^l \sum_{k =n_* + 1}^{N} \Phi(x_k- x_i) - \tilde{v}^\infty_1 \\
&= \left( \frac{1}{l} \sum_{i=1}^l \hat{\nu}_i^{(0, n_*]} + \frac{1}{l} \frac{\kappa}{N} \sum_{i=1}^l \sum_{k= l+1}^{n_*} \Phi(x_k -x_i) \right) + \left( \frac{1}{l} \frac{\kappa}{N} \sum_{i=1}^l \sum_{k =n_* + 1}^{N} \Phi(x_k- x_i) - \frac{\kappa(N-n_*)}{N} \Phi^\infty \right)\\
&=: \mathcal{K}_{21} + \mathcal{K}_{22}.
\end{aligned}
\end{equation}
Since $\tilde{\mathcal{I}}_1$ is a maximal cluster, it follows from Definition \ref{D2.2} that there exists a sufficiently small constant $\varepsilon > 0$ such that
\begin{equation}\label{F-28}
\lim_{t \to +\infty} \frac{1}{l} \frac{\kappa}{N} \sum_{i=1}^l \sum_{k= l+1}^{n_*} \Phi(x_k -x_i) + \varepsilon < \frac{\kappa(n_* - l)}{N} \Phi^\infty .
\end{equation}
Since the velocity of the particles in the same cluster will align, from \eqref{F-26}, we note that all particles in group $\tilde{\mathcal{I}}_1$ will align asymptotically at velocity $\tilde{v}^\infty_1$. Therefore, for the above given small $\varepsilon > 0$, there exists a time $t_*^1$ such that
\begin{equation}\label{F-29}
\left| \frac{d}{dt} (\bar{x}^{(0, l]} - \tilde{v}^\infty_1 t) \right| \le \frac{\varepsilon}{2} , \quad t \ge t_*^1.
\end{equation}
Since the particles in different groups will segregate, we have
\[\lim_{t \to +\infty} |x_i(t) - x_k(t)| = +\infty, \quad 1 \le i \le l, \ n_* + 1 \le k \le N.\]
Thus, we get
\[\lim_{t \to +\infty} \frac{1}{l} \frac{\kappa}{N} \sum_{i=1}^l \sum_{k =n_* + 1}^{N} \Phi(x_k- x_i) = \frac{\kappa(N-n_*)}{N} \Phi^\infty.\]
Hence, for the above given small $\varepsilon > 0$, there exists a time $t_*^2$ such that
\begin{equation}\label{F-30}
\left|\mathcal{K}_{22} \right| = \left| \frac{1}{l} \frac{\kappa}{N} \sum_{i=1}^l \sum_{k =n_* + 1}^{N} \Phi(x_k- x_i) - \frac{\kappa(N-n_*)}{N} \Phi^\infty \right| \le \frac{\varepsilon}{2}, \quad t \ge t_*^2.
\end{equation}
Then, by triangle inequality, we combine \eqref{F-29} and \eqref{F-30} to obtain that
\begin{equation}\label{G-1}
 |\mathcal{K}_{21}| \le \left| \frac{d}{dt} (\bar{x}^{(0, l]} - \tilde{v}^\infty_1 t) \right| + \left|\mathcal{K}_{22} \right| \le \varepsilon, \quad t \ge \max \{t_*^1, t_*^2\}.
 \end{equation}
 Thus, we combine \eqref{F-28} and \eqref{G-1} to obtain that for $1 \le l \le n_* - 1$,
 \[\frac{1}{l} \sum_{i=1}^l \hat{\nu}_i^{(0, n_*]} \ge \lim_{
 t \to +\infty} - \frac{1}{l} \frac{\kappa}{N} \sum_{i=1}^l \sum_{k= l+1}^{n_*} \Phi(x_k -x_i) - \varepsilon > - \frac{\kappa(n_* - l)}{N} \Phi^\infty.\]
 That is, 
 \[\frac{1}{l} \sum_{i=1}^l \hat{\nu}_i^{(0, n_*]} + \frac{\kappa(n_* - l)}{N} \Phi^\infty >0, \quad \text{for} \  1 \le l \le n_* -1.\]
However, it follows from the algorithm $\eqref{f-algorithm}_3$ that $n_* \le n_1$ which contradicts to $n_* > n_1$. Therefore, $\mathcal{I}_1$ is a maximal cluster, and the other particles in $\{n_1 + 1, \ldots, N\} $ will depart from the group $\mathcal{I}_1$.
\newline

\noindent $\bullet$ Step $B$: In this step, we will show that each group $\mathcal{I}_j$ is a maximal cluster by induction. In Step $A$, we have proved that the group $\mathcal{I}_1$ is a maximal cluster. Now suppose that for $1 \le i \le N_c(\kappa) -1$, sub-ensembles $\mathcal{I}_j(1 \le j \le i)$ are maximal clusters, then we prove that $\mathcal{I}_{i+1}$ is a maximal cluster. We show this by contradiction. Suppose the particles in $\mathcal{I}_{i+1}$ can not form a cluster. Then, there exists $n_i < r \le n_{i+1} - 1$ such that
\begin{equation}\label{G-2}
\lim_{t \to +\infty} (x_{r+1}(t) - x_r(t)) = +\infty.
\end{equation}
We set
\begin{equation}\label{G-3}
v_{i+1}^\infty := \bar{\nu}^{(n_i, n_{i+1}]} + \frac{\kappa(N- n_{i+1} - n_i) \Phi^\infty}{N}.
\end{equation}
Then the center of mass of particles in $\mathcal{I}_{i+1}$ satisfies
\begin{align*}
\begin{aligned}
&\frac{d}{dt} \bar{x}^{(n_i, n_{i+1}]}\\
&= \frac{1}{n_{i+1} -n_i} \sum_{k = n_i + 1}^{n_{i+1}} [ \nu_k + \frac{\kappa}{N} \underset{j \ne k}{\sum_{j=1}^N} \Phi(x_j - x_k)] \\
&= \bar{\nu}^{(n_i, n_{i+1}]} + \frac{\kappa}{N}  \frac{1}{n_{i+1} -n_i} \sum_{k = n_i + 1}^{n_{i+1}} \sum_{j = n_{i+1} +1}^N \Phi(x_j - x_k) + \frac{\kappa}{N} \frac{1}{n_{i+1} -n_i} \sum_{k = n_i + 1}^{n_{i+1}}  \sum_{j = 1}^{n_i} \Phi(x_j - x_k) \\
&= \bar{\nu}^{(n_i, n_{i+1}]} + \mathcal{R}_{11} + \mathcal{R}_{12}.
\end{aligned}
\end{align*}
Due to the well-ordering of particles, we have
\begin{equation}\label{G-4}
\mathcal{R}_{11} = \frac{1}{n_{i+1} -n_i} \sum_{k = n_i + 1}^{n_{i+1}}  \sum_{j = n_{i+1}+1}^{N} \Phi(x_j - x_k) \le \frac{\kappa(N- n_{i+1})}{N} \Phi^\infty .
\end{equation}
According the assumption of induction, it is known that all the particles with index $j \le n_i$ will depart from $\mathcal{I}_{i+1}$.
Then, we obtain that
\[\lim_{t \to +\infty} \mathcal{R}_{12}=\lim_{t \to +\infty} \frac{\kappa}{N} \frac{1}{n_{i+1} -n_i} \sum_{k = n_i + 1}^{n_{i+1}}  \sum_{j = 1}^{n_i} \Phi(x_j - x_k) = -\frac{\kappa n_i}{N} \Phi^\infty.\]
Thus, for a given small constant $\varepsilon > 0$, there exists a time $t_0$ such that
\begin{equation}\label{G-5}
\mathcal{R}_{12} = \frac{\kappa}{N} \frac{1}{n_{i+1} -n_i} \sum_{k = n_i + 1}^{n_{i+1}}  \sum_{j = 1}^{n_i} \Phi(x_j - x_k) \le -\frac{\kappa n_i}{N} \Phi^\infty + \varepsilon, \quad t \ge t_0.
\end{equation}
We combine \eqref{G-4} and \eqref{G-5} to obtain that
\begin{equation}\label{G-6}
\frac{d}{dt} \bar{x}^{(n_i, n_{i+1}]} \le \bar{\nu}^{(n_i, n_{i+1}]} + \frac{\kappa(N- n_{i+1} - n_i)}{N} \Phi^\infty + \varepsilon = v_{i+1}^\infty + \varepsilon, \quad \text{for} \ t \ge t_0.
\end{equation}
Thus, the center of mass of particles in $\mathcal{I}_{i+1}$ will stay less than or equal to the linear growth with a slope $v_{i+1}^\infty + \varepsilon$,
\[\bar{x}^{(n_i, n_{i+1}]}(t) \le \bar{x}^{(n_i, n_{i+1}]}(0) + (v_{i+1}^\infty + \varepsilon) (t- t_0), \quad t \ge t_0.\]
We next take the time derivative on $\bar{x}^{(n_i, r]} - (v_{i+1}^\infty + \varepsilon)t$, 
\begin{align*}
\begin{aligned}
&\frac{d}{dt} (\bar{x}^{(n_i, r]} - (v_{i+1}^\infty + \varepsilon)t) \\
&= \frac{1}{r-n_i} \sum_{k= n_i+1}^r [\nu_k + \frac{\kappa}{N} \underset{l \ne k}{\sum_{l=1}^N} \Phi(x_l -x_k)] - (v_{i+1}^\infty + \varepsilon) \\
&= \frac{1}{r-n_i} \sum_{k= n_i+1}^r ( \hat{\nu}_k^{(n_i, n_{i+1}]} + \bar{\nu}^{(n_i, n_{i+1}]})  - (v_{i+1}^\infty + \varepsilon)  + \frac{1}{r-n_i} \frac{\kappa}{N} \sum_{k= n_i+1}^r \sum_{l = r+1}^{n_{i+1}} \Phi(x_l-x_k) \\
&\hspace{0.5cm} + \frac{1}{r-n_i} \frac{\kappa}{N} \sum_{k= n_i+1}^r \left( \sum_{l=1}^{n_i} \Phi(x_l - x_k) + \sum_{l = n_{i+1} + 1}^N \Phi(x_l - x_k)\right) \\
&= \bar{\nu}^{(n_i, n_{i+1}]} - v_{i+1}^\infty  + \frac{1}{r-n_i} \sum_{k= n_i+1}^r \hat{\nu}_k^{(n_i, n_{i+1}]} + \frac{1}{r-n_i} \frac{\kappa}{N} \sum_{k= n_i+1}^r \sum_{l = r+1}^{n_{i+1}} \Phi(x_l-x_k) \\
&\hspace{0.5cm} + \frac{1}{r-n_i} \frac{\kappa}{N} \sum_{k= n_i+1}^r \left( \sum_{l=1}^{n_i} \Phi(x_l - x_k) + \sum_{l = n_{i+1} + 1}^N \Phi(x_l - x_k)\right) - \varepsilon \\
&= \mathcal{R}_{21} + \mathcal{R}_{22} + \mathcal{R}_{23} - \varepsilon.
\end{aligned}
\end{align*}

\noindent $\star$ (Estimate of $\mathcal{R}_{21}$):~
From \eqref{G-3}, it is easy to see that
\begin{equation}\label{G-7}
\mathcal{R}_{21} = \bar{\nu}^{(n_i, n_{i+1}]} - v_{i+1}^\infty = - \frac{\kappa(N- n_{i+1} - n_i)}{N} \Phi^\infty.
\end{equation}

\noindent $\star$ (Estimate of $\mathcal{R}_{22}$):~ It follows from \eqref{G-2} and the algorithm \eqref{f-algorithm} that
\begin{align*}
\begin{aligned}
&\lim_{t \to +\infty} \mathcal{R}_{22}\\
 &= \lim_{t \to +\infty`} \left( \frac{1}{r-n_i} \sum_{k= n_i+1}^r \hat{\nu}_k^{(n_i, n_{i+1}]}  + \frac{1}{r-n_i} \frac{\kappa}{N} \sum_{k= n_i+1}^r \sum_{l = r+1}^{n_{i+1}} \Phi(x_l-x_k) \right) \\
&= \frac{1}{r-n_i} \sum_{k= n_i+1}^r \hat{\nu}_k^{(n_i, n_{i+1}]} + \frac{\kappa(n_{i+1} - r)}{N} \Phi^\infty >0.
\end{aligned}
\end{align*}
Then for a given small $\varepsilon > 0$,  there exists a time $t_1>0$ such that
\begin{equation}\label{G-8}
\mathcal{R}_{22} = \frac{1}{r-n_i} \sum_{k= n_i+1}^r \hat{\nu}_k^{(n_i, n_{i+1}]}  + \frac{1}{r-n_i} \frac{\kappa}{N} \sum_{k= n_i+1}^r \sum_{l = r+1}^{n_{i+1}} \Phi(x_l-x_k) \ge 3\varepsilon, \quad t\ge t_1.
\end{equation}
\newline
\noindent $\star$ (Estimate of $\mathcal{R}_{23}$):~Due to the assumption of induction and from \eqref{G-2}, we have
\begin{align*}
\begin{aligned}
&\lim_{t \to +\infty} \mathcal{R}_{23}\\
 &= \lim_{t \to +\infty} \frac{1}{r-n_i} \frac{\kappa}{N} \sum_{k= n_i+1}^r \left( \sum_{l=1}^{n_i} \Phi(x_l - x_k) + \sum_{l = n_{i+1} + 1}^N \Phi(x_l - x_k)\right) \\
&= \frac{1}{r-n_i} \frac{\kappa}{N} \sum_{k= n_i+1}^r \left( -n_i \Phi^\infty + (N- n_{i+1}) \Phi^\infty\right) = \frac{\kappa(N- n_{i+1} - n_i)}{N} \Phi^\infty.
\end{aligned}
\end{align*}
Thus, for a sufficiently small $\varepsilon > 0$, there exists a time $t_2 > 0$ such that
\begin{equation}\label{G-9}
\mathcal{R}_{23} \ge \frac{\kappa(N- n_{i+1} - n_i)}{N} \Phi^\infty - \varepsilon, \quad t \ge t_2.
\end{equation}
Therefore, we combine \eqref{G-7}, \eqref{G-8}, and \eqref{G-9} to obtain that
\begin{equation}\label{G-10}
\frac{d}{dt} (\bar{x}^{(n_i, r]} - (v_{i+1}^\infty + \varepsilon)t) \ge 3\varepsilon - \varepsilon -\varepsilon = \varepsilon >0, \quad t \ge  \max\{t_1, t_2\}.
\end{equation}
It follows from \eqref{G-6} and \eqref{G-10} that
\[ \frac{d}{dt} \bar{x}^{(n_i, r]}  > \frac{d}{dt} \bar{x}^{(n_i, n_{i+1}]}, \quad t \ge \max \{t_0, t_1, t_2\}.\]
Then, there exists a time $t_c > \max \{t_0, t_1, t_2\}$ such that
\begin{equation}\label{G-11}
\bar{x}^{(n_i, r]}(t) > \bar{x}^{(n_i, n_{i+1}]}(t), \quad \text{for} \ t \ge t_c.
\end{equation}
However, the well-ordering of particles implies that
\[\bar{x}^{(n_i, r]}(t) \le \bar{x}^{(n_i, n_{i+1}]}(t), \quad t \ge 0,\]
which gives a contradiction to \eqref{G-11}. Thus, the particles in $\mathcal{I}_{i+1}$ form a cluster, that is, do not depart from each other. Then we can use the same arguments as in Step $A$ to show that other particles depart from the particles in $\mathcal{I}_{i+1}$. Thus, by the method of induction, we conclude that we exactly have an asymptotic $N_c(\kappa)$-cluster formation and that each ensemble $\mathcal{I}_i$ exhibits a maximal cluster.
\newline

\noindent $(ii)$~ From $(i)$, it is known that each ensemble $\mathcal{I}_i$ is a maximal cluster. Then, for $k \in \mathcal{I}_i$ and $l \not\in \mathcal{I}_i$, we have
\begin{equation}\label{G-12}
\lim_{t \to +\infty} |x_k(t) - x_l(t)| = +\infty.
\end{equation}
We consider the velocity of center of mass for the group $\mathcal{I}_i$,
\begin{align*}
\begin{aligned}
&\frac{d}{dt} \bar{x}^{(n_{i-1}, n_{i}]}\\
 &= \frac{1}{n_i - n_{i-1}} \sum_{k = n_{i-1} +1}^{n_i} [\nu_k + \frac{\kappa}{N}
\underset{l \ne k}{\sum_{l=1}^N} \Phi(x_l - x_k)] \\
&= \bar{\nu}^{(n_{i-1}, n_i]} + \frac{1}{n_i - n_{i-1}} \frac{\kappa}{N} \sum_{k = n_{i-1} +1}^{n_i} \sum_{l =1 }^{n_{i-1}} \Phi(x_l-x_k) + \frac{1}{n_i - n_{i-1}} \frac{\kappa}{N} \sum_{k = n_{i-1} +1}^{n_i}  \sum_{l = n_i +1}^N \Phi(x_l -x_k).
\end{aligned}
\end{align*}
Thus, it yields from \eqref{G-12} that
\begin{align*}
\begin{aligned}
&\lim_{t \to +\infty} \frac{d}{dt} \bar{x}^{(n_{i-1}, n_{i}]} \\
&= \bar{\nu}^{(n_{i-1}, n_i]} - \frac{1}{n_i - n_{i-1}} \frac{\kappa}{N} (n_i - n_{i-1})n_{i-1} \Phi^\infty  +  \frac{1}{n_i - n_{i-1}} \frac{\kappa}{N}(n_i - n_{i-1})(N- n_i) \Phi^\infty \\
&= \bar{\nu}^{(n_{i-1}, n_i]} + \frac{\kappa(N - n_i - n_{i-1}) \Phi^\infty}{N}.
\end{aligned}
\end{align*}
Therefore the asymptotic group velocity $v_i^\infty$ of cluster $\mathcal{I}_i$ is given by the following explicit formula:
\[v_i^\infty := \bar{\nu}^{(n_{i-1}, n_i]} + \frac{\kappa(N - n_i - n_{i-1}) }{N} \Phi^\infty, \quad 1 \le i \le N_c(\kappa).\]
\newline
\noindent $(iii)$~Suppose tha we have $p$-cluster flocking configuration $\bigsqcup_{i =1}^p \mathcal{J}_i$, we next show that
\begin{equation}\label{G-13}
p = N_c(\kappa) \quad \text{and} \quad \mathcal{J}_i = \mathcal{I}_i, \ i = 1,2,\ldots,p.
\end{equation}
We consider the group $\mathcal{I}_1$. From Step $A.1$, we know that the particles in $\mathcal{I}_1$ will not depart from each other. Then, we have $\mathcal{I}_1 \subset \mathcal{J}_1$. Whereas according to the assumption that $\mathcal{J}_1$ is maximal, from Step $A.2$ we know that $\mathcal{J}_1$ must satisfy the algorithm \eqref{f-algorithm}. Thus, we have $\mathcal{J}_1 \subset \mathcal{I}_1$. Then, we obtain $\mathcal{J}_1 = \mathcal{I}_1$. Combining the arguments in Step $B$, we can obtain \eqref{G-13} by induction.
  \qed
 \vspace{0.3cm}

 \end{appendix}

\end{document}